\documentclass[12pt]{amsart}
\usepackage{SSdefn}
\usepackage{comment}

\newcommand{\gen}{{\rm gen}}

\newcommand{\fgen}{{\rm fg}}
\newcommand{\dfg}{{\rm dfg}}

\newcommand{\ex}{\mathrm{ex}}

\newcommand{\FI}{\mathbf{FI}}
\newcommand{\FS}{\mathbf{FS}}

\author{Steven V Sam}
\address{Department of Mathematics, University of Wisconsin, Madison, WI}
\email{\href{mailto:svs@math.wisc.edu}{svs@math.wisc.edu}}
\urladdr{\url{http://math.wisc.edu/~svs/}}
\thanks{SS was supported by a Miller research fellowship and NSF grant DMS-1500069.}

\author{Andrew Snowden}
\address{Department of Mathematics, University of Michigan, Ann Arbor, MI}
\email{\href{mailto:asnowden@umich.edu}{asnowden@umich.edu}}
\urladdr{\url{http://www-personal.umich.edu/~asnowden/}}
\thanks{AS was supported by NSF grants DMS-1303082 and DMS-1453893 and a Sloan Fellowship.}

\subjclass[2010]{%
05E05, %Symmetric functions and generalizations
13A50.%Actions of groups on commutative rings; invariant theory
}

\title{Hilbert series for twisted commutative algebras}

\date{November 1, 2017}

\begin{document}

\maketitle

\begin{abstract}
Suppose that for each $n \ge 0$ we have a representation $M_n$ of the symmetric group $S_n$. Such sequences arise in a wide variety of contexts, and often exhibit uniformity in some way. We prove a number of general results along these lines in this paper: our prototypical theorem states that if $\{M_n\}$ can be given a suitable module structure over a twisted commutative algebra then the sequence $\{M_n\}$ follows a predictable pattern. We phrase these results precisely in the language of Hilbert series (or Poincar\'e series, or formal characters) of modules over tca's.
\end{abstract}

\tableofcontents

\section{Introduction}

Suppose that for each $n \ge 0$ we have a complex representation $M_n$ of the symmetric group $S_n$. Such sequences of symmetric group representations arise in a variety of contexts, and naturally occurring examples tend to exhibit some kind of uniformity. For example:
\begin{enumerate}
\item Fix a manifold $X$ of dimension $\ge 2$ and a non-negative integer $i$. Let $M_n$ be the $i$th cohomology group of the configuration space of $n$ labeled points on $X$. This example was studied in \cite{fimodule}. One of the theorems in  loc.\ cit.\ states that, under mild assumptions on $X$, there is a single character polynomial that gives the character of $M_n$ for all sufficiently large $n$.
\item Fix $p$ and $r$. Let $M_n$ be the space of $p$-syzygies of the Segre embedding $(\bP^r)^n \to \bP^{rn+n-1}$. This case was studied in \cite{delta}. One of the theorems in loc.\ cit.\ states that the generating function of the sequence $\{\dim(M_n)\}$ is rational.
\item Suppose that $N$ is a functor associating to each finite dimensional vector space $V$ a module $N(V)$ over the ring $\Sym(\bC^d \otimes V)$, for some fixed $d$ (and that satisfies some technical conditions). For example, one could take $N(V)$ to be the coordinate ring of the $r$th determinantal variety for some $0 \le r \le d$. Now let $M_n$ be the Schur--Weyl dual of the degree $n$ piece of $N$. This example is studied in \cite{delta} and, in much greater detail, \cite{symu1}. Again, it is known that the generating function of the dimension sequence is rational.
\item Let $E$ be a representation of a reductive group $G$, and take $M_n=(E^{\otimes n})^G$. The generating function of $\{\dim(M_n)\}$ is D-finite (and typically not rational, or even algebraic). This is presumably well-known, though we do not know a reference. See \S \ref{ss:invariants} for a proof.
\end{enumerate}
In this paper, we study the uniformity properties of sequences $\{M_n\}$ with the aim of strengthening and generalizing results like those mentioned above. We prove a number of results of the form ``if $\{M_n\}$ admits a suitable module structure over a twisted commutative algebra then the sequence $\{M_n\}$ follows a predictable pattern.'' The specific results give various precise meanings to ``suitable'' and ``predictable pattern.''

\subsection{Statement of results}

We now state some of our results in a little more detail. Let $\bk$ be a field of characteristic~0. A {\bf twisted commutative algebra} (tca) over $\bk$ is an associative unital graded $\bk$-algebra $A=\bigoplus_{n \ge 0} A_n$ equipped with an action of the symmetric group $S_n$ on $A_n$, such that the multiplication is commutative up to a ``twist'' by the symmetric group. A {\bf module} over a tca $A$ is a graded vector space $M=\bigoplus_{n \ge 0} M_n$ equipped with an action of $S_n$ on $M_n$ and a multiplication $A_n \times M_m \to M_{n+m}$ satisfying suitable axioms. Note that a module $M$ gives rise to a sequence of representations $\{M_n\}$ as discussed above. TCA's and their modules have been a central object of study in the developing field of representation stability. For example, $\bk[t]$ can be regarded as a tca (with $t$ of degree~1, and all $S_n$ actions trivial), and modules over it are equivalent to the $\FI$-modules of Church--Ellenberg--Farb \cite{fimodule}.

Suppose that $M$ is a module over a tca. We define its {\bf Hilbert series} by
\begin{displaymath}
\rH_M(t) = \sum_{n \ge 0} \dim(M_n) \frac{t^n}{n!}.
\end{displaymath}
The following theorem was proved in \cite{delta}, and later reproved in \cite{catgb}:

\begin{theorem} \label{thm:intro}
Suppose $M$ is a finitely generated module over a tca finitely generated in degree~1. Then $\rH_M(t)$ is a polynomial in $t$ and $e^t$.
\end{theorem}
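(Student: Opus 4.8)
The plan is to reduce to the case of a free tca, compute Hilbert series of projectives and of torsion modules by hand, and then reduce a general finitely generated module to these two kinds of building block by a dévissage using the shift functor.

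First, some formal reductions. If $A$ is a tca generated in degree~$1$ and $\dim A_1=m$, then $A$ is a quotient of the free tca $B=\Sym(A_1)$ (with $A_1$ placed in degree~$1$), and a finitely generated $A$-module is also finitely generated as a $B$-module with the same underlying graded vector space and hence the same Hilbert series; so we may assume $A=B$ is the free tca on $m$ generators in degree~$1$, i.e.\ $A\cong\bk[t]^{\otimes m}$ as tca's. In this case $A$-modules are the same as representations of a combinatorial category generalizing $\FI$ (for $m=1$ it is exactly $\FI$), and in particular a finitely generated $A$-module is "torsion" (supported in finitely many degrees) if and only if it is finite-dimensional. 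We will use that $M\mapsto\rH_M$ is additive on short exact sequences, and that $\bk[t,e^t]$ is closed under $d/dt$ and under taking antiderivatives: indeed $\int t^d\,dt=t^{d+1}/(d+1)$, and by repeated integration by parts $\int t^d e^{ct}\,dt=p(t)e^{ct}$ for a polynomial $p$.

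Second, the base cases. From $A\cong\bk[t]^{\otimes m}$ and the tensor-product formula for tca's one gets $\dim A_n=m^n$, so $\rH_A(t)=e^{mt}$. More generally, the principal projective $P_d$ generated in degree~$d$ by the regular representation $\bk[S_d]$ satisfies $(P_d)_n=\mathrm{Ind}_{S_{n-d}\times S_d}^{S_n}(A_{n-d}\boxtimes\bk[S_d])$, so $\dim(P_d)_n=\tfrac{n!}{(n-d)!}m^{n-d}$ and $\rH_{P_d}(t)=t^d e^{mt}$. Similarly an induced module $\mathrm{Ind}(W)=A\otimes W$ built on a finite-dimensional $S_d$-representation $W$ has $\rH_{\mathrm{Ind}(W)}(t)=\tfrac{\dim W}{d!}\,t^d e^{mt}$. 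Hence every finitely generated projective, and more generally every module with a finite filtration by induced modules, has Hilbert series in $\bk[t,e^t]$; and any finite-dimensional module visibly has polynomial Hilbert series.

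Third, the dévissage. Let $\Sigma$ be the shift functor, $(\Sigma M)_n=M_{n+1}$ with its residual $S_n$-action; it is exact, takes $A$-modules to $A$-modules, preserves finite generation, and satisfies $\frac{d}{dt}\rH_M=\rH_{\Sigma M}$ by termwise differentiation. The key structural input — this is the heart of \cite{delta}/\cite{catgb}, going back to Nagpal's work on $\FI$-modules — is that for a finitely generated $A$-module $M$ there is an integer $a$ such that $\Sigma^a M$ is \emph{semi-induced}, i.e.\ admits a finite filtration with induced subquotients. By the previous paragraph $\rH_{\Sigma^a M}(t)\in\bk[t,e^t]$. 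Now descend: $\rH_{\Sigma^{j-1}M}$ is an antiderivative of $\rH_{\Sigma^j M}$ whose constant term is $\dim M_{j-1}$, so closure of $\bk[t,e^t]$ under antiderivatives gives $\rH_{\Sigma^{j-1}M}(t)\in\bk[t,e^t]$; iterating from $j=a$ down to $j=1$ yields $\rH_M(t)\in\bk[t,e^t]$.

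The main obstacle is exactly the structural input of the third step: that a finitely generated module becomes semi-induced after finitely many shifts, which amounts to bounding its Castelnuovo--Mumford-type regularity. I would prove this by induction on the generation degree: choose a finite free presentation $0\to K\to P\to M\to0$ (using Noetherianity of $A$, itself nontrivial), track how $\Sigma$ interacts with $K$, $P$, and the local cohomology of $M$, and show the defect to being induced is finite-dimensional with controlled degree. Alternatively, one can bypass the representation theory entirely and follow \cite{catgb}: present $M$ by principal projectives, pass to the initial (monomial) module for an admissible order — which has the same Hilbert series — and reduce to the purely combinatorial statement that the exponential generating function of the set of standard monomials of a monomial submodule of a finite sum of $P_{d_i}$'s lies in $\bk[t,e^t]$. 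That last fact follows from the Noetherianity of the poset of monomials for our combinatorial category by an inclusion--exclusion over finitely many forbidden patterns, each contributing a term of the form (polynomial in $t$)$\cdot e^{mt}$.
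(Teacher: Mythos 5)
Your outline is clean for the case $\dim A_1 = 1$ (ordinary $\FI$-modules), but the key structural input you invoke in the third step is false for $\dim A_1 \ge 2$, and this is a genuine gap. You claim that for a finitely generated module $M$ over $A = \bA(\bk^m)$ there is an integer $a$ such that $\sD^a M$ (your $\Sigma^a M$) is semi-induced, i.e.\ has a finite filtration with subquotients of the form $A \otimes W$. Such a module has $\rH(t) = p(t)\,e^{mt}$ for a polynomial $p$, and this form is preserved by $\frac{d}{dt}$. Now take $m=2$ and $M = A/\fa_1$, the quotient by the first determinantal ideal, which is cyclic and hence finitely generated; its underlying object of $\cV$ is $\bigoplus_{n\ge 0}\Sym^n(\bk^2)\otimes\Sym^n(\bV)$, so $\dim M_n = n+1$ and
\[
\rH_M(t) = \sum_{n\ge 0}(n+1)\frac{t^n}{n!} = (t+1)e^t, \qquad \rH_{\sD^a M}(t) = (t+a+1)e^t.
\]
No shift of $M$ has Hilbert series of the form $p(t)e^{2t}$, so no shift of $M$ is semi-induced. (This also shows the attribution is off: Nagpal's theorem is not the heart of \cite{delta} or \cite{catgb}, which prove Theorem~\ref{thm:intro} by, respectively, Weyl integration on the torus and Gr\"obner methods, and Nagpal's theorem as you state it simply does not extend beyond rank one.) The correct replacement, which is exactly what the present paper uses, is the decomposition of $\rD^b_{\fgen}(A)$ into pieces indexed by $r = 0,\dots,m$ coming from pushforwards $\rR\pi_*(\cF\otimes\bA(\cQ_r))$ from Grassmannians $\Gr_r(\bk^m)$ (Theorem~\ref{thm:groth} and \cite[Corollary~6.16]{symu1}); the induced modules you work with are only the $r=m$ layer, and the missing layers are precisely what contribute the $e^{rt}$ terms with $r < m$, as in the example above.

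The Gr\"obner-theoretic alternative you sketch at the end is sound and is essentially the proof given in \cite{catgb}: one reduces to monomial submodules of free modules and does an inclusion--exclusion over a finite antichain of forbidden patterns, with each cell contributing a term in $\bk[t,e^t]$. If you want to salvage the $\sD$-and-integrate approach, you would need to replace ``semi-induced'' by the $\Gr_r$-pushforward building blocks and verify that each such block has Hilbert series in $\bk[t,e^t]$ (this is what Lemmas~\ref{lem:fchar1} and~\ref{lem:fchar2} in the paper do, after which $\ex$ sends the formal character to the Hilbert series).
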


This theorem implies that the generating function for $\{\dim{M_n}\}$ is rational, but is even stronger than this. The results of this paper generalize and strengthen this theorem in various ways.

We now informally describe some of the main results of this paper. In what follows, $A$ denotes a tca generated in degree~1 and $M$ denotes a finitely generated $A$-module.
\begin{enumerate}
\item We introduce the {\bf formal character} $\Theta_M$ of $M$. This records the character of each representation $M_n$, and thus contains much more information than the Hilbert series. Using the structure theory of $A$-modules developed in \cite{symu1}, we give a very precise description of $\Theta_M$. Our result shows that there is a finite expression for $\Theta_M$, and moreover that the pieces in this expression reflect the structure of $M$. As a corollary, we see that if $M$ and $N$ are finitely generated $A$-modules such that $M_n$ and $N_n$ have the same character for all $n$, then $M$ and $N$ represent the same class in the Grothendieck group of $A$-modules.
\item Using the method of \cite{delta}, we give a different proof of a rationality result for $\Theta_M$. Actually, we work with the enhanced Hilbert series introduced in \cite{symc1}, which is equivalent to the formal character. This is less precise than the proof described in (a), but does not rely on the theory from \cite{symu1}.
\item We show how the Fourier transform from \cite{symu1} affects (enhanced) Hilbert series.
\item We explain how all of the results on (enhanced) Hilbert series carry over to the more subtle Poincar\'e series.
\item Theorem~\ref{thm:intro} can be stated equivalently as: there exists a polynomial $p(T)$ with non-negative integer roots such that $p(\frac{d}{dt}) \rH_M(t)=0$. We prove a categorification of this result, in which $\frac{d}{dt}$ is replaced with the Schur derivative.
\end{enumerate}
Additionally, we prove a result for tca's not necessarily generated in degree~1:
\begin{enumerate}
\setcounter{enumi}{5}
\item If $A$ is a finitely generated tca with $A_0=\bC$ and $M$ is a finitely generated and bounded $A$-module, then $\rH_M(t)$ is a D-finite power series.
\end{enumerate}

\subsection{Open problems}

We now list some questions and open problems related to the work in this paper.
\begin{itemize}
\item To what extent do the results here carry over to positive characteristic? Theorem~\ref{thm:intro} is known to hold in positive characteristic by \cite[Corollary 7.1.7]{catgb}, but the other results of this paper are not known.
\item Result~(a) above gives a finite expression for the formal character $\Theta_M$ in the case where $M$ is a finitely generated module over a tca $A$ finitely generated in degree~1. Is there a more general result if one assumes that $A$ is bounded instead of generated in degree~1?
\item Result~(e) categorifies Theorem~\ref{thm:intro}. Can the rationality theorem for the formal character be similarly categorified? Presumably, such a categorification should make use of the more general Schur derivatives $\sD_{\lambda}$.
\item The discussion in \S \ref{ss:catdiffop} raises a number of questions about the categorified rationality. For example: in the tensor category of differential operators, what can one say about the ideal annihilating a given module? Result~(e) ensures that it is non-zero.
\item What can be said about the Hilbert series of a finitely generated module over an arbitrary finitely generated tca? For example, is it D-finite? 
\item There are interesting sequences $\{M_n\}$ of symmetric group representations that do not come from tca's, but related structures, such as $\FS^{\op}$-modules (see \cite[\S 8]{catgb}). To what extent do the results here extend to those sequences?
\end{itemize}

\subsection{Outline}

In \S \ref{s:prelim} we review some background material that we will require. In \S \ref{s:fchar} we introduce the formal character and prove our ``rationality'' theorem for it. In \S \ref{s:hilbstd} we translate our results about formal characters to Hilbert series (and Poincar\'e series), and also give an elementary proof of the rationality theorem in this setting. In \S \ref{s:cat} we prove a result categorifying the rationality theorem. In \S \ref{s:dfin} we prove D-finiteness for Hilbert series over bounded tca's not necessarily generated in degree~1. Finally, in \S \ref{s:ex} we give some examples and applications of our results.

\section{Preliminaries} \label{s:prelim}

This paper is a continuation of \cite{symu1} and hence will use the same notation.

Throughout, $X$ denotes a separated, noetherian $\bC$-scheme of finite Krull dimension, and $\cE$ is a vector bundle over $X$. We will primarily be interested in the case when $X$ is a point, but in order to simplify some of the arguments, it will be necessary to allow $X$ to be a Grassmannian. Any extra generality beyond that will not be used, but does not present any additional difficulties. The reader unfamiliar with the language of sheaf theory can assume $X$ is a point for most of the paper, in which case quasi-coherent sheaves specialize to complex vector spaces, and coherent sheaves specialize to finite-dimensional vector spaces.

Throughout, we make use of the category $\cV_X$, which has several equivalent models. The two that we use here are the category of sequences of symmetric group representations on quasi-coherent $\cO_X$-modules, and the category of polynomial functors from the category of vector spaces to quasi-coherent $\cO_X$-modules. See \cite[\S 5]{expos} for more details in the case $X = \Spec(\bC)$. In the first model, every object $\cV_X$ admits a decomposition $\bigoplus_{\lambda} \bM_{\lambda} \otimes \cF_{\lambda}$ where $\bM_{\lambda}$ is the usual Specht module over $\bC$ and $\cF_{\lambda}$ is a quasi-coherent sheaf on $X$; in the second model, we get a similar decomposition, but with $\bM_{\lambda}$ replaced by the Schur functor $\bS_{\lambda}$. Given $F \in \cV_X$ in the second model, we use $\ell(F)$ to denote the maximum number of parts of any $\lambda$ such that $\bS_\lambda$ has a nonzero multiplicity space. We say that $F$ is bounded if $\ell(F) < \infty$. We let $\cV_X^\dfg$ denote the subcategory of $\cV_X$ where the multiplicity spaces $\cF_{\lambda}$ are coherent. (The ``dfg'' superscript means ``degreewise finitely generated.'') We let $\cV_X^\fgen$ be the subcategory of $\cV_X^\dfg$ where all but finitely many of the $\cF_{\lambda}$ vanish. We also let $\bV=\bC^{\infty}=\bigcup_{n \ge 1} \bC^n$ be the standard representation of $\GL_{\infty}$; evaluating a Schur functor on $\bV$ gives an equivalence between the category of polynomial functors and the category of polynomial representations of $\GL_{\infty}$.

Since we are working over a field of characteristic $0$, we can use an alternative description of tca's: they are polynomial functors from the category of vector spaces to the category of (sheaves of) commutative algebras. We let $\bA(\cE)$ denote the tca which sends a vector space $V$ to the commutative algebra $\Sym(\cE \otimes V)$.

\subsection{K-theory of relative Grassmannians} 

For a non-negative integer $r$, $\Gr_r(\cE)$ denotes the relative Grassmannian of rank $r$ quotients of $\cE$. Let $\pi_r \colon \Gr_r(\cE) \to X$ be the structure map, and let $\cQ = \cQ_r$ be the tautological quotient bundle on $\Gr_r(\cE)$ and let $\cR = \cR_r$ denote the tautological subbundle, so that we have a short exact sequence
\[
0 \to \cR \to \pi^* \cE \to \cQ \to 0
\]
on $\Gr_r(\cE)$. Define
\begin{displaymath}
i_r \colon \rK(\Gr_r(\cE)) \to \rK(\bA(\cE)), \qquad
i_r([V]) = [\rR \pi_r{}_*(V \otimes \bA(\cQ_r))].
\end{displaymath}
Let $\Lambda$ denote the ring of symmetric functions, and let $\Lambda_d$ be the space of symmetric functions which are homogeneous of degree $d$. The group $\rK(\bA(\cE))$ is naturally a $\Lambda$-module via $s_{\lambda} \cdot [M]=[\bS_{\lambda}(\bV) \otimes M]$. The following can be found in \cite[Theorem 6.19]{symu1}:

\begin{theorem} \label{thm:groth}
The maps $i_r$ induce an isomorphism of $\Lambda$-modules
\begin{displaymath}
\bigoplus_{r=0}^{\rank \cE} \Lambda \otimes \rK(\Gr_r(\cE)) \to \rK(\bA(\cE)).
\end{displaymath}
\end{theorem}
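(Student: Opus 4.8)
The plan is to deduce the theorem from the structure theory of finitely generated $\bA(\cE)$-modules developed in \cite{symu1}, and then to run a formal bookkeeping argument with Grothendieck groups. First I would check that the map is well-defined and $\Lambda$-linear. Each $\bA(\cQ_r)$ is a sheaf of polynomial algebras, hence flat over $\cO_{\Gr_r(\cE)}$, so $V \mapsto V \otimes \bA(\cQ_r)$ is exact; composing with the derived pushforward along the proper map $\pi_r$ --- which lands in finitely generated $\bA(\cE)$-modules, using the surjection $\pi_r^* \bA(\cE) \to \bA(\cQ_r)$ induced by $\pi_r^* \cE \to \cQ_r$ --- shows that $V \mapsto [\rR\pi_{r*}(V \otimes \bA(\cQ_r))]$ sends short exact sequences in $\cV^{\fgen}_{\Gr_r(\cE)}$ to relations in $\rK(\bA(\cE))$, so it defines $i_r$ on all of $\rK(\cV^{\fgen}_{\Gr_r(\cE)})$. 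Under the canonical identification $\rK(\cV^{\fgen}_Y) = \Lambda \otimes \rK(Y)$ --- valid because $\cV^{\fgen}_Y$ is, as an abelian category, the direct sum over partitions $\lambda$ of copies of the category of coherent sheaves on $Y$, one for each Schur functor $\bS_\lambda$ --- this restricts to the $i_r$ of the statement, and $\Lambda$-linearity is automatic once one invokes the projection formula, which gives $s_\lambda \cdot i_r([V]) = i_r([\bS_\lambda(\bV) \otimes V])$. So it remains to prove bijectivity.

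The structural input I would import from \cite{symu1} is a stratification of the category of finitely generated $\bA(\cE)$-modules: a finite increasing chain of Serre subcategories $0 = \mathcal{C}_{-1} \subseteq \mathcal{C}_0 \subseteq \cdots \subseteq \mathcal{C}_{\rank \cE}$, whose last term is the whole category, where $\mathcal{C}_r$ consists of the modules of ``generic rank'' at most $r$, and which enjoys: (i) each inclusion $\mathcal{C}_{r-1} \hookrightarrow \mathcal{C}_r$ is part of a recollement, so admits a retraction after applying $\rK$; (ii) the Serre quotient $\mathcal{C}_r / \mathcal{C}_{r-1}$ is equivalent to $\cV^{\fgen}_{\Gr_r(\cE)}$; and (iii) under this equivalence, a section of the localization $\mathcal{C}_r \to \mathcal{C}_r / \mathcal{C}_{r-1}$ is computed by $V \mapsto \rR\pi_{r*}(V \otimes \bA(\cQ_r))$, so it induces $i_r$ on $\rK$. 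The content behind (iii) is that $\rR\pi_{r*}(V \otimes \bA(\cQ_r))$ has generic rank exactly $r$ (because $\cQ_r$ has rank $r$), lies in $\mathcal{C}_r$, and generates $\mathcal{C}_r / \mathcal{C}_{r-1}$.

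Granting this, I would prove by induction on $r$ that $\bigoplus_{s \le r} i_s$ is an isomorphism $\bigoplus_{s \le r} \Lambda \otimes \rK(\Gr_s(\cE)) \to \rK(\mathcal{C}_r)$, the case $r = \rank \cE$ being the theorem; the base case $r = -1$ is trivial. For the inductive step, the localization theorem for Grothendieck groups provides an exact sequence $\rK(\mathcal{C}_{r-1}) \to \rK(\mathcal{C}_r) \to \rK(\mathcal{C}_r/\mathcal{C}_{r-1}) \to 0$; by (i) the first map is split injective, and by (iii) the functor $i_r$ splits the surjection, so $\rK(\mathcal{C}_r)$ is the internal direct sum of the image of $\rK(\mathcal{C}_{r-1})$ and the image of $(i_r)_*$. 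Since $i_s$ factors through $\mathcal{C}_{r-1}$ for $s \le r-1$, the inductive hypothesis together with (ii) identifies this decomposition with $\bigoplus_{s \le r} \Lambda \otimes \rK(\Gr_s(\cE))$ and the isomorphism with $\bigoplus_{s \le r} i_s$. (Surjectivity alone needs only right exactness of the localization sequences and the splitting $i_r$; injectivity can instead be checked directly, by localizing a relation $\sum_s i_s(x_s) = 0$ at the top stratum, which annihilates $i_s(x_s)$ for $s < \rank \cE$ and is the identity on $x_{\rank \cE}$, forcing $x_{\rank \cE} = 0$, and then descending one stratum at a time.)

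The main obstacle is setting up the dictionary of the second paragraph, and in particular establishing property (iii): one has to recognize, inside the structure theory of \cite{symu1}, that the section of the $r$-th localization is computed by the explicit pushforward $\rR\pi_{r*}(- \otimes \bA(\cQ_r))$, and that the stratification has length exactly $\rank \cE + 1$ with strata $\cV^{\fgen}_{\Gr_r(\cE)}$. Once the structure theory is put in this form, the Grothendieck-group argument is purely formal. If one prefers to avoid recollements, the structural input can be repackaged as: every finitely generated $\bA(\cE)$-module is quasi-isomorphic to a finite complex of finite direct sums of modules of the form $\rR\pi_{r*}(\cF \otimes \bA(\cQ_r))$ (which gives surjectivity), together with the fact that ``generic rank'' is a well-defined invariant (which gives injectivity).
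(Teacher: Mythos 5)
The paper does not prove this theorem; it cites \cite[Theorem~6.19]{symu1} and uses it as input. So your proposal is a reconstruction of the argument in \cite{symu1}, not a rival to a proof in the present paper. With that understood, your overall framework does match the one used there: stratify the category of finitely generated $\bA(\cE)$-modules by the chain of Serre subcategories $\Mod_{A,\le r}$ cut out by the determinantal ideals $\fa_r$ (exactly the subcategories recalled in this paper just before Corollary~\ref{cor:dim-bound}), exploit the exactness of the localization sequence on Grothendieck groups together with the splitting coming from the section functor $S_{>r}$, and induct up the chain; the generation statement you invoke for surjectivity is precisely \cite[Corollary~6.16]{symu1}, which the present paper also uses in the proof of Theorem~\ref{thm:fchar}. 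The opening checks (flatness of $\bA(\cQ_r)$, properness of $\pi_r$, the surjection $\pi_r^*\bA(\cE) \to \bA(\cQ_r)$, the projection formula for $\Lambda$-linearity) are all correct and are the right way to see that $i_r$ is well-defined and $\Lambda$-linear.

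There is, however, a real imprecision in your step (ii) that conceals a nontrivial piece of the proof. The Serre quotient $\mathcal{C}_r/\mathcal{C}_{r-1}$ is not equivalent to $\cV^{\fgen}_{\Gr_r(\cE)}$. In \cite{symu1} it is identified with a category of ``generic'' (saturated) finitely generated modules over a tca on $\Gr_r(\cE)$, which is a genuinely larger category than $\cV^{\fgen}_{\Gr_r(\cE)}$; computing \emph{its} Grothendieck group and showing it equals $\Lambda \otimes \rK(\Gr_r(\cE))$ is a separate, substantive step (essentially a semi-orthogonality/Koszul-resolution argument in the derived category). As written, your (ii) silently assumes this computation, and the ``alternative'' injectivity argument you sketch at the end (``generic rank is a well-defined invariant'') does not recover it either: knowing the ranks separate the strata shows the images of the $i_r$ intersect trivially, but injectivity of each individual $i_r$ still requires knowing $\rK$ of the quotient. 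Once that gap is filled in, the rest of the inductive bookkeeping with localization sequences is sound and agrees with the strategy of \cite{symu1}.
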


\subsection{Integration on the torus} \label{ss:integration}

Let $T$ be the diagonal torus in $\GL(d)$.  Denote by $\alpha_1, \ldots, \alpha_d$ the standard characters $T \to \bG_m$, and identify the representation ring $K(T)$ with the ring of Laurent polynomials $\bQ[\alpha_1, \alpha_1^{-1}, \dots, \alpha_d, \alpha_d^{-1}]$.  We let $f \mapsto \int_T f d\alpha$ be the projection map $\rK(T) \to \bQ$ which takes the constant term of $f$.  We let $f \mapsto \ol{f}$ be the ring homomorphism $\rK(T) \to \rK(T)$ defined by sending $\alpha_i$ to $\alpha_i^{-1}$. Define 
\[
\Delta(\alpha) = \prod_{1 \le i<j \le d} (\alpha_i-\alpha_j), \qquad \vert \Delta \vert^2 = \Delta \ol{\Delta}.
\]
Weyl's integration formula (see \cite[\S 26.2]{FH}) can then be stated as follows:  if $f, g \in \rK(T)$ are the characters of irreducible representations of $\GL(d)$, then
\begin{equation}
\label{weyl}
\frac{1}{d!} \int_T f(\alpha) g(\ol{\alpha}) \vert \Delta(\alpha) \vert^2 d \alpha=\begin{cases}
1 & \textrm{if $f=g$} \\
0 & \textrm{if $f \ne g$}
\end{cases}.
\end{equation}

\subsection{Symmetric groups and symmetric functions}

Fix a nonnegative integer $n$. Let $\lambda$ be an integer partition of $n$ and let $c_\lambda$ be the conjugacy class of permutations with cycle type $\lambda$. Also, partitions parametrize the irreducible complex representations $\bM_\lambda$. We let $\tr(c_\mu \vert \bM_\lambda)$ denote the trace of any element of $c_\mu$ acting on $\bM_\lambda$. We refer to \cite[\S 2]{expos} for basic properties and further references.

Given an integer partition $\lambda = (\lambda_1, \dots, \lambda_r)$, let $m_i(\lambda)$ denote the number of $\lambda_j$ that are equal to $i$, and set $\lambda! = \prod_i m_i(\lambda)!$ and $z_\lambda = \lambda! \prod_i i^{m_i(\lambda)}$.

For an integer $n$, let $p_n = p_n(\alpha)$ denote the power sum symmetric polynomial $\sum_{i=1}^d \alpha_i^n$.  For a partition $\lambda=(\lambda_1, \ldots, \lambda_r)$, let $p_{\lambda}$ denote the polynomial $p_{\lambda_1} \cdots p_{\lambda_r}$. We allow $d$ to be infinite. The notation $s_\lambda$ is reserved for the Schur function. By \cite[7.17.5, 7.18.5]{stanley}, we have
\begin{align} \label{eqn:s-to-p}
s_\lambda = \sum_{|\mu|=|\lambda|} \tr(c_\mu \vert \bM_\lambda) z_\mu^{-1} p_\mu.
\end{align}

\section{Formal characters} \label{s:fchar}

{\it In this section, we will assume all schemes $X$ have an ample line bundle $\cL$.}\footnote{Recall that this means that for every coherent sheaf $\cF$, $\cF \otimes \cL^n$ is generated by global sections for $n \gg 0$. This is in particular satisfied if $X$ is a quasi-projective variety, or an affine scheme.} 

Let $V$ be an object of $\cV^{\dfg}_X$. Recall that $V$ decomposes as $\bigoplus_{\lambda} V_{\lambda} \otimes \bS_{\lambda}(\bV)$ where $V_{\lambda} \in \Mod_X^{\fgen}$. We define the {\bf formal character} of $V$ as
\begin{displaymath}
\Theta_V = \sum_{\lambda} [V_{\lambda}] s_{\lambda},
\end{displaymath}
where, as usual, $[V_{\lambda}]$ is the class of $V_{\lambda}$ in $\rK(X)$ and $s_{\lambda} \in \Lambda$ is the Schur function. Thus $\Theta_V$ is a potentially infinite series whose terms belong to $\Lambda \otimes \rK(X)$. Since $\Lambda$ is the polynomial ring in the complete homogeneous symmetric functions $\{s_n\}_{n \ge 1}$, one can also think of $\Theta_V$ as a power series in these variables with coefficients in $\rK(X)$. When $X$ is a point, $\dim(V_{\lambda})$ is the class of the vector space $V_\lambda$ in $\rK(X) \cong \bZ$.

We write $\rK'(X)$ for the Grothendieck group of vector bundles on $X$. The group $\rK(X)$ is a module over $\rK'(X)$, and there is a natural map $\rK'(X) \to \rK(X)$.

\begin{remark}[Splitting principle]
Given a symmetric polynomial $f(x_1, \dots, x_n)$ and a vector bundle $\cE$ of rank $n$ on $X$, then we define $f([\cE]) \in \rK'(X)$ as follows: if $\cE$ has a filtration whose associated graded is $\cL_1 \oplus \cdots \oplus \cL_n$, where the $\cL_i$ are line bundles, then $f([\cE]) = f([\cL_1], \dots, [\cL_n])$. In general, consider the relative flag variety $\pi \colon {\bf Flag}(\cE) \to X$. In this case, $\pi^* \cE$ has a filtration by line bundles, and we define $f([\cE]) := \rR\pi_*(f([\pi^*\cE]))$. (See \cite[\S 3.2]{fulton}; note that when $\cE$ is a line bundle, ${\bf Flag}(\cE) = X$ and $\pi^*$ and $\pi_*$ are both identity maps.)
\end{remark}

We now introduce some notation needed to state our main result on formal characters. For $k \ge 0$, let $\sigma_k = \sum_{n \ge k} \binom{n}{k} s_n$. (In the definition of $\sigma_0$ we use the convention $s_0=1$.) For a partition $\lambda$, we put
\begin{displaymath}
\sigma^{\lambda} = \prod_{n \ge 1} \sigma_n^{m_n(\lambda)}
\end{displaymath}
where $m_n(\lambda)$ is the multiplicity of $n$ in $\lambda$. 

\begin{lemma}
The elements $\sigma_0, \sigma_1, \dots$ are algebraically independent over $\bQ$.
\end{lemma}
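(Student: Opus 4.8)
The plan is to prove the (equivalent) assertion that for every fixed $N \ge 0$ the finite list $\sigma_0, \dots, \sigma_N$ is algebraically independent over $\bQ$; this suffices, since any algebraic relation among the $\sigma_k$ involves only finitely many of them. It is worth noting at the outset that the naive idea---that $\{\sigma_k\}$ is an invertible $\bQ$-linear change of the polynomial generators $\{s_n\}_{n \ge 1}$ of $\Lambda$---does not quite work, because the Pascal matrix $\binom{n}{k}$ couples to the ``variable'' $s_0 = 1$, which is a scalar and not a free indeterminate; so some genuine input is needed.

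My approach is to specialize to finitely many variables. For $m \ge 1$ let $\varphi_m \colon \Lambda \to \bQ[x_1, \dots, x_m]$ be the evaluation homomorphism, so that $\varphi_m(s_n) = s_n(x_1, \dots, x_m)$ is homogeneous of degree $n$; being degree-preserving, $\varphi_m$ extends continuously to the completed ring $\widehat{\Lambda}$ in which the $\sigma_k$ live. Take $m = N+1$ and set $c_k := \varphi_{N+1}(\sigma_k) = \sum_{n \ge k} \binom{n}{k}\, s_n(x_1, \dots, x_{N+1})$; it is enough to show that $c_0, \dots, c_N$ are algebraically independent over $\bQ$. Expanding $(1+u)^n$ binomially turns $\sum_k \sigma_k u^k = \sum_n s_n (1+u)^n$ into, after applying $\varphi_{N+1}$,
\[
\sum_{k \ge 0} c_k u^k \;=\; \sum_{n \ge 0} s_n(x_1, \dots, x_{N+1})\, (1+u)^n ,
\]
and since $\sum_{n \ge 0} s_n(x_1, \dots, x_{N+1})\, w^n = \prod_{i=1}^{N+1}(1 - x_i w)^{-1} =: G(w)$, the right-hand side (expanded in $u$) is the Taylor expansion of $G(w)$ at $w = 1$. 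Hence $c_k = \frac{1}{k!} G^{(k)}(1)$, and in particular each $c_k$ is a rational function of $x_1, \dots, x_{N+1}$. As $\bQ(x_1, \dots, x_{N+1})$ has transcendence degree $N+1$ over $\bQ$, it now suffices (the Jacobian criterion for algebraic independence, valid in characteristic zero) to check that the Jacobian matrix $\big(\partial c_k / \partial x_i\big)_{0 \le k \le N,\ 1 \le i \le N+1}$ is invertible over $\bQ(x_1, \dots, x_{N+1})$.

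For this, differentiating $G$ gives $\partial_{x_i} G(w) = G(w)\cdot \frac{w}{1 - x_i w}$, hence $\partial_{x_i} c_k = \frac{1}{k!}\, \frac{d^k}{dw^k}\big[G(w)\frac{w}{1 - x_i w}\big]\big|_{w = 1}$. A $\bQ(x_1, \dots, x_{N+1})$-linear relation $\sum_{i=1}^{N+1} \lambda_i\, \partial_{x_i} c_k = 0$ for $k = 0, \dots, N$ says exactly that the rational function $\Phi(w) := \sum_i \lambda_i\, G(w)\frac{w}{1 - x_i w}$ vanishes to order $\ge N+1$ at $w = 1$. Clearing denominators, $\Phi(w) = w\, \Psi(w) / D(w)^2$ where $D(w) = \prod_{i=1}^{N+1}(1 - x_i w)$ and $\Psi(w) = \sum_i \lambda_i \prod_{j \ne i}(1 - x_j w)$ is a polynomial in $w$ of degree $\le N$. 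Since $D(1) = \prod_i(1 - x_i) \ne 0$, the factor $w/D(w)^2$ is a unit at $w = 1$, so $\Psi$ must vanish to order $\ge N+1$ at $w = 1$; being of degree $\le N$, it vanishes identically. Substituting $w = 1/x_{i_0}$ into $\Psi = 0$ kills every term with $i \ne i_0$ and leaves $\lambda_{i_0} \prod_{j \ne i_0}(1 - x_j/x_{i_0}) = 0$, whence $\lambda_{i_0} = 0$ because the $x_j$ are distinct indeterminates. Thus all $\lambda_i$ vanish, the Jacobian is nonsingular, and $c_0, \dots, c_N$---hence $\sigma_0, \dots, \sigma_N$---are algebraically independent.

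I expect the only real difficulty to be the conceptual one: recognizing that the statement is not a formality, and hitting on the specialization together with the generating-function reformulation $c_k = \frac{1}{k!} G^{(k)}(1)$; everything after that is a short degree count. As an alternative to the Jacobian, one can argue that the values $c_0, \dots, c_N$ already determine the polynomial $D(w)$ (from $G(w) D(w) = 1$ and $D(0) = 1$), hence the unordered tuple $(x_1, \dots, x_{N+1})$, so the map $(x_1, \dots, x_{N+1}) \mapsto (c_0, \dots, c_N)$ has finite generic fibers and is therefore dominant, which again yields the algebraic independence of $c_0, \dots, c_N$.
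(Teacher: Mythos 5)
Your proof is correct, and it takes a genuinely different route from the paper's. The paper handles the obstruction you identify (the coupling to $s_0 = 1$) by working with $\sigma_0 - 1$, which is a nonzerodivisor in $\wh{\Lambda}$, and by assigning the monomial $(\sigma_0-1)^{d_0}\sigma_1^{d_1}\cdots\sigma_r^{d_r}$ the bidegree $(d_0,\ d_1 + 2d_2 + \cdots + rd_r)$; it then argues that any algebraic relation can be made bihomogeneous, factors out a power of $\sigma_0 - 1$, and reduces to the observation that $\{\sigma_k\}_{k \ge 1}$ is an upper-triangular change of variables from $\{s_n\}_{n \ge 1}$. You instead specialize to $N+1$ variables, reinterpret the specialized $\sigma_k$ as the Taylor coefficients $\tfrac{1}{k!}G^{(k)}(1)$ of the rational function $G(w) = \prod_i (1-x_i w)^{-1}$, and verify the Jacobian criterion by a short partial-fractions/degree-count argument; this sidesteps the $\sigma_0$ issue entirely. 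What your approach buys is that every step is explicit and routine, and your generic-fiber alternative is a clean variant (to make it airtight, note that $c_0,\dots,c_N$ give the first $N+1$ Taylor coefficients of $D=1/G$ at $w=1$, which together with $D(0)=1$ and $\deg_w D \le N+1$ generically pin down $D$, hence the multiset $\{x_i\}$). What the paper's approach buys is brevity and the fact that it stays inside $\wh{\Lambda}$ without any specialization, at the cost of leaving the bihomogeneity reduction rather terse.
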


\begin{proof}
Let $m_{d_0,\ldots,d_r}$ be the monomial $(\sigma_0-1)^{d_0} \sigma_1^{d_1} \cdots \sigma_r^{d_r}$. Let $f_{d_0,\ldots,d_r}(k)$ denote the sum of the coefficients of all $s_{\lambda}$ with $\vert \lambda \vert =k$. Then $f_{d_0,\ldots,d_r}(k)$ is a polynomial of degree $d_1+2d_2+\cdots+rd_r$. Moreover, the degree of the leading term (under the usual grading where $\deg s_\lambda = |\lambda|$) of $m_{d_0,\ldots,d_r}$ is $d_0+d_1+2d_2+ \cdots +rd_r$. Hence any linear dependence among the $m$'s can be made homogeneous if we assign $m_{d_0,\ldots,d_r}$ the bidegree $(d_0, d_1 + 2d_2 + \cdots + rd_r)$.

Since $\sigma_0-1$ is a nonzerodivisor, it suffices to handle the case $d_0=0$. But now we can simply observe that the $\sigma_k$'s with $k \ge 1$ are related to the $s_n$'s with $n \ge 1$ by an upper triangular change of variables, and the $s_n$'s are algebraically independent. 
\end{proof}

Let $\wt{\Lambda} = \Lambda \otimes \bZ[\sigma_0, \sigma_1, \dots]$ be the polynomial ring in the $s_n$'s and $\sigma_n$'s. Let $m^{(r)}_{\lambda}$ be the monomial symmetric polynomial in $r$ variables associated to the partition $\lambda$, and define 
\[
M^{(r)}_{\lambda}(x_1,\dots,x_r)=m^{(r)}_{\lambda}(x_1-1, \dots, x_r-1).
\]
For a proper map $\pi \colon Y \to X$, we define a bilinear pairing
\begin{displaymath}
\langle, \rangle \colon \rK'(Y) \times \rK(Y) \to \rK(X), \qquad \langle [\cE], [\cF] \rangle = [\rR \pi_*(\cE \otimes \cF)].
\end{displaymath}
We define a map
\begin{displaymath}
\begin{split}
\theta_r \colon \Lambda \otimes \rK(\Gr_r(\cE)) &\to \wt{\Lambda} \otimes \rK(X), \\[5pt]
s_{\mu} \otimes [\cF] &\mapsto s_\mu \sum_{\substack{\ell(\lambda) \le r}} \sigma^{\lambda} \cdot \sigma_0^{r-\ell(\lambda)} \cdot \langle M^{(r)}_{\lambda}([\cQ]), [\cF] \rangle.
\end{split}
\end{displaymath}
Here $\cQ$ is the tautological quotient bundle on $\Gr_r(\cE)$. We will see below that this sum is finite: the maximum value of $\lambda_1$ for which the term can be non-zero is bounded by a function of $\dim(X)$ and $\rank(\cE)$. We can now state our main result:

\begin{theorem} \label{thm:fchar}
Let $M \in \rD^b_\fgen(\bA(\cE))$, and write $[M]=\sum_{r=0}^{\rank \cE} c_r$ with $c_r \in \Lambda \otimes \rK(\Gr_r(\cE))$ per Theorem~\ref{thm:groth}. Then
\begin{displaymath}
\Theta_M = \sum_{r=0}^{\rank \cE} \theta_r(c_r).
\end{displaymath}
In particular, $\Theta_M$ is a polynomial in the $s_n$'s and $\sigma_n$'s with coefficients in $\rK(X)$.
\end{theorem}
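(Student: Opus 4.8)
The plan is to reduce, via Theorem~\ref{thm:groth}, to evaluating $\Theta$ on each generator of $\rK(\bA(\cE))$ --- where it becomes a Cauchy-identity computation --- and then to recognize the answer through a generating-function identity that exchanges Schur functions $s_\lambda$ for the $\sigma_n$. First I would record that $\Theta_{(-)}$ descends to a homomorphism from $\rK(\bA(\cE))$ to the degree completion of $\Lambda \otimes \rK(X)$, and that it is $\Lambda$-linear in the sense that $\Theta_{\bS_\mu(\bV) \otimes N} = s_\mu \cdot \Theta_N$ (immediate from the Littlewood--Richardson rule). Since $\theta_r$ and the pairing $\langle \cdot, \cdot \rangle$ are additive and $\theta_r$ carries an explicit factor $s_\mu$, it suffices to prove $\Theta_{i_r(c_r)} = \theta_r(c_r)$ for a single generator $c_r = s_\mu \otimes [\cF]$ with $[\cF] \in \rK(\Gr_r(\cE))$; stripping off $s_\mu$, the goal becomes
\begin{displaymath}
\Theta_{\rR \pi_{r*}(\cF \otimes \bA(\cQ))} = \sum_{\ell(\lambda) \le r} \sigma^\lambda \sigma_0^{r - \ell(\lambda)} \langle M^{(r)}_\lambda([\cQ]), [\cF] \rangle .
\end{displaymath}

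For the left side I would use the Cauchy identity: as an object of $\cV_{\Gr_r(\cE)}$ one has $\bA(\cQ) = \bigoplus_{\ell(\lambda) \le r} \bS_\lambda(\cQ) \otimes \bS_\lambda(\bV)$, so tensoring with $\cF$ and applying $\rR \pi_{r*}$ termwise gives $\Theta_{\rR \pi_{r*}(\cF \otimes \bA(\cQ))} = \sum_{\ell(\lambda) \le r} \langle [\bS_\lambda(\cQ)], [\cF] \rangle \, s_\lambda$. The heart of the argument is the formal identity, for independent variables $x_1, \dots, x_r$:
\begin{displaymath}
\sum_{\ell(\lambda) \le r} s_\lambda \cdot s_\lambda(x_1, \dots, x_r) = \sum_{\ell(\lambda) \le r} \sigma^\lambda \sigma_0^{r - \ell(\lambda)} \cdot m^{(r)}_\lambda(x_1 - 1, \dots, x_r - 1) .
\end{displaymath}
I would prove this by rewriting the left side via Cauchy as $\prod_{i=1}^r \bigl( \sum_{n \ge 0} s_n x_i^n \bigr)$ (using $s_n = h_n$), substituting $x_i = 1 + y_i$ and using $\sum_{n \ge 0} s_n (1 + y)^n = \sum_{k \ge 0} \sigma_k y^k$ (the definition of $\sigma_k$ plus the binomial theorem) to turn it into $\prod_{i=1}^r \bigl( \sum_{k \ge 0} \sigma_k y_i^k \bigr)$, and then expanding and collecting the monomials $y_1^{k_1} \cdots y_r^{k_r}$ according to the partition $\lambda$ formed by the exponents: such a monomial has coefficient $\sigma^\lambda \sigma_0^{r - \ell(\lambda)}$, and the monomials with a fixed underlying partition sum to $m^{(r)}_\lambda(y_1, \dots, y_r)$. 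To pass to geometry I would go to the flag bundle $q \colon \mathbf{Flag}(\cQ) \to \Gr_r(\cE)$, set $x_i = [\cL_i]$ for the line-bundle subquotients of $q^* \cQ$, and invoke the splitting principle (so $s_\lambda(x)$ becomes $[\bS_\lambda(\cQ)]$ and $m^{(r)}_\lambda(x - 1)$ becomes $M^{(r)}_\lambda([\cQ])$); multiplying by $q^*[\cF]$ and pushing forward to $X$ --- using the projection formula for $q$ and $\rR q_* \cO_{\mathbf{Flag}(\cQ)} = \cO_{\Gr_r(\cE)}$ --- then yields the displayed equality.

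Finally I would establish finiteness. Each class $[\cL_i] - 1$ lies in the first step of the topological (equivalently $\gamma$-) filtration on $\rK'(\mathbf{Flag}(\cQ))$ --- this is where the standing ampleness hypothesis on $X$ is used, to guarantee that a suitable twist of $\cL_i$ has a section cutting out a divisor, hence a Koszul presentation --- and that filtration vanishes in degrees exceeding $\dim \mathbf{Flag}(\cQ)$, which is bounded by $\dim X + r(\rank \cE - r) + \binom{r}{2}$, a function of $\dim X$ and $\rank \cE$. Since $M^{(r)}_\lambda([\cQ])$ is a polynomial of degree $|\lambda|$ in these classes, it vanishes once $|\lambda|$ exceeds that bound; as $\ell(\lambda) \le r \le \rank \cE$ this in particular bounds $\lambda_1$, so the sum defining $\theta_r$ is finite and $\Theta_M = \sum_r \theta_r(c_r)$ is a genuine polynomial in the $s_n$'s and $\sigma_n$'s.

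The hard part, I expect, is the bookkeeping in the passage from the formal identity to the geometric one: one must make the generating-function manipulation, the splitting principle, and the infinite sum over $\lambda$ on the Schur-function side cohere --- in particular checking that the left-hand sum converges in the degree completion of $\Lambda \otimes \rK(X)$ while the right-hand sum is genuinely finite, so that the claimed equality is between well-defined elements and not a merely symbolic manipulation. The nilpotence input underlying the finiteness is standard, but must invoke the ampleness hypothesis correctly.
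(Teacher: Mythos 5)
Your proposal is correct and takes essentially the same route as the paper: the generating-function identity you isolate, proved via Cauchy and the substitution $x_i \mapsto 1+y_i$, is exactly the content of Lemmas~\ref{lem:fchar1} and~\ref{lem:fchar2}, which compute $\Theta'_{\bA(\cQ)}$ by the same binomial expansion on a single line bundle followed by a product over the splitting. The reduction to generators, the push-forward step, and the finiteness argument via nilpotence of $[\cL_i]-1$ above $\dim\mathbf{Flag}(\cQ)$ all match the paper's proof.
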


Before proving the theorem, we note a few corollaries. First, in Lemma~\ref{lem:theta-inj} below, we show that $\theta_r$ is injective. Since everything in the image of $\theta_r$ has total degree $r$ in the $\sigma$'s, it follows that the images of the $\theta_r$'s are linearly independent. We conclude:

\begin{corollary}
The map
\begin{displaymath}
\Theta \colon \rK(\bA(\cE)) \to \wt{\Lambda} \otimes \rK(X)
\end{displaymath}
is injective. Thus if $M$ and $N$ are two finitely generated $\bA(\cE)$-modules then $[M]=[N]$ in $\rK(\bA(\cE))$ if and only if $[M_{\lambda}]=[N_{\lambda}]$ in $\rK(X)$ for all $\lambda$.
\end{corollary}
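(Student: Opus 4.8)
The plan is to read both assertions off from the three preceding results: the decomposition $\rK(\bA(\cE)) \cong \bigoplus_{r=0}^{\rank \cE} \Lambda \otimes \rK(\Gr_r(\cE))$ of Theorem~\ref{thm:groth}, the formula $\Theta_M = \sum_r \theta_r(c_r)$ of Theorem~\ref{thm:fchar}, and the injectivity of each $\theta_r$ from Lemma~\ref{lem:theta-inj}. One should first note that $\Theta$ descends to $\rK(\bA(\cE))$ at all: the functor $M \mapsto M_\lambda$ on $\cV_X^{\dfg}$ (extracting the multiplicity space of $\bS_\lambda$) is exact, so $[M_\lambda]$ depends only on $[M]$. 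Granting this, Theorem~\ref{thm:fchar} says precisely that, under the identification of Theorem~\ref{thm:groth}, $\Theta$ becomes the assembled map $(c_r)_r \mapsto \sum_r \theta_r(c_r)$, so it suffices to prove that the $\theta_r$ together give an injection $\bigoplus_r \Lambda \otimes \rK(\Gr_r(\cE)) \hookrightarrow \wt\Lambda \otimes \rK(X)$.

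For that I would grade $\wt\Lambda \otimes \rK(X)$ by total degree in the variables $\sigma_0, \sigma_1, \dots$ and observe, from the definition of $\theta_r$, that every monomial appearing in its image has the form $s_\mu\, \sigma^\lambda\, \sigma_0^{\,r - \ell(\lambda)}$ with $\ell(\lambda) \le r$, hence $\sigma$-degree $\ell(\lambda) + (r - \ell(\lambda)) = r$; thus the image of $\theta_r$ lies entirely in the $\sigma$-degree-$r$ piece. Since each $\theta_r$ is injective (Lemma~\ref{lem:theta-inj}) and their images sit in pairwise distinct graded pieces, any relation $\sum_r \theta_r(c_r) = 0$ forces $\theta_r(c_r) = 0$, and hence $c_r = 0$, for every $r$. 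This establishes the injectivity of $\Theta$, which is the first assertion.

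The second assertion is then a matter of unwinding: $\Theta_M = \sum_\lambda [M_\lambda]\, s_\lambda$, and the Schur functions $s_\lambda$ (all of $\sigma$-degree $0$) form part of a $\bZ$-basis of $\wt\Lambda$, so $\Theta_M = \Theta_N$ is equivalent to $[M_\lambda] = [N_\lambda]$ in $\rK(X)$ for all $\lambda$; combined with the injectivity of $\Theta$ this gives $[M] = [N] \iff [M_\lambda] = [N_\lambda]$ for all $\lambda$. The one point I would pause on is that $\Theta_M$, a priori only an infinite series in the completion $\widehat{\Lambda}$ of $\Lambda$ with coefficients in $\rK(X)$, really does determine and is determined by a genuine element of the polynomial ring $\wt\Lambda \otimes \rK(X)$ — equivalently, that the $s_n$'s and $\sigma_k$'s are algebraically independent, so that the natural map $\wt\Lambda \to \widehat{\Lambda}$ is injective. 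This follows by extending the argument of the Lemma above: passing to $\widehat{\Lambda}_+ / \widehat{\Lambda}_+^2$, the differentials $\sum_{m \ge k} \tfrac1m \binom mk \bar p_m$ of the $\sigma_k$, together with the $\bar p_n$ coming from the $s_n$, are linearly independent, because an identity $\sum_k b_k \binom mk \equiv 0$ in the variable $m$ forces all $b_k = 0$. I do not expect any serious obstacle here — essentially all of the content is packaged into Theorem~\ref{thm:fchar} and Lemma~\ref{lem:theta-inj}, which are taken as given — so the only things requiring care are the $\sigma$-degree bookkeeping above and this (elementary) algebraic-independence check, the latter being the step most easily overlooked.
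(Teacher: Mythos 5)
Your proof of the first assertion (injectivity of $\Theta$) is essentially identical to the paper's: read off the decomposition of $\rK(\bA(\cE))$ from Theorem~\ref{thm:groth}, use $\Theta = \sum_r \theta_r$ from Theorem~\ref{thm:fchar}, invoke Lemma~\ref{lem:theta-inj} for injectivity of each $\theta_r$, and observe that the images of the $\theta_r$ are linearly independent because they live in distinct graded pieces under $\sigma$-degree. This is exactly what the paper does.

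What you add beyond the paper is the remark about whether $\wt\Lambda \to \wh\Lambda$ is injective, and this is a genuinely good catch. The ``if'' direction of the second assertion requires it: the hypothesis $[M_\lambda] = [N_\lambda]$ for all $\lambda$ gives only $\Theta_M = \Theta_N$ as a power series in $\wh\Lambda \otimes \rK(X)$, and one then needs to pull this back to an equality in $\wt\Lambda \otimes \rK(X)$ before the injectivity established in the first assertion (which is injectivity \emph{into $\wt\Lambda$}) can be applied. The paper's Lemma establishes algebraic independence of $\sigma_0, \sigma_1, \dots$ alone, not their joint algebraic independence with the $s_n$'s, which is what ``$\wt\Lambda$ is the polynomial ring in the $s_n$'s and $\sigma_n$'s, realized inside $\wh\Lambda$'' really asserts — so the paper does leave this point implicit. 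Your differential argument, computing images in $\wh\Lambda_+/\wh\Lambda_+^2$ in terms of the $\bar p_m$ and using that $\binom{m}{k}$ for varying $k$ give independent polynomials in $m$, is correct and does close the gap (in characteristic 0, a finite set of elements of the maximal ideal of a power series ring with linearly independent images in the cotangent space are algebraically independent, by the usual Jacobian-type argument with the derivations $\partial/\partial p_j$). An alternative, avoiding any appeal to differentials, is a ``separation of variables'' argument: a putative relation involves only finitely many $s_n$ (say $n \le N$) and $\sigma_k$ (say $k \le M$); splitting $\sigma_k = \sigma_k^{(N)} + \tau_k$ with $\sigma_k^{(N)} \in \bQ[s_1,\dots,s_N]$ and $\tau_k \in \bQ[[s_{N+1},\dots]]$, one observes that the $\tau_k$ are algebraically independent over $\bQ$ by the shift of indices, and are transcendental over $\bQ[s_1,\dots,s_N]$ since they involve disjoint variables; picking off the coefficient of $\tau^\alpha$ for $\alpha$ maximal then kills the relation coefficientwise.

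So: correct, same route as the paper for the core argument, plus a careful treatment of a point the paper passes over silently.
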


Let $F^{\bullet} \rK(\Gr_r(\cE))$ be the filtration by codimension of support. We also consider the topological filtration $F_{\rm top}^\bullet \rK'(\Gr_r(\cE))$ on $\rK'(\Gr_r(\cE))$ which is a ring filtration (see \cite[\S V.3]{fulton-lang}). In particular, $M_\lambda^{(r)}([\cQ]) \in F^{|\lambda|}_{\rm top}\rK'(\Gr_r(\cE))$; by definition, given a subvariety of dimension $< |\lambda|$, $M_\lambda^{(r)}([\cQ])$ can be represented by a complex whose homology is $0$ along that subvariety. It follows that $\langle M_\lambda^{(r)}([\cQ]), [\cF] \rangle=0$ if $\cF$ has support dimension $<\vert \lambda \vert$. 

For the following, let $\Mod_{A,\le r}$ be the subcategory of $A$-modules which are locally annihilated by powers of the $r$th determinantal ideal $\fa_r$, and let $T_{>r} \colon \Mod_A \to \Mod_A / \Mod_{A,\le r}$ be the localization functor. It admits a section functor (right adjoint) $S_{>r} \colon \Mod_A/\Mod_{A,\le r} \to \Mod_A$ and we set $\Sigma_{>r} = S_{>r} \circ T_{>r}$. This is a left exact functor, and induces a functor $\rR \Gamma_{>r}$ on $\rD(A)$. Next, we define $\Gamma_{\le r}$ to be the functor that assigns a module $M$ to the maximal submodule which is locally annihilated by $\fa_r$. This is also left exact and induces a functor $\rR \Gamma_{\le r}$ on $\rD(A)$. Finally, we let $\rD^b_\fgen(A)_r$ be the full subcategory of $\rD^b_\fgen(A)$ of objects which are sent to $0$ by $\rR\Gamma_{\le r-1}$ and $\rR \Sigma_{> r}$. See \cite[\S 6.1]{symu1} for more details.

\begin{corollary} \label{cor:dim-bound}
Suppose $M \in \rD^b_{\fgen}(\bA(\cE))_r$ and $[M]$ belongs to $\Lambda \otimes F^k \rK(\Gr_r(\cE))$. Then $\Theta_M$ has the form $\sum_{\vert \lambda \vert \le k} c_{\lambda} \sigma_0^{r-\ell(\lambda)} \sigma^{\lambda}$, where $c_{\lambda} \in \Lambda \otimes \rK(X)$.
\end{corollary}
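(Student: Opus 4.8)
The plan is to combine Theorem~\ref{thm:fchar} with the two geometric observations recorded just above Corollary~\ref{cor:dim-bound}: that $M^{(r)}_\lambda([\cQ])$ lies in the topological filtration piece $F^{|\lambda|}_{\rm top}\rK'(\Gr_r(\cE))$, and that the pairing $\langle M^{(r)}_\lambda([\cQ]), [\cF]\rangle$ vanishes when $[\cF]$ lies deep enough in the codimension filtration. First I would observe that the hypothesis $M \in \rD^b_\fgen(\bA(\cE))_r$ forces the expansion $[M] = \sum_s c_s$ from Theorem~\ref{thm:groth} to be concentrated in a single term $c_r \in \Lambda \otimes \rK(\Gr_r(\cE))$; this is essentially the content of the structure theory in \cite[\S 6.1]{symu1}, where $\rD^b_\fgen(A)_r$ is precisely the category detecting ``pure codimension $r$'' behavior, and I would cite that rather than reprove it. Hence by Theorem~\ref{thm:fchar}, $\Theta_M = \theta_r(c_r)$.

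Next I would unwind the definition of $\theta_r$. Writing $c_r = \sum_\mu s_\mu \otimes [\cF_\mu]$ with each $[\cF_\mu] \in F^k\rK(\Gr_r(\cE))$ (using the hypothesis that $[M]$, and therefore $c_r$ after applying the isomorphism of Theorem~\ref{thm:groth}, lands in $\Lambda \otimes F^k\rK(\Gr_r(\cE))$), we get
\begin{displaymath}
\Theta_M = \sum_\mu s_\mu \sum_{\ell(\lambda) \le r} \sigma^\lambda \sigma_0^{r - \ell(\lambda)} \langle M^{(r)}_\lambda([\cQ]), [\cF_\mu]\rangle.
\end{displaymath}
The key point is that $\langle M^{(r)}_\lambda([\cQ]), [\cF_\mu]\rangle = 0$ whenever $|\lambda| > k$: since $M^{(r)}_\lambda([\cQ]) \in F^{|\lambda|}_{\rm top}\rK'(\Gr_r(\cE))$ and the pairing $\langle -, -\rangle$ respects the filtrations in the sense that $\langle F^a_{\rm top}\rK', F^b\rK\rangle \subseteq F^{a+b-\dim \Gr_r(\cE)}\rK(X)$ — or more directly, by the support-dimension argument already spelled out in the text, $M^{(r)}_\lambda([\cQ])$ can be represented by a complex acyclic off a subvariety of dimension $< |\lambda|$ while $[\cF_\mu]$ is supported in dimension $\le \dim\Gr_r(\cE) - k < |\lambda|$ once $|\lambda| > k$ — the relevant derived pushforward vanishes. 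Therefore only terms with $|\lambda| \le k$ survive, and collecting the coefficient of $\sigma_0^{r-\ell(\lambda)}\sigma^\lambda$ over all $\mu$ gives an element $c_\lambda = \sum_\mu s_\mu \langle M^{(r)}_\lambda([\cQ]), [\cF_\mu]\rangle \in \Lambda \otimes \rK(X)$, yielding the asserted form $\Theta_M = \sum_{|\lambda| \le k} c_\lambda \sigma_0^{r-\ell(\lambda)}\sigma^\lambda$.

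The main obstacle I anticipate is the bookkeeping around the two filtrations and their compatibility with the pairing and with the isomorphism of Theorem~\ref{thm:groth}. One has to check that the topological filtration $F^\bullet_{\rm top}\rK'(\Gr_r(\cE))$ (a ring filtration, by \cite[\S V.3]{fulton-lang}) interacts correctly with the codimension filtration $F^\bullet\rK(\Gr_r(\cE))$ under $\langle -, -\rangle$, and that passing $[M] \in \Lambda \otimes F^k\rK(\Gr_r(\cE))$ through $i_r^{-1}$ really does land $c_r$ in the expected filtration level — this is where I would lean on the precise statements in \cite[\S 6.1, \S 6.19]{symu1}. The rest, including the vanishing $\langle M^{(r)}_\lambda([\cQ]), [\cF]\rangle = 0$ for $|\lambda| > \dim(\operatorname{supp}\cF)$, is already recorded in the paragraph preceding the corollary, so the proof should be short once that input is in place.
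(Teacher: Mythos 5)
Your overall strategy is exactly the intended one: the hypotheses place $[M]$ in the summand $\Lambda\otimes \rK(\Gr_r(\cE))$ so $\Theta_M = \theta_r(c_r)$ by Theorem~\ref{thm:fchar}, and then the vanishing $\langle M^{(r)}_\lambda([\cQ]),[\cF]\rangle = 0$ for $\cF$ of small support kills the terms with $\vert\lambda\vert$ large. However, the step where you pass from the hypothesis $c_r \in \Lambda\otimes F^k\rK(\Gr_r(\cE))$ to the vanishing is not correct as written. You assert that $[\cF_\mu]$ is ``supported in dimension $\le \dim\Gr_r(\cE) - k < \vert\lambda\vert$ once $\vert\lambda\vert > k$.'' The second inequality is a non sequitur: $\vert\lambda\vert > k$ does not imply $\vert\lambda\vert > \dim\Gr_r(\cE) - k$. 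You appear to be reading $F^k$ as the codimension-$\ge k$ filtration; but with that reading you would only be able to conclude that $\Theta_M$ is supported on $\vert\lambda\vert \le \dim\Gr_r(\cE) - k$, which is not what the corollary claims.

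The reading that makes the corollary (and the surrounding discussion) work is that $F^k\rK(\Gr_r(\cE))$ is the subgroup generated by sheaves whose support has \emph{dimension} $\le k$; this is also what the sentence ``the degree of $\Theta_M$ is related to the support dimension of $M$ on $\Gr_r(\cE)$'' after the corollary is telling you. With that reading, the inequality is immediate: if $\vert\lambda\vert > k$ then $\dim\operatorname{supp}\cF_\mu \le k < \vert\lambda\vert$, so the preceding paragraph gives $\langle M^{(r)}_\lambda([\cQ]),[\cF_\mu]\rangle = 0$, and only $\vert\lambda\vert\le k$ survives in $\theta_r(c_r)$. (Your alternative filtration-compatibility formula $\langle F^a_{\rm top}, F^b\rangle\subseteq F^{a+b-\dim Y}$ suffers from the same index mismatch and would also give $\dim Y - k$ rather than $k$.) One further small slip: the representing complex for $M^{(r)}_\lambda([\cQ])$ is acyclic \emph{along} a given subvariety of dimension $<\vert\lambda\vert$, not acyclic \emph{off} it; the vanishing argument needs the complex to be exact on the support of $\cF_\mu$. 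With these two corrections the proof is complete and matches the paper's.
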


The point of the corollary is that, if one gives $\sigma_n$ degree $n$, then the degree of $\Theta_M$ is related to the support dimension of $M$ on $\Gr_r(\cE)$. Of course, if $M$ is not in $\rD^b_{\fgen}(\bA(\cE))_r$ then one can apply the corollary separately to each projection $\rR \Sigma_{\ge r} \rR \Gamma_{\le r}(M)$.

We now begin with the proof of the theorem. We start with a few lemmas. If $V \in \cV_X^{\dfg}$ is $\cO_X$-flat, we let $\Theta'_V$ be defined like $\Theta_V$ except we use the class of $V_{\lambda}$ in $\rK'(X)$. Under the natural map $\rK'(X) \to \rK(X)$, we have that $\Theta'_V$ maps to $\Theta_V$. Thus $\Theta'_V$ contains more information than $\Theta_V$, when it is defined. We note that if $V, W \in \cV_X^{\dfg}$ and $V$ is $\cO_X$-flat then $\Theta_{V \otimes W}=\Theta'_V \cdot \Theta_W$, where the multiplication on the right side uses the $\rK'(X)$-module structure on $\rK(X)$.

\begin{lemma} \label{lem:fchar1}
Let $\cL$ be a line bundle on $Y$. Let $\ell=[\cL]-1 \in \rK'(Y)$ and $d=\dim(Y)$. Then
\begin{displaymath}
\Theta'_{\bA(\cL)} = \sum_{m=0}^d \ell^m \sigma_m.
\end{displaymath}
\end{lemma}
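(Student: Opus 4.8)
The plan is to compute $\Theta'_{\bA(\cL)}$ directly from the definition, since $\bA(\cL)$ is a very explicit object: as a polynomial functor it sends $V$ to $\Sym(\cL \otimes V) = \bigoplus_{n \ge 0} \Sym^n(\cL \otimes V) = \bigoplus_{n \ge 0} \cL^{\otimes n} \otimes \Sym^n(V)$. Since $\Sym^n = \bS_{(n)}$, this means the only partitions $\lambda$ that appear in the Schur functor decomposition of $\bA(\cL)$ are the single rows $(n)$, and the multiplicity space of $\bS_{(n)}(\bV)$ is the line bundle $\cL^{\otimes n}$. Hence by definition
\begin{displaymath}
\Theta'_{\bA(\cL)} = \sum_{n \ge 0} [\cL^{\otimes n}] \, s_n = \sum_{n \ge 0} [\cL]^n s_n,
\end{displaymath}
where $[\cL]^n \in \rK'(Y)$ makes sense because $\cL$ is a line bundle (this is the easy case of the splitting principle noted in the remark).

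Next I would substitute $[\cL] = 1 + \ell$ and expand, using the fact that $\rK'(Y)$ is graded by the topological filtration with $\ell \in F^1_{\mathrm{top}}$, so $\ell^{d+1} = 0$ since $\dim Y = d$ (powers of a codimension-one class vanish past the dimension). This gives
\begin{displaymath}
\Theta'_{\bA(\cL)} = \sum_{n \ge 0} (1+\ell)^n s_n = \sum_{n \ge 0} \sum_{m=0}^{n} \binom{n}{m} \ell^m s_n = \sum_{m=0}^{d} \ell^m \sum_{n \ge m} \binom{n}{m} s_n = \sum_{m=0}^d \ell^m \sigma_m,
\end{displaymath}
where in the second-to-last step I swap the order of summation and drop the terms with $m > d$ since $\ell^m = 0$ there, and in the last step I recognize $\sum_{n \ge m} \binom{n}{m} s_n = \sigma_m$ by the definition of $\sigma_m$. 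This is exactly the claimed formula.

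The only genuine subtlety — and the step I expect to need the most care — is justifying that $\ell^m = 0$ for $m > d$ so that the infinite sum over $n$ collapses to a polynomial; this rests on the standard fact (see \cite[\S V.3]{fulton-lang}) that the topological filtration on $\rK'(Y)$ for $Y$ of dimension $d$ has $F^{d+1}_{\mathrm{top}} = 0$, combined with $[\cL] - 1 \in F^1_{\mathrm{top}}$ since a line bundle minus the trivial bundle is supported (at the level of Chern classes / the associated graded) in codimension $\ge 1$. Everything else is a bookkeeping identity with binomial coefficients. I would also remark, if needed downstream, that applying the natural map $\rK'(Y) \to \rK(Y)$ recovers the corresponding statement for $\Theta_{\bA(\cL)}$, consistent with the general compatibility $\Theta'_V \mapsto \Theta_V$ noted before the lemma.
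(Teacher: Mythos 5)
Your proof is correct and follows essentially the same route as the paper: decompose $\bA(\cL)$ as $\bigoplus_n \cL^{\otimes n}\otimes \Sym^n$, substitute $[\cL]=1+\ell$, expand binomially, and use $\ell^{d+1}=0$ (the paper cites \cite[Corollary V.3.10]{fulton-lang} for exactly this) to truncate the sum and identify the $\sigma_m$'s. The only difference is that you spell out the justification for $\ell^{d+1}=0$ in more detail, which is fine but not a new idea.
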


\begin{proof}
By \cite[Corollary V.3.10]{fulton-lang}, $\ell^{d+1}=0$. We have 
\begin{displaymath}
\Theta'_{\bA(\cL)}
= \sum_{n \ge 0} [\cL]^n s_n
= \sum_{n \ge 0} (\ell+1)^n s_n
= \sum_{n \ge 0} \sum_{m=0}^d \binom{n}{m} \ell^m s_n
= \sum_{m=0}^d \ell^m \left[ \sum_{n \ge m} \binom{n}{m} s_n \right]. \qedhere
\end{displaymath}
\end{proof}

\begin{lemma} \label{lem:fchar2}
Let $\cQ$ be a locally free coherent sheaf of rank~$r$ on $Y$. Then
\begin{displaymath}
\Theta'_{\bA(\cQ)} = \sum_{\substack{\ell(\lambda) \le r}} M^{(r)}_{\lambda}([\cQ]) \sigma^{\lambda} \sigma_0^{r-\ell(\lambda)}.
\end{displaymath}
\end{lemma}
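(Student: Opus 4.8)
I would deduce the formula from the line-bundle case (Lemma~\ref{lem:fchar1}) by the splitting principle, exploiting that $\bA$ turns direct sums into tensor products and that $\Theta'$ is multiplicative. The first step is to record the relevant functoriality of the assignment $\cE \mapsto \Theta'_{\bA(\cE)}$. By the Cauchy identity, the $\bS_{\mu}$-isotypic piece of $\bA(\cE)$ is $\bS_{\mu}(\cE)$, which is locally free; hence $\Theta'_{\bA(\cE)}$ is defined and equals $\sum_{\mu} [\bS_{\mu}(\cE)]\, s_{\mu}$. In particular $\Theta'_{\bA(\cE)}$ depends only on $[\cE] \in \rK'(Y)$, it is compatible with pullback along any morphism $Y' \to Y$, and, since $\bA(\cE_1 \oplus \cE_2) = \bA(\cE_1) \otimes \bA(\cE_2)$ and $\Theta'$ is multiplicative for tensor products (the primed analogue of the identity noted before Lemma~\ref{lem:fchar1}, using the Littlewood--Richardson rule and $[\bS_{\mu}(\cE_1 \oplus \cE_2)] = \sum_{\nu,\rho} c^{\mu}_{\nu\rho}[\bS_{\nu}(\cE_1)][\bS_{\rho}(\cE_2)]$ in $\rK'$), we have $\Theta'_{\bA(\cE_1 \oplus \cE_2)} = \Theta'_{\bA(\cE_1)} \cdot \Theta'_{\bA(\cE_2)}$.

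\textbf{Reduction to line bundles and the product computation.} Next I would pass to the relative flag variety $\pi \colon {\bf Flag}(\cQ) \to Y$, on which $\pi^*\cQ$ carries a filtration with line-bundle quotients $\cL_1, \dots, \cL_r$, so that $[\pi^*\cQ] = \sum_i [\cL_i]$ in $\rK'({\bf Flag}(\cQ))$. Setting $\ell_i = [\cL_i] - 1$, the functoriality above together with Lemma~\ref{lem:fchar1} gives
\[
\pi^* \Theta'_{\bA(\cQ)} = \Theta'_{\bA(\pi^*\cQ)} = \prod_{i=1}^r \Theta'_{\bA(\cL_i)} = \prod_{i=1}^r \Bigl( \sum_{m \ge 0} \ell_i^m \sigma_m \Bigr),
\]
the sums being finite as the $\ell_i$ are nilpotent. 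Expanding the product, a tuple of exponents $(a_1, \dots, a_r) \in \bZ_{\ge 0}^r$ contributes $\bigl(\prod_i \ell_i^{a_i}\bigr)\bigl(\prod_i \sigma_{a_i}\bigr)$; if $\lambda$ is the partition formed by the nonzero $a_i$'s then $\prod_i \sigma_{a_i} = \sigma_0^{r - \ell(\lambda)} \sigma^{\lambda}$ depends only on $\lambda$, while summing $\prod_i \ell_i^{a_i}$ over all tuples realizing a fixed $\lambda$ yields exactly the monomial symmetric polynomial $m^{(r)}_{\lambda}(\ell_1, \dots, \ell_r) = M^{(r)}_{\lambda}([\cL_1], \dots, [\cL_r])$. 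Hence $\pi^* \Theta'_{\bA(\cQ)} = \sum_{\ell(\lambda) \le r} M^{(r)}_{\lambda}([\cL_1], \dots, [\cL_r])\, \sigma^{\lambda} \sigma_0^{r - \ell(\lambda)}$. Finally I would apply $\rR\pi_*$ coefficientwise in the $\sigma_n$'s: since ${\bf Flag}(\cQ) \to Y$ is a flag bundle, $\rR\pi_* \pi^* = \mathrm{id}$, so the left side returns $\Theta'_{\bA(\cQ)}$, and by the very definition of $M^{(r)}_{\lambda}([\cQ])$ in the splitting-principle remark the right side becomes $\sum_{\ell(\lambda) \le r} M^{(r)}_{\lambda}([\cQ])\, \sigma^{\lambda} \sigma_0^{r - \ell(\lambda)}$, as claimed.

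\textbf{Main obstacle.} The computation itself is routine; the only thing requiring care is the foundational bookkeeping needed to make the splitting-principle argument legitimate: namely, that $\Theta'_{\bA(-)}$ behaves as a genuine characteristic class (determined by a universal computation in Chern roots, compatible with pullback, and commuting coefficientwise with $\rR\pi_*$ for the flag bundle), and that the various a priori infinite sums involved---over $m$, over partitions $\lambda$, and in the power-series variables $s_n$---are legitimate, which holds because $\ell_i$ and $M^{(r)}_{\lambda}([\cQ])$ are nilpotent in $\rK'$. Once these points are settled, the combinatorial expansion in the second step is the substantive part and is entirely mechanical.
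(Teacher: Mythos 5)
Your proof is correct and follows essentially the same route as the paper: reduce to line bundles via the splitting principle, use Lemma~\ref{lem:fchar1} and the multiplicativity $\Theta'_{\bA(\cE_1\oplus\cE_2)}=\Theta'_{\bA(\cE_1)}\Theta'_{\bA(\cE_2)}$, and expand the resulting product to recognize $m^{(r)}_\lambda(\ell_1,\dots,\ell_r)\,\sigma^\lambda\sigma_0^{r-\ell(\lambda)}$. The only difference is that you make explicit the flag-bundle mechanics ($\pi^*$, $\rR\pi_*\pi^*=\mathrm{id}$, compatibility with the remark defining $M^{(r)}_\lambda([\cQ])$) that the paper compresses into the phrase ``by the splitting principle, we may assume $[\cQ]=[\cL_1]+\cdots+[\cL_r]$.''
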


\begin{proof}
By the splitting principle, we may assume $[\cQ]=[\cL_1]+\cdots+[\cL_r]$ for line bundles $\cL_i$. Put $\ell_i=[\cL_i]-1$. Since $[\bA(\cQ)] = \prod_{i=1}^r [\bA(\cL_i)]$, we use Lemma~\ref{lem:fchar1} to get
\begin{displaymath}
\Theta'_{\bA(\cQ)}
= \prod_{i=1}^r \Theta'_{\bA(\cL_i)} 
= \prod_{i=1}^r \left[ \sum_{n=0}^d \ell_i^n \sigma_n \right]
= \sum_{\substack{\lambda_1 \le d,\\ \ell(\lambda) \le r}} m^{(r)}_{\lambda}(\ell_1, \ldots, \ell_r) \sigma^{\lambda} \sigma_0^{r-\ell(\lambda)}. 
\end{displaymath}
Since $\ell_i^{d+1}=0$, the bound $\lambda_1 \le d$ in the index for the sum is superfluous. We note that $m_{\lambda}^{(r)}(\ell_1, \ldots, \ell_r)=M_{\lambda}^{(r)}([\cQ])$, by definition.
\end{proof}

\begin{proof}[Proof of Theorem~\ref{thm:fchar}]
Fix $r$, let $Y=\Gr_r(\cE)$, let $\pi \colon Y \to X$ be the structure map, and let $\cQ$ be the tautological bundle on $Y$. By \cite[Corollary~6.16]{symu1}, it suffices to prove the result for $M=\rR \pi_*(V \otimes \bA(\cQ))$ with $V \in \cV_Y^{\fgen}$. In fact, since everything in sight is $\cV$- or $\Lambda$- linear, it suffices to treat the case $V \in \Mod_Y^{\fgen}$. By Lemma~\ref{lem:fchar2}, we have 
\begin{displaymath}
\Theta_{V \otimes \bA(\cQ)} = \Theta_V \cdot \Theta'_{\bA(\cQ)} = \sum_{\substack{\ell(\lambda) \le r}} M^{(r)}_{\lambda}([\cQ]) [V] \sigma^{\lambda} \sigma_0^{r-\ell(\lambda)}.
\end{displaymath}
Since formation of formal characters is compatible with derived pushforward, we find
\begin{displaymath}
\Theta_M = \rR \pi_*(\Theta_{V \otimes \bA(\cQ)}) = \sum_{\substack{\ell(\lambda) \le r}} \langle M^{(r)}_{\lambda}([\cQ]), [V] \rangle \sigma^{\lambda} \sigma_0^{r-\ell(\lambda)}.
\end{displaymath}
Now, in the decomposition $[M]=\sum_{r=0}^{\rank \cE} c_r$ with $c_r \in \Lambda \otimes \rK(\Gr_r(\cE))$, we have $c_i=0$ for $i \ne r$ and $c_r=[V]$. Thus the above is exactly equal to $\sum_{r=0}^{\rank \cE} \theta_r(c_r)$, which proves the theorem.
\end{proof}

\begin{lemma} \label{lem:theta-inj}
The map $\theta_r$ is injective.
\end{lemma}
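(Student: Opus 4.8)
The plan is to show that $\theta_r$ is injective by producing a left inverse, i.e. a way to recover $s_\mu \otimes [\cF]$ from its image. The key observation is that $\theta_r(s_\mu \otimes [\cF]) = s_\mu \cdot \sum_{\ell(\lambda) \le r} \sigma^\lambda \sigma_0^{r-\ell(\lambda)} \langle M_\lambda^{(r)}([\cQ]), [\cF] \rangle$, and since the $\sigma_n$'s (for $n \ge 0$) are algebraically independent over $\bQ$ by the lemma proved above, and the $s_n$'s together with the $\sigma_n$'s freely generate $\wt\Lambda$, the coefficient extraction of each monomial $\sigma^\lambda \sigma_0^{r-\ell(\lambda)}$ (times $s_\mu$) is well-defined. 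So the problem reduces to: knowing all the pairings $\langle M_\lambda^{(r)}([\cQ]), [\cF] \rangle \in \rK(X)$ for $\ell(\lambda) \le r$, can we recover $[\cF] \in \rK(\Gr_r(\cE))$? Equivalently, do the classes $M_\lambda^{(r)}([\cQ])$, as $\lambda$ ranges over partitions with at most $r$ parts, generate $\rK'(\Gr_r(\cE))$ as a module over... well, we need them to span enough of $\rK'(Y)$ that the pairing $\langle -, [\cF]\rangle$ against them detects $[\cF]$.

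First I would recall the standard structure of $\rK(\Gr_r(\cE))$: since $\Gr_r(\cE) \to X$ is a Grassmannian bundle, $\rK(\Gr_r(\cE))$ is a free module over $\rK(X)$ (and $\rK'$ likewise) with a basis given by Schur polynomials $s_\nu([\cQ])$ for $\nu$ ranging over partitions fitting in an $r \times (\rank\cE - r)$ box — this is the projective bundle / Grassmannian bundle formula, e.g. in \cite{fulton}. Moreover the $M_\lambda^{(r)}([\cQ])$ for $\ell(\lambda) \le r$ are, up to an upper-triangular change of basis (with respect to the degree filtration, since $M_\lambda^{(r)}(x_1,\dots,x_r) = m_\lambda^{(r)}(x_1 - 1, \dots)$ differs from $m_\lambda^{(r)}$ by lower-degree terms, and the $m_\lambda^{(r)}$ are related to the $s_\nu$ by an invertible integer matrix), a spanning set for the subring of $\rK'(Y)$ generated by $[\cQ]$. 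The issue is that there are infinitely many $\lambda$ with $\ell(\lambda)\le r$ but only finitely many box partitions $\nu$; however, once $\lambda_1$ exceeds $\rank\cE - r$, the class $M_\lambda^{(r)}([\cQ])$ lies in the image of higher pieces of the topological filtration, and in fact (as the paper notes right before Corollary~\ref{cor:dim-bound}) these vanish against classes of small support dimension — but here we want the opposite: we want enough of them to be nonzero/independent.

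The cleanest route: reduce to the case $X$ a point first if helpful, but actually work directly. Consider the pairing $\rK'(Y) \times \rK(Y) \to \rK(X)$, $\langle [\cE'], [\cF]\rangle = [\rR\pi_*(\cE' \otimes \cF)]$. By relative Serre duality (or just the projection formula plus the Grassmannian bundle formula), this pairing is perfect over $\rK(X)$ when restricted to the box-partition basis: there is a dual basis. Concretely, $\langle s_\nu([\cQ]), s_{\nu'}([\cR^\vee]) \cdot (\text{correction})\rangle$ is a permutation matrix by the push-pull formula on Grassmannian bundles. So it suffices to show that the $\bQ$-span (or $\rK'(X)$-span) of $\{M_\lambda^{(r)}([\cQ]) : \ell(\lambda) \le r\}$ contains all $s_\nu([\cQ])$ with $\nu$ in the box. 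Since $M_\lambda^{(r)}([\cQ])$ for $\ell(\lambda)\le r$ and $\lambda_1 \le \rank\cE - r$ already form such a spanning set by the upper-triangularity remark above (the substitution $x_i \mapsto x_i - 1$ is unipotent upper-triangular on the degree filtration, and $\{m_\lambda^{(r)}\}$ is a basis of symmetric polynomials in $r$ variables, restricting to a basis of the quotient ring $\rK'(Y)/(\rK'(X)$-torsion) once we cap $\lambda_1$), we conclude that pairing against the $M_\lambda^{(r)}([\cQ])$'s separates elements of $\rK(Y)$, hence $\theta_r$ is injective.

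The main obstacle I anticipate is bookkeeping the relationship between the three filtrations/gradings in play: the degree grading on symmetric polynomials, the topological filtration $F^\bullet_{\rm top}\rK'(Y)$, and the codimension-of-support filtration $F^\bullet\rK(Y)$ — and making sure that the ``infinitely many $\lambda$ but finitely many box partitions'' discrepancy is genuinely harmless, i.e. that the extra $M_\lambda^{(r)}([\cQ])$ with $\lambda_1$ large do not cause collapse but merely give redundant (dependent) equations. I expect this is handled exactly by the Grassmannian bundle presentation $\rK'(Y) = \rK'(X)[c_1,\dots,c_r]/(\text{relations})$ where the relations kick in precisely past the box, so that the map $\Lambda^{\le r} \to \rK'(Y)$ sending $m_\lambda^{(r)} \mapsto m_\lambda^{(r)}([\cQ])$ is surjective with kernel exactly the ideal of relations — surjectivity is all we need. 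If a fully clean argument over a general base $X$ is awkward, a fallback is to note that injectivity of a $\rK(X)$-linear map can be checked after faithfully flat base change or by a filtration/associated-graded argument reducing to $X = \Spec \bC$, where everything is classical Schubert calculus.
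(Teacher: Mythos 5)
Your plan is essentially the paper's proof: extract coefficients using the freeness of $\wt\Lambda$ over $\rK(X)$, pass from the $M_\lambda^{(r)}([\cQ])$ to the Schur classes $[\bS_\lambda(\cQ)]$ by the unipotent (degree-triangular) change of basis, and then invoke the perfect pairing between $\rK'(\Gr_r(\cE))$ and $\rK(\Gr_r(\cE))$ over $\rK(X)$ — the ``(correction)'' you wave at is exactly the transpose $\lambda\mapsto\lambda^\dagger$, giving $\langle[\bS_{\lambda^\dagger}(\cR^*)\otimes\pi^*\cG],[\bS_\mu(\cQ)]\rangle=\delta_{\lambda,\mu}[\cG]$, which the paper cites from \cite[Corollary~A.3]{symu1}. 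One minor misstatement: it is not true that $M_\lambda^{(r)}([\cQ])$ for $\lambda_1\le\rank\cE-r$ alone suffice, since the lower-degree terms produced by the substitution $x_i\mapsto x_i-1$ generally have first part exceeding $d-r$; but this is harmless because your vanishing hypothesis holds for every $\lambda$ with $\ell(\lambda)\le r$, and after the triangular change of basis you obtain $\langle[\bS_\nu(\cQ)],[\cF]\rangle=0$ for all box partitions $\nu$, which is all the duality argument needs.
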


\begin{proof}
By definition of $\wt{\Lambda}$, if $\sum_i s_{\mu^i} \otimes [\cF_i]$ maps to $0$, then $\langle M^{(r)}_\lambda([\cQ]), [\cF] \rangle = 0$ for all $\lambda$ with $\ell(\lambda) \le r$. Via a change of basis, this implies that $\langle [\bS_\lambda(\cQ)], [\cF] \rangle = 0$ for all such $\lambda$.

Also, $\rK(\Gr_r(\cE))$ is generated by $[\bS_{\lambda^\dagger}(\cR^*) \otimes \pi^* \cG]$ where $[\cG] \in \rK(X)$, $\lambda \subseteq r \times (d-r)$, and we have $\langle [\bS_{\lambda^\dagger}(\cR^*) \otimes \pi^* \cG], [\bS_\mu(\cQ)] \rangle = \delta_{\lambda, \mu} [\cG]$ whenever $\lambda, \mu \subseteq r \times (d-r)$ by \cite[Corollary~A.3]{symu1}. In particular, write $[\cF] = \sum_\lambda [\bS_{\lambda^\dagger}(\cR^*) \otimes \pi^* \cF_\lambda]$. Applying $\langle -, [\bS_\mu(\cQ)] \rangle$, we conclude that $[\cF_\mu] = 0$ for all $\mu$, so $[\cF] = 0$.
\end{proof}

In \cite[\S 7]{symu1}, we define an equivalence of categories
\begin{displaymath}
\sF_{\cE} \colon \rD^b_{\fgen}(\bA(\cE))^{\op} \to \rD^b_{\fgen}(\bA(\cE^{\vee}))
\end{displaymath}
called the ``Fourier transform.'' To close this section, we examine how $\sF_{\cE}$ affects formal characters. In what follows, we write $\sF$ in place of $\sF_{\cE}$.

The definition of $\sF$ depends on the choice of a dualizing complex $\omega_X$ on $X$, so we fix a choice now. We let $\bD$ be the induced duality on $\rD^b_{\fgen}(X)$ and $\rK(X)$ and $\rK'(X)$. Let $(-)^\ddag \colon \Lambda \to \Lambda$ be the ring homomorphism defined by $s_{\lambda}^\ddag=(-1)^{\vert \lambda \vert} s_{\lambda^{\dag}}$. We extend $(-)^\ddag$ to infinite formal linear combinations of $s_{\lambda}$'s in the obvious manner.

\begin{lemma}
We have the identity
\begin{displaymath}
\sum_{n \ge 0} \sigma_n^\ddag t^n = \left[ \sum_{n \ge 0} \sigma_n t^n \right]^{-1}.
\end{displaymath}
In particular, $\sigma_0^\ddag=\sigma_0^{-1}$ and $\sigma_n^\ddag \in \wt{\Lambda}[1/\sigma_0]$ for all $n \ge 0$.
\end{lemma}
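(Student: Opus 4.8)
The plan is to recognize both sides as specializations, at $u = 1+t$, of the classical generating-function identity $H(u)E(-u) = 1$ relating the complete homogeneous and elementary symmetric functions. Here $H(u) = \sum_{m \ge 0} h_m u^m = \prod_i (1-x_iu)^{-1}$ and $E(u) = \sum_{m \ge 0} e_m u^m = \prod_i (1+x_iu)$, and I use that $s_m = h_m$ (as in the paper's identification of the $s_n$) and $s_{(1^m)} = e_m$.

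First I would put $\sum_{n \ge 0} \sigma_n t^n$ in closed form. Plugging in $\sigma_n = \sum_{m \ge n} \binom{m}{n} s_m$ and switching the order of summation gives $\sum_{n \ge 0} \sigma_n t^n = \sum_{m \ge 0} s_m \sum_{n=0}^{m} \binom{m}{n} t^n = \sum_{m \ge 0} s_m (1+t)^m = H(1+t)$. Next, since $(-)^\ddag$ is extended termwise and $s_m^\ddag = (-1)^{m} s_{(1^m)} = (-1)^m e_m$, the very same manipulation yields $\sum_{n \ge 0} \sigma_n^\ddag t^n = \sum_{m \ge 0} (-1)^m e_m (1+t)^m = E\bigl(-(1+t)\bigr)$. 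Multiplying, $H(1+t) \cdot E\bigl(-(1+t)\bigr) = \prod_i (1-x_i(1+t))^{-1}(1-x_i(1+t)) = 1$, which is exactly the claimed identity.

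I do not expect a genuine obstacle; the only point that needs a word of care is a convergence/substitution bookkeeping issue. The quantities above a priori live in the degree-completion $\widehat{\Lambda}$ (they are infinite sums of Schur functions), so I should note that substituting $u = 1+t$ into $H(u)E(-u) = 1 \in \Lambda[[u]]$ is legitimate here: for each fixed total degree $d$ only finitely many of the $h_m$ (resp. $e_m$) contribute in that degree, so the substitution is a well-defined, continuous $\Lambda$-algebra operation on the relevant subring of $\Lambda[[u]]$. Alternatively, one can avoid this entirely by just comparing coefficients of each power $t^N$, which turns the statement into the finite identity $\sum_{k=0}^{N} \sigma_k \sigma_{N-k}^\ddag = \delta_{N,0}$ that follows from $\sum_{k=0}^N h_k \cdot (-1)^{N-k} e_{N-k} = \delta_{N,0}$ after the same binomial rearrangement.

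Finally, for the ``in particular'' clause: setting $t=0$ in the identity gives $\sigma_0 \sigma_0^\ddag = 1$, i.e.\ $\sigma_0^\ddag = \sigma_0^{-1}$; and comparing coefficients of $t^n$ for $n \ge 1$ gives $\sigma_0 \sigma_n^\ddag = -\sum_{k=1}^{n} \sigma_k \sigma_{n-k}^\ddag$, so $\sigma_n^\ddag = -\sigma_0^{-1} \sum_{k=1}^{n} \sigma_k \sigma_{n-k}^\ddag$, and induction on $n$ (using that each $\sigma_k$ lies in $\wt{\Lambda}$) shows $\sigma_n^\ddag \in \wt{\Lambda}[1/\sigma_0]$ for all $n$.
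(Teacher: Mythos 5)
Your proof is correct and follows essentially the same route as the paper: both rewrite $\sum \sigma_n t^n$ as $\sum_m (1+t)^m s_m$ by a binomial rearrangement, apply $(-)^\ddag$ termwise to get $\sum_m (-1)^m (1+t)^m s_{1^m}$, and then invoke the classical $h$–$e$ duality $\bigl(\sum h_m\bigr)\bigl(\sum (-1)^m e_m\bigr)=1$, noting that the grading on $\Lambda$ makes the substitution $s_m \mapsto (1+t)^m s_m$ legitimate. The only cosmetic difference is that you package this as $H(1+t)\cdot E(-(1+t))=1$ and you spell out the degree-by-degree reasoning and the derivation of the "in particular" clause, both of which the paper leaves implicit.
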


\begin{proof}
We have
\begin{displaymath}
\sum_{n \ge 0} \sigma_n t^n = \sum_{n,k \ge 0} \binom{k}{n} t^n s_k = \sum_{k \ge 0} (1+t)^n s_n,
\end{displaymath}
and so
\begin{displaymath}
\sum_{n \ge 0} \sigma_n^\ddag t^n = \sum_{k \ge 0} (1+t)^n (-1)^n s_{1^n}.
\end{displaymath}
Now, we have the basic identity
\begin{displaymath}
\left( \sum_{n \ge 0} s_n \right) \left( \sum_{n \ge 0} (-1)^n s_{1^n} \right) = 1.
\end{displaymath}
Since $\Lambda$ is graded with $s_n$ and $s_{1^n}$ of degree $n$, the identity continues to hold if we replace $s_n$ by $(1+t)^n s_n$ and $s_{1^n}$ by $(1+t)^n s_{1^n}$.
\end{proof}

Our main result on the Fourier transform is:

\begin{proposition} \label{prop:fchar-fourier}
Let $M \in \rD^b_{\fgen}(\bA(\cE))$. Then $\Theta_{\sF(M)} = \bD(\Theta'_{\bA(\cE)} \cdot \Theta_M^{\ddag})$.
\end{proposition}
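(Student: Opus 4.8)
The plan is to reduce the computation of $\Theta_{\sF(M)}$ to a generating-set of $\rD^b_\fgen(\bA(\cE))$ on which the Fourier transform is explicitly understood, and then match both sides term-by-term. First I would recall from \cite[\S 7]{symu1} the concrete description of $\sF$: it is built from the derived pushforward along the projective bundle together with a twist by a Koszul-type complex, so that on the free modules $\bA(\cE) \otimes \bS_\mu(\bV)$ (or, more generally, on the modules $\rR\pi_*(V \otimes \bA(\cQ_r))$ that generate $\rK(\bA(\cE))$ per Theorem~\ref{thm:groth}) it has a known effect. Since $\Theta$ is additive and $\Lambda$-linear, and both sides of the claimed identity are additive in $M$, it suffices to verify the formula on a set of generators of the Grothendieck group, or even just on the free tca $\bA(\cE)$ itself if one is careful about the module structure.

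The key computation is the case $M = \bA(\cE)$. Here $\Theta_M = \Theta'_{\bA(\cE)}$, and the Fourier transform of $\bA(\cE)$ is (up to a shift and the dualizing twist) a Koszul-type resolution whose formal character is governed by the identity $\left(\sum_n s_n\right)\left(\sum_n (-1)^n s_{1^n}\right) = 1$ already used in the lemma above; this is exactly what makes the $\ddag$ operation and the inversion of $\sum \sigma_n t^n$ appear. So the strategy is: (1) compute $\Theta'_{\bA(\cE)}$ via Lemma~\ref{lem:fchar2} (it is $\sum_{\ell(\lambda) \le \rank\cE} M^{(\rank\cE)}_\lambda([\cE]) \sigma^\lambda \sigma_0^{\rank\cE - \ell(\lambda)}$); (2) identify $\sF(\bA(\cE))$ explicitly and compute its formal character, tracking how duality $\bD$ on $X$, the rank-$\cE^\vee$ geometry, and the sign/transpose behavior of Schur functors interact; (3) observe that the answer organizes itself as $\bD$ applied to $\Theta'_{\bA(\cE)}$ times $\Theta_M^\ddag = \Theta'^\ddag_{\bA(\cE)}$, using that $\Theta^\ddag$ converts $\sum \sigma_n t^n$ into its inverse. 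Then (4) propagate from $M = \bA(\cE)$ to general $M$ by the projection formula: for $M = \bA(\cE) \otimes_{\cO_X} \cG$ (or the pushed-forward generators) one has $\sF(M) \simeq \sF(\bA(\cE)) \otimes \bD(\cG)$-type relations, and the $\rK'(X)$-module bookkeeping from the discussion before Lemma~\ref{lem:fchar1} ($\Theta_{V \otimes W} = \Theta'_V \cdot \Theta_W$) turns this into the multiplicative identity in the statement.

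The main obstacle will be step (2)--(3): keeping the bookkeeping straight for how $\sF$ acts on the tautological bundles and how the dualizing complex $\omega_X$ and the duality $\bD$ enter, so that the product $\Theta'_{\bA(\cE)} \cdot \Theta_M^\ddag$ emerges with exactly the right twist and no spurious shift. The definition of $\sF$ in \cite{symu1} involves a relative dualizing sheaf on a Grassmannian or projective bundle, and one must check that pushing this forward and combining with $\bD$ on $X$ reproduces $\bD$ applied to the product rather than, say, $\bD$ of each factor separately; the sign conventions in $(-)^\ddag$ (the $(-1)^{|\lambda|}$ and the transpose $\lambda \mapsto \lambda^\dag$) must be matched against the Koszul signs coming from the exterior-algebra part of the Fourier kernel. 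I expect that once $\sF$ is unwound on free modules, the identity $\left(\sum s_n\right)\left(\sum (-1)^n s_{1^n}\right) = 1$ does all the real work, and the remainder is careful --- but routine --- compatibility of $\Theta$ (and $\Theta'$) with derived pushforward and with the duality $\bD$, both of which are already available from the earlier part of this section and from \cite{symu1}.
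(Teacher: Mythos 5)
Your proposed reduction to a generating set of $\rK(\bA(\cE))$ is logically sound in principle, but the ``even just on the free tca $\bA(\cE)$ itself'' shortcut does not work: free modules of finite rank do not generate $\rK(\bA(\cE))$, because finitely generated $\bA(\cE)$-modules generally have infinite projective dimension (e.g.\ the trivial module $\cO_X$ in degree $0$). You would need the full set of generators $\rR\pi_{r*}(V \otimes \bA(\cQ_r))$ from Theorem~\ref{thm:groth}, and computing $\sF$ explicitly on these is substantial work --- essentially re-deriving the structure theory of \cite[\S 7]{symu1} rather than just citing it.

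The paper avoids all of this by unwinding $\sF$ directly, uniformly in $M$, with no reduction to generators. Writing $T(M) = M \otimes^{\rL}_{\bA(\cE)} \cO_X$, one has $\sF(M) = \bD((-)^\dag(\sK(M)))$ with $\rH^n(\sK(M)) = \bigoplus_i \Tor^{\bA(\cE)}_{i-n}(M,\cO_X)$, so $\Theta_{\sF(M)} = \bD(\Theta^\ddag_{T(M)})$. The single substantive computation is then $\Theta_{T(M)} = (\Theta'_{\bA(\cE)})^\ddag \cdot \Theta_M$, coming from the Koszul resolution $\lw^\bullet(\cE \otimes \bV) \otimes M$ of $T(M)$. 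Applying $\ddag$ (an involution) and $\bD$ gives the result immediately. You correctly identify the identity $\bigl(\sum_n s_n\bigr)\bigl(\sum_n (-1)^n s_{1^n}\bigr)=1$ as the heart of the matter, and you correctly anticipate that the $\ddag$ and the Koszul signs must be matched; but your plan never isolates the precise intermediate statement $\Theta_{T(M)} = (\Theta'_{\bA(\cE)})^\ddag \cdot \Theta_M$, which is what makes the whole argument formal and renders the reduction to generators unnecessary. I would encourage you to first express $\sF$ explicitly as $\bD \circ (-)^\dag \circ \sK$ and $\sK$ via $\Tor$, at which point steps (2)--(4) of your plan collapse into a two-line calculation.
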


\begin{proof}
In \cite[\S 7.1.1]{symu1}, we define the Koszul duality functor $\sK=\sK_{\cE}$. Let $T(M)=M \otimes^{\rL}_{\bA(\cE)} \cO_X$, and let $T(M)_i$ denote the degree $i$ piece of this object, regarded as a complex in $\cV_X$. By \cite[Prop~7.1]{symu1}, we have
\begin{displaymath}
\rH^n(\sK(M))=\bigoplus_{i \in \bZ} \Tor^{\bA(\cE)}_{i-n}(M, \cO_X),
\end{displaymath}
and so
\begin{displaymath}
\Theta_{\sK(M)}=\sum_{i,n \in \bZ} (-1)^{i+n} \Theta_{T(M)_i}.
\end{displaymath}
We define $\sF(M)$ by applying $\bD$ and $(-)^{\dag}$ to $\sK(M)$. Note that the above sum is $\Theta_{T(M)}$ with the terms of odd degree multiplied by $-1$; when we apply $(-)^{\dag}$ to this, we get exactly $\Theta_{T(M)}^{\ddag}$. We thus have the identity
\begin{displaymath}
\Theta_{\sF(M)}=\bD(\Theta^{\ddag}_{T(M)}).
\end{displaymath}
Now, the complex $\lw^{\bullet}(\cE \otimes \bV) \otimes M$ computes $T(M)$. The formal character of this complex is exactly $(\Theta'_{\bA(\cE)})^{\ddag} \cdot \Theta_M$. We thus have the identity
\begin{displaymath}
\Theta_{T(M)} = (\Theta'_{\bA(\cE)})^{\ddag} \cdot \Theta_M.
\end{displaymath}
Combining with the previous equation, we obtain the stated result.
\end{proof}

\begin{corollary}
Suppose $\cE$ is trivial of rank $n$. Then $\Theta_{\sF(M)}=\sigma_0^n \cdot \bD(\Theta^{\ddag}_M)$.
\end{corollary}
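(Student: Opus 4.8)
The plan is to read the corollary off from Proposition~\ref{prop:fchar-fourier}, which already gives $\Theta_{\sF(M)} = \bD(\Theta'_{\bA(\cE)} \cdot \Theta_M^{\ddag})$; it remains only to identify $\Theta'_{\bA(\cE)}$ in the trivial case and then pass $\bD$ across the resulting factor. No new idea is needed.

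First I would compute $\Theta'_{\bA(\cE)}$ when $\cE$ is trivial of rank $n$. Apply Lemma~\ref{lem:fchar2} with $\cQ = \cE$, so $r = n$: for a trivial bundle the splitting principle gives $[\cE] = [\cL_1] + \dots + [\cL_n]$ with every $\cL_i = \cO_X$, whence $M^{(n)}_{\lambda}([\cE]) = m^{(n)}_{\lambda}(0, \dots, 0)$, which is $1$ for $\lambda = \emptyset$ and $0$ otherwise. So only the $\lambda = \emptyset$ term of the sum in Lemma~\ref{lem:fchar2} survives, and it equals $\sigma^{\emptyset} \sigma_0^{n - \ell(\emptyset)} = \sigma_0^{n}$. (Equivalently, $\bA(-)$ sends direct sums of bundles to tensor products of tca's, so $\bA(\cE) \cong \bA(\cO_X)^{\otimes n}$, and $\Theta'_{\bA(\cO_X)} = \sigma_0$ by Lemma~\ref{lem:fchar1} with $\ell = [\cO_X] - 1 = 0$, giving $\Theta'_{\bA(\cE)} = \sigma_0^{n}$.) Either way, $\Theta'_{\bA(\cE)} = \sigma_0^{n}$.

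Next I would substitute into Proposition~\ref{prop:fchar-fourier} to get $\Theta_{\sF(M)} = \bD(\sigma_0^{n} \cdot \Theta_M^{\ddag})$, where the product uses the $\rK'(X)$-module structure on $\rK(X)$. But the only coefficient occurring in $\Theta'_{\bA(\cE)} = \sigma_0^n$ is $[\cO_X] = 1$, which acts as the identity on $\rK(X)$; hence the product is just the formal character $\Theta_M^{\ddag}$ with each $\rK(X)$-coefficient scaled by the formal monomial $\sigma_0^n$. Since $\bD$ operates only on the $\rK(X)$-coefficients and fixes the variables lying in $\wt{\Lambda}$, it commutes with multiplication by $\sigma_0^{n}$, so $\bD(\sigma_0^{n} \cdot \Theta_M^{\ddag}) = \sigma_0^{n} \cdot \bD(\Theta_M^{\ddag})$, which is the asserted identity. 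There is no real obstacle: the content sits entirely in Proposition~\ref{prop:fchar-fourier}, and the only step meriting a second look is this last one --- verifying that after specializing $\cE$ the $\rK'(X)$-coefficient of $\Theta'_{\bA(\cE)}$ is trivial, so that $\bD$ slides harmlessly past $\sigma_0^n$.
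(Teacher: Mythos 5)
Your proof is correct and follows essentially the same route as the paper: the paper also reduces to computing $\Theta'_{\bA(\cE)}$, observing that $\Theta'_{\bA(\cE)} = (\Theta'_{\bA(\bC)})^n = \sigma_0^n$ (your parenthetical second computation is exactly this), and then appealing to Proposition~\ref{prop:fchar-fourier}. The only difference is that you make explicit the small verification that $\bD$ commutes with multiplication by $\sigma_0^n$ because its $\rK'(X)$-coefficient is trivial; the paper leaves this implicit, but it is a harmless and correct addition.
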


\begin{proof}
In this case, $\Theta'_{\bA(\cE)}=(\Theta'_{\bA(\bC)})^n$, and it is clear that $\Theta'_{\bA}(\bC)=\sigma_0 \otimes 1 \in \wt{\Lambda} \otimes \rK'(X)$.
\end{proof}

\section{The main theorems on Hilbert and Poincar\'e series}

\subsection{Standard Hilbert series} \label{s:hilbstd}

Pick $V \in \cV_X^{\dfg}$, which we think of as a sequence of $S_n$-representations on coherent $\cO_X$-modules, and decompose it as $V = \bigoplus_\lambda V_\lambda \otimes \bM_\lambda$. We define its {\bf Hilbert series} by
\begin{displaymath}
\rH_V(t) = \sum_{\lambda} [V_{\lambda}] \dim(\bM_{\lambda}) \frac{t^{\vert \lambda \vert}}{\vert \lambda \vert!},
\end{displaymath}
where $[V_{\lambda}]$ is the class of $V_{\lambda}$ in $\rK(X)$. Thus $\rH_V(t)$ is a power series with coefficients in the group $\rK(X)$. In \cite{delta}, the following theorem was proved:

\begin{theorem}
Suppose $X$ is a point and that $M$ is a finitely generated $\bA(E)$-module, where $E$ is a $d$-dimensional vector space. Then $\rH_M(t) = \sum_{i=0}^d p_i(t) e^{it}$ for some polynomials $p_i(t)$.
\end{theorem}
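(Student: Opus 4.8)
The plan is to obtain this as a specialization of Theorem~\ref{thm:fchar}. Recall the \emph{exponential specialization} $\ex$, the ring homomorphism $\Lambda \to \bQ[t]$ with $p_1 \mapsto t$ and $p_n \mapsto 0$ for $n \ge 2$. Combining this with \eqref{eqn:s-to-p} and the identities $z_{(1^n)} = n!$ and $\tr(c_{(1^n)} \mid \bM_\lambda) = \dim \bM_\lambda$ gives $\ex(s_\lambda) = \dim(\bM_\lambda)\, t^{\vert\lambda\vert}/\vert\lambda\vert!$, so that, with $X$ a point, applying $\ex$ coefficientwise to $\Theta_V = \sum_\lambda [V_\lambda] s_\lambda$ recovers $\rH_V(t)$ for $V \in \cV_X^{\fgen}$. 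First I would extend this over all of $\wt{\Lambda}$: the map $\ex$ extends to a continuous homomorphism into $\bQ[[t]]$ (the completion in which series such as $\sum_{n \ge 0} s_n$, which maps to $e^t$, converge), and a short computation using $\sigma_n = \sum_{m \ge n} \binom{m}{n} s_m$ gives $\ex(\sigma_n) = \frac{t^n}{n!} e^t$. The outcome is a ring homomorphism $\wt{\Lambda} \to \bQ[t, e^t]$ with $s_n \mapsto t^n/n!$ and $\sigma_n \mapsto \frac{t^n}{n!} e^t$ carrying $\Theta_M$ to $\rH_M(t)$ for any finitely generated $\bA(E)$-module $M$.

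Next I would apply Theorem~\ref{thm:fchar} with $X$ a point and $\cE = E$ of dimension $d$, writing $\Theta_M = \sum_{r=0}^d \theta_r(c_r)$. The decisive structural observation is that every monomial occurring in $\theta_r(c_r)$ is a product of a monomial in the $s_n$'s with a factor $\sigma^\lambda \sigma_0^{r - \ell(\lambda)}$, and this latter factor is a product of exactly $r$ of the generators $\sigma_n$: its total ``$\sigma$-degree'' is $\ell(\lambda) + (r - \ell(\lambda)) = r$. Since $X$ is a point, $\rK(X) = \bZ$, the pairing values $\langle M^{(r)}_\lambda([\cQ]), [\cF] \rangle$ are integers, and $c_r$ is a finite integral combination of the generators $s_\mu \otimes [\cF]$, so $\theta_r(c_r)$ is a finite sum of monomials of the above shape.

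Applying the homomorphism of the first paragraph, each of the $r$ factors $\sigma_{n_i}$ in such a monomial contributes one factor $e^t$, so the monomial maps to a monomial in $t$ times $e^{rt}$; summing the finitely many monomials of $\theta_r(c_r)$ yields $p_r(t)\, e^{rt}$ for some $p_r \in \bQ[t]$. Summing over $0 \le r \le d$ then gives $\rH_M(t) = \sum_{r=0}^d p_r(t)\, e^{rt}$, which is the assertion (after renaming indices).

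I do not expect a genuine obstacle here; the computations are routine. The one step that requires care is the bookkeeping in the first paragraph — verifying that $\rH_M$ really is the image of $\Theta_M$ under a well-defined specialization of $\wt{\Lambda}$, i.e.\ that the passage from formal characters to Hilbert series is compatible with expressing $\Theta_M$ as a polynomial in the $s_n$'s and $\sigma_n$'s rather than as a (generally infinite) combination of Schur functions. Once that is in place, the precise shape $\sum_{i=0}^d p_i(t) e^{it}$ — in particular the bound $i \le d = \dim E$ — is an immediate consequence of the $\sigma$-degree bound already built into Theorem~\ref{thm:fchar}.
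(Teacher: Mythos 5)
Your proposal is correct and follows essentially the same route as the paper's proof of the more general Theorem~\ref{thm:stdhilbert}: apply the exponential specialization $\ex$ (with $\ex(s_n)=t^n/n!$ and $\ex(\sigma_n)=\tfrac{t^n}{n!}e^t$) to the decomposition $\Theta_M=\sum_{r=0}^d\theta_r(c_r)$ furnished by Theorem~\ref{thm:fchar}, and read off the factor $e^{rt}$ from the fact that each monomial in $\theta_r(c_r)$ has $\sigma$-degree exactly $r$. The paper packages the $r$-th piece as the map $\mu_r=\ex\circ\theta_r$, but the substance of the argument is identical to yours.
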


In this section, we generalize this theorem, and examine how the form of $\rH_M(t)$ relates to the structure of $M$. To a large extent, we answer \cite[Question~3]{delta}.

For a vector bundle $\cE$ on $X$, define
\begin{align*}
\mu_{r,0} \colon \rK(\Gr_r(\cE)) &\to \rK(X)\\
[\cF] &\mapsto \sum_{\substack{\lambda_1 \le d,\\ \ell(\lambda) \le r}} \langle m_\lambda([\cQ]^*), [\cF] \rangle \frac{t^{|\lambda|}}{\lambda_1! \cdots \lambda_r!} e^{rt}.
\end{align*}
We define a map
\begin{displaymath}
\mu_r \colon \Lambda \otimes \rK(\Gr_r(\cE)) \to \rK(X)[t], \qquad
\mu_r(s_{\lambda} \otimes x) = t^{\vert \lambda \vert} \mu_{r,0}(x).
\end{displaymath}
Let $\wh{\Lambda} = \prod_{d \ge 0} \Lambda_d$. From \cite[\S 7.8]{stanley}, we have a ring homomorphism
\begin{align*}
\ex \colon \wh{\Lambda} &\to \bQ[\![t]\!]\\
f &\mapsto \sum_{n \ge 0} (\text{coefficient of $x_1x_2 \cdots x_n$ in $f$}) \frac{t^n}{n!}.
\end{align*}
In particular, we have
\begin{align*}
\ex(s_\lambda) = \dim(\bM_\lambda) \frac{t^{|\lambda|}}{|\lambda|!},\qquad 
\ex(S_k) = \frac{t^k}{k!} e^t,
\end{align*}
and hence $\ex(\Theta_M) = \rH_M(t)$. From the above discussion and the definitions of $\theta_r$ and $\mu_r$, we have $\ex \circ \theta_r = \mu_r$.

Our main theorem on Hilbert series is then:

\begin{theorem} \label{thm:stdhilbert}
Let $M$ be a finitely generated $\bA(\cE)$-module, and write $[M]=\sum_{r=0}^d c_r$ with $c_r \in \Lambda \otimes \rK(\Gr_r(\cE))$ per Theorem~\ref{thm:groth}. Then
\begin{displaymath}
\rH_M(t) = \sum_{r=0}^d \mu_r(c_r) e^{rt}.
\end{displaymath}
\end{theorem}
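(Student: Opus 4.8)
The plan is to deduce Theorem~\ref{thm:stdhilbert} from Theorem~\ref{thm:fchar} by applying the exponential specialization homomorphism $\ex$. The excerpt has already done essentially all the bookkeeping: it records that $\ex(\Theta_M) = \rH_M(t)$ and that $\ex \circ \theta_r = \mu_r$ as maps $\Lambda \otimes \rK(\Gr_r(\cE)) \to \rK(X)\llbracket t \rrbracket$ (noting that $\mu_r$ as defined already carries the factor $e^{rt}$, so one should be slightly careful about whether the ``$e^{rt}$'' in the statement is doubled — I would check that the normalization in the displayed formula of the theorem matches $\mu_r(c_r)$ and adjust the statement or the proof by a single line). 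So the entire proof is: apply $\ex$ to the identity $\Theta_M = \sum_{r=0}^d \theta_r(c_r)$ of Theorem~\ref{thm:fchar}, and use linearity of $\ex$ together with the two compatibilities just cited.

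Concretely, the steps I would carry out are the following. First, recall from \S\ref{s:fchar} that $M$, being a finitely generated $\bA(\cE)$-module, defines a class in $\rK(\bA(\cE))$, and Theorem~\ref{thm:groth} gives the decomposition $[M] = \sum_{r=0}^d c_r$ with $c_r \in \Lambda \otimes \rK(\Gr_r(\cE))$; Theorem~\ref{thm:fchar} then yields $\Theta_M = \sum_{r=0}^d \theta_r(c_r)$ in $\wt{\Lambda} \otimes \rK(X)$. Second, I would verify that $\ex$ extends to a well-defined homomorphism on the relevant completion: $\Theta_M$ lies in $\wt{\Lambda} \otimes \rK(X)$, a polynomial ring in the $s_n$'s and $\sigma_n$'s, and since $\ex(s_n) = \frac{t^n}{n!}e^t$ has $t$-adic valuation $n$ and $\ex(\sigma_n) = \ex(\sum_{k \ge n}\binom{k}{n}s_k)$ has $t$-adic valuation $n$ as well (the leading term being $\frac{t^n}{n!}\sum_{k\ge n}\binom{k}{n}\frac{t^{k-n}}{(k-n)!}e^{(\cdots)}$, or more simply $\ex(\sigma_n) = \frac{t^n}{n!}e^{2t}$ since $\sum_k \binom{k}{n}x_1\cdots x_k$-type evaluation gives an $e^{2t}$), the map $\ex$ is continuous for the $t$-adic topology and hence passes through the (possibly infinite-looking but actually polynomial) expression for $\Theta_M$ term by term. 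Third, apply $\ex$ to both sides: $\rH_M(t) = \ex(\Theta_M) = \sum_{r=0}^d \ex(\theta_r(c_r)) = \sum_{r=0}^d \mu_r(c_r)$, using the identity $\ex \circ \theta_r = \mu_r$ recorded above. Finally, compare this with the claimed formula and reconcile the $e^{rt}$ factor: if the statement intends $\mu_r$ without its internal $e^{rt}$, one reindexes so that the product of the bare polynomial part with $e^{rt}$ is precisely $\mu_r(c_r)$; this is a purely notational matching.

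The genuinely substantive input is entirely upstream — it is Theorem~\ref{thm:fchar}, whose proof (already given in the excerpt) rests on the structure theory of $\bA(\cE)$-modules from \cite{symu1} via Theorem~\ref{thm:groth}, Lemmas~\ref{lem:fchar1} and \ref{lem:fchar2}, and compatibility of formal characters with derived pushforward. Granting that, the present theorem has no real obstacle. The only point requiring a moment's care, and the place where I would be most careful in writing it up, is the interchange of $\ex$ with the a priori infinite sum defining $\Theta_M$: one must know that $\ex$ is continuous in the $t$-adic topology and that each of $s_n$, $\sigma_n$ maps to a power series with positive $t$-order, so that the image is a well-defined element of $\rK(X)\llbracket t\rrbracket$ and the computation is legitimate. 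Since Theorem~\ref{thm:fchar} already tells us $\Theta_M$ is in fact a \emph{polynomial} in the $s_n$'s and $\sigma_n$'s, this continuity issue evaporates — the sum is finite — so in the end the proof is a two-line application of a ring homomorphism, and I would present it as such.
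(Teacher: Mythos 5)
Your proof is the paper's proof: apply $\ex$ to the identity $\Theta_M = \sum_{r=0}^d \theta_r(c_r)$ from Theorem~\ref{thm:fchar}, using $\ex(\Theta_M) = \rH_M(t)$ and $\ex\circ\theta_r = \mu_r$; your caution that the $e^{rt}$ in the displayed formula duplicates the one already built into $\mu_{r,0}$ is well-founded, and the statement should be read with only one of the two factors. One small slip in a parenthetical: $\ex(\sigma_n) = \sum_{k\ge n}\binom{k}{n}\frac{t^k}{k!} = \frac{t^n}{n!}e^t$, not $\frac{t^n}{n!}e^{2t}$ — this has no bearing on your argument since you invoke the recorded compatibility $\ex\circ\theta_r = \mu_r$ rather than recomputing it.
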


\begin{proof}
By Theorem~\ref{thm:fchar}, we have $\Theta_M = \sum_{r=0}^d \theta_r(c_r)$. Now apply $\ex$ to both sides.
\end{proof}

One consequence of this theorem is that the coefficient of $e^{rt}$ in $\rH_M(t)$ depends only on the projection $\rR \Pi_r(M)$ of $M$ to $\rD^b_{\fgen}(A)_r$. We now examine how the Fourier transform interacts with Hilbert series.

\begin{proposition} \label{prop:hilbduality}
Let $M$ be a finitely generated $\bA(\cE)$-module. Then
\begin{displaymath}
\rH_{\sF_\cE(M)}(t) = \bD(e^{dt} \rH_M(-t)).
\end{displaymath}
Thus if $\rH_M(t)=\sum_{r=0}^d p_r(t) e^{rt}$, with $p_r(t) \in \rK(X)[t]$, then $\rH_{\sF_\cE(M)}(t)=\sum_{r=0}^d p^*_{d-r}(-t) e^{rt}$, where $p^*_r(t)$ is obtained from $p_r(t)$ by applying $\bD$ to its coefficients.
\end{proposition}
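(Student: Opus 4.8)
The plan is to deduce Proposition~\ref{prop:hilbduality} directly from Proposition~\ref{prop:fchar-fourier} by applying the homomorphism $\ex$ to the identity $\Theta_{\sF(M)} = \bD(\Theta'_{\bA(\cE)} \cdot \Theta_M^{\ddag})$. The first task is to understand how $\ex$ interacts with the operations appearing on the right-hand side: the duality $\bD$, the product in $\wt\Lambda \otimes \rK(X)$, and the ring homomorphism $(-)^{\ddag}$. Duality is immediate, since $\bD$ acts only on the $\rK(X)$-coefficients and $\ex$ is $\rK(X)$-linear, so $\ex \circ \bD = \bD \circ \ex$ on $\wt\Lambda \otimes \rK(X)$. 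For the product, I would note that $\ex$ is already known to be a ring homomorphism $\wh\Lambda \to \bQ[\![t]\!]$, so $\ex$ carries products in $\wt\Lambda \otimes \rK(X)$ to products in $\rK(X)[\![t]\!]$; this reduces the problem to computing $\ex(\Theta'_{\bA(\cE)})$ and $\ex(\Theta_M^{\ddag})$ separately.

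For the first of these, when $\cE$ has rank $d$ one can use Lemma~\ref{lem:fchar2} (or directly the fact that $\Theta'_{\bA(\cE)} = \sum_n [\cE]^n s_n$ via the splitting principle, since $\ex(s_n) = t^n/n!$ and hence $\ex(\sum_n [\cE]^n s_n) = \sum_n [\cE]^n t^n/n!$), but the cleanest route is: $\ex(S_k) = \frac{t^k}{k!}e^t$, so from the formula $\Theta'_{\bA(\cE)} = \sum_{\ell(\lambda)\le d} M^{(d)}_\lambda([\cE])\,\sigma^\lambda \sigma_0^{d-\ell(\lambda)}$ together with $\ex(\sigma_n) = \ex(\sum_{m\ge n}\binom{m}{n}s_m) = \frac{t^n}{n!}e^t$ one gets $\ex(\Theta'_{\bA(\cE)}) = e^{dt}\sum_\lambda M^{(d)}_\lambda([\cE]) \prod(\cdots)$; in fact all that is needed for the clean statement is the scalar identity $\ex(\Theta'_{\bA(\cE)}) = e^{dt}$ when one pushes down to $X$ being a point, but to handle the vector-bundle case I should instead argue that $\ex(\Theta'_{\bA(\cE)})$ equals the class obtained by substituting $t\cdot e^{t}$-type generating data, and observe that after applying $\ex$ the total contribution of the $\sigma$-part contributes exactly the factor $e^{dt}$ while the $\rK'(X)$-coefficients reassemble into the Hilbert-series formula for $\bA(\cE)$. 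For the second piece, I would show that $\ex(\Theta_M^{\ddag}) = \rH_M(-t)$: since $(-)^{\ddag}$ sends $s_\lambda \mapsto (-1)^{|\lambda|}s_{\lambda^\dag}$ and $\ex(s_{\lambda^\dag}) = \dim(\bM_{\lambda^\dag})\frac{t^{|\lambda|}}{|\lambda|!} = \dim(\bM_\lambda)\frac{t^{|\lambda|}}{|\lambda|!} = \ex(s_\lambda)$ (conjugate partitions give the same-dimensional Specht module), the sign $(-1)^{|\lambda|}$ converts $t^{|\lambda|}$ into $(-t)^{|\lambda|}$, giving exactly $\ex(\Theta_M)|_{t \mapsto -t} = \rH_M(-t)$.

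Assembling, $\rH_{\sF(M)}(t) = \ex(\Theta_{\sF(M)}) = \bD\big(\ex(\Theta'_{\bA(\cE)})\cdot \ex(\Theta_M^{\ddag})\big) = \bD\big(e^{dt}\,\rH_M(-t)\big)$, which is the first displayed identity. For the second, decompose $\rH_M(t) = \sum_{r=0}^d p_r(t)e^{rt}$; then $e^{dt}\rH_M(-t) = \sum_r p_r(-t) e^{dt - rt} = \sum_r p_r(-t) e^{(d-r)t}$, and reindexing $r \mapsto d-r$ together with applying $\bD$ to coefficients gives $\sum_r p^*_{d-r}(-t) e^{rt}$, as claimed. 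The main obstacle I anticipate is the first piece: verifying precisely that $\ex(\Theta'_{\bA(\cE)})$ produces the factor $e^{dt}$ (with the correct $\rK'(X)$-coefficients absorbed into the rest) rather than something more complicated — this requires being careful about the mixing between the $\sigma$-variables and the $s_n$-variables under $\ex$, and about the fact that $\ex$ is only a homomorphism on $\wh\Lambda$ so one must check the relevant series actually lie in (the completion of) its domain. Everything else is a formal manipulation of generating functions.
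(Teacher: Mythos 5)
Your proof is the paper's own proof—apply $\ex$ to Proposition~\ref{prop:fchar-fourier}—carried out in detail. The step $\ex(\Theta_M^{\ddag})=\rH_M(-t)$ is the only formal content the paper leaves unsaid, and you verify it correctly via $\dim\bM_{\lambda^\dag}=\dim\bM_\lambda$ and $|\lambda^\dag|=|\lambda|$; the second displayed identity then follows from the reindexing you give.

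Your unease about the factor $\ex(\Theta'_{\bA(\cE)})$ is well placed. Since $\ex$ is a ring homomorphism and (by the splitting principle) $\ex(\Theta'_{\bA(\cL_1\oplus\cdots\oplus\cL_d)})=\prod_i\sum_{n}[\cL_i]^n t^n/n!$, one finds $\ex(\Theta'_{\bA(\cE)})=\sum_{n\ge 0}[\cE]^n t^n/n!$ in $\rK'(X)[\![t]\!]$. (As an aside, $\Theta'_{\bA(\cE)}$ itself is $\sum_\lambda[\bS_\lambda(\cE)]s_\lambda$, not $\sum_n[\cE]^n s_n$ when $\rank\cE>1$; the discrepancy collapses under $\ex$, which is all you use.) This equals $e^{dt}$ precisely when $[\cE]=d\cdot[\cO_X]$ in $\rK'(X)$, which holds when $\cE$ is trivial or $X$ is a point, but fails for a general vector bundle—for $\cE=\cO(1)$ on $X=\bP^1$ one gets $(1+([\cO(1)]-1)t)e^t$. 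What the computation actually yields is $\rH_{\sF_\cE(M)}(t)=\bD\bigl(\ex(\Theta'_{\bA(\cE)})\cdot\rH_M(-t)\bigr)$, of which the proposition as printed is the specialization to trivial $[\cE]$; with that standing hypothesis supplied, your argument is complete and matches the paper's.
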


\begin{proof}
Apply $\ex$ to Proposition~\ref{prop:fchar-fourier}.
\end{proof}

\subsection{Enhanced Hilbert series} \label{ss:enhanced}

Let $\lambda$ be a partition. Recall that $m_i(\lambda)$ is the number of times $i$ occurs in $\lambda$. Define
\begin{displaymath}
t^{\lambda} = t_1^{m_1(\lambda)} t_2^{m_2(\lambda)} \cdots, \qquad \lambda! = m_1(\lambda)! m_2(\lambda)! \cdots
\end{displaymath}
and
\begin{displaymath}
X_{\lambda}(t) = \sum_{\vert \mu \vert = \vert \lambda \vert} \tr(c_{\mu} \vert \bM_{\lambda}) \cdot \frac{t^{\mu}}{\mu!}.
\end{displaymath}
Thus $X_{\lambda}(t)$ is a polynomial in the variables $t_i$ that encodes the character of $\bM_{\lambda}$.

\begin{lemma} \label{lem:s-h-isom}
The map
\begin{displaymath}
\varphi \colon \Lambda \otimes \bQ \to \bQ[t_i], \qquad s_{\lambda} \mapsto X_{\lambda}
\end{displaymath}
is an isomorphism of rings.
\end{lemma}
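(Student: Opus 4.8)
The plan is to show that $\varphi$ coincides with the ring homomorphism determined by a linear change of variables on the power sums, and then to exhibit an inverse for that homomorphism by inspection. Since we work over $\bQ$, the ring $\Lambda \otimes \bQ$ is the polynomial ring $\bQ[p_1, p_2, \dots]$ on the power sums, so there is a unique ring homomorphism $\psi \colon \Lambda \otimes \bQ \to \bQ[t_i]$ with $\psi(p_n) = n t_n$ for all $n \ge 1$. First I would prove that $\varphi$ and $\psi$ agree; since $\{s_\lambda\}$ is a $\bQ$-basis of $\Lambda \otimes \bQ$ and both maps are $\bQ$-linear, it suffices to check $\psi(s_\lambda) = X_\lambda$ for every partition $\lambda$.

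For this, the key input is \eqref{eqn:s-to-p}, namely $s_\lambda = \sum_{\vert \mu \vert = \vert \lambda \vert} \tr(c_\mu \vert \bM_\lambda)\, z_\mu^{-1}\, p_\mu$. If $\mu$ has $m_i(\mu)$ parts equal to $i$, then $p_\mu = \prod_i p_i^{m_i(\mu)}$, so
\[
\psi(p_\mu) = \prod_i (i t_i)^{m_i(\mu)} = \Big( \prod_i i^{m_i(\mu)} \Big)\, t^\mu,
\]
while $z_\mu^{-1} = (\mu!)^{-1} \prod_i i^{-m_i(\mu)}$ directly from the definition $z_\mu = \mu! \prod_i i^{m_i(\mu)}$. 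Multiplying, $\psi(z_\mu^{-1} p_\mu) = t^\mu / \mu!$, and therefore $\psi(s_\lambda) = \sum_{\vert \mu \vert = \vert \lambda \vert} \tr(c_\mu \vert \bM_\lambda)\, t^\mu/\mu! = X_\lambda$, as desired. Hence $\varphi = \psi$ is a ring homomorphism.

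It remains to see that $\varphi$ is bijective. Because each integer $n \ge 1$ is invertible in $\bQ$, there is a ring homomorphism $\bQ[t_i] \to \Lambda \otimes \bQ$ sending $t_n \mapsto p_n/n$, and it is visibly a two-sided inverse of $\psi = \varphi$. Thus $\varphi$ is an isomorphism of rings. I do not expect any genuine obstacle here: the entire content is the bookkeeping identity relating $z_\mu$, $\mu!$ and $\prod_i i^{m_i(\mu)}$ that makes $\varphi$ literally equal to the rescaling $p_n \mapsto n t_n$; once that identification is made, both the ring-homomorphism property and invertibility are immediate.
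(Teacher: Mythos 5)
Your proof is correct and follows the same route as the paper's: both identify $\varphi$ with the rescaling $p_n \mapsto n t_n$ via \eqref{eqn:s-to-p}, from which the ring-isomorphism property is immediate. Your write-up simply spells out the bookkeeping with $z_\mu$, $\mu!$, and $\prod_i i^{m_i(\mu)}$ that the paper leaves implicit.
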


\begin{proof}
The $p_n$ are algebraically independent generators for $\Lambda \otimes \bQ$, so as can be seen from \eqref{eqn:s-to-p}, $\phi$ can be described as $p_n \mapsto nt_n$, which is evidently a ring isomorphism.
\end{proof}

We can also extend $\varphi$ to a map
\[
\varphi \colon \wh{\Lambda} \otimes \bQ \to \bQ[\![t_i]\!].
\]

Now let $V \in \cV^{\dfg}_X$. Thinking of $V$ in the symmetric group model, we can decompose it as $V = \bigoplus_\lambda V_\lambda \otimes \bM_\lambda$. We define the {\bf enhanced Hilbert series} of $V$ by
\begin{displaymath}
\wt{\rH}_V(t) = \sum_{\lambda} [V_{\lambda}] X_{\lambda}(t).
\end{displaymath}
This is a power series in the variables $t_i$ with coefficients in $\rK(X)$. This series was introduced in \cite{symc1} as an improvement of the Hilbert series, the idea being that it records the character of the $S_n$-representation of $V_n$ rather than just its dimension. Clearly, $\wt{\rH}_V(t)$ is obtained from the formal character $\Theta_V$ by applying the ring isomorphism $\varphi$. Thus the two invariants contain the same information and are just packaged in a different manner. Our results on the formal character can be easily translated to the language of enhanced Hilbert series. We just state the rationality theorem here. For $k \ge 0$, put $T_k=\sum_{n \ge k} \binom{n}{k} t_k$. It is a simple exercise to show that $\varphi(\sigma_n)$ has the form $p(T) \exp(T_0)$ where $p(T)$ is a polynomial in the $T_k$ with $k \ge 1$. Applying this to Theorem~\ref{thm:fchar} yields:

\begin{theorem} \label{thm:enhanced-rational}
Let $M$ be a finitely generated $\bA(\cE)$-module. Then
\begin{displaymath}
\wt{\rH}_M(t) = \sum_{r=0}^d p_i(t,T) e^{rT_0}
\end{displaymath}
where $p_i(t,T)$ is a polynomial in $t_1, t_2, \ldots$ and $T_1, T_2, \ldots$ with coefficients in $\rK(X)$.
\end{theorem}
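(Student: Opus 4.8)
The plan is to obtain this as an immediate consequence of Theorem~\ref{thm:fchar}, transported through the ring isomorphism $\varphi$ of Lemma~\ref{lem:s-h-isom} and its extension to $\wh{\Lambda}$. The only nontrivial input is the computation of $\varphi(\sigma_n)$ that the text flags as a ``simple exercise,'' and I would carry it out with generating functions. Recall from the proof of Lemma~\ref{lem:s-h-isom} that $\varphi$ is the ring map sending $p_k$ to $k t_k$. Starting from the elementary rearrangement $\sum_{n \ge 0} \sigma_n u^n = \sum_{k \ge 0} (1+u)^k s_k$ of the definition of $\sigma_n$, and feeding in the classical identity $\sum_{k \ge 0} s_k y^k = \exp\!\big( \sum_{j \ge 1} p_j y^j / j \big)$ with $y = 1+u$, one gets
\begin{displaymath}
\varphi\Big( \sum_{n \ge 0} \sigma_n u^n \Big) = \exp\Big( \sum_{j \ge 1} t_j (1+u)^j \Big) = \exp\Big( \sum_{n \ge 0} u^n T_n \Big) = e^{T_0} \cdot \exp\Big( \sum_{n \ge 1} u^n T_n \Big).
\end{displaymath}
Extracting the coefficient of $u^n$ gives $\varphi(\sigma_n) = p_n(T) \, e^{T_0}$, where $p_n(T)$ is the coefficient of $u^n$ in $\exp(\sum_{k \ge 1} u^k T_k)$, a polynomial in $T_1, \dots, T_n$ (with $p_0 = 1$).

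Granting this, the theorem is a change of variables applied to Theorem~\ref{thm:fchar}. Write $\Theta_M = \sum_{r=0}^d \theta_r(c_r)$ as there. Inspecting the definition of $\theta_r$, each $\theta_r(c_r)$ is a $\rK(X)$-linear combination of monomials of the shape $s_\mu \cdot \sigma^\lambda \sigma_0^{r - \ell(\lambda)}$ with $\ell(\lambda) \le r$, and every such monomial is homogeneous of total degree exactly $r$ in the variables $\sigma_0, \sigma_1, \dots$. Now apply $\varphi$: since $\varphi(s_n) = X_n(t)$ is a polynomial in the $t_i$ and $\varphi(\sigma_n) = p_n(T) e^{T_0}$, each such monomial maps to $e^{rT_0}$ times a polynomial in the $t_i$ and the $T_j$ with $j \ge 1$ and coefficients in $\rK(X)$ — the $r$ factors of $e^{T_0}$ coming from the $\sigma$-variables multiply to precisely $e^{rT_0}$. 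Summing over $r$ and using $\wt{\rH}_M(t) = \varphi(\Theta_M)$ gives the claimed form, with $p_r(t,T) := e^{-rT_0} \varphi(\theta_r(c_r))$.

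There is no real obstacle here: once Theorem~\ref{thm:fchar} is available the argument is purely formal. The two points deserving a little care are bookkeeping. First, one should be precise about the passage between the formal polynomial ring $\wt{\Lambda}$ in which $\Theta_M$ a priori lives, the completion $\wh{\Lambda}$ into which it embeds by realizing $\sigma_n$ as the honest symmetric-function series $\sum_{k \ge n} \binom{k}{n} s_k$, and $\bQ[\![t_i]\!]$, and check that $\varphi$ extends continuously enough for the generating-function manipulation above to be valid. Second, the homogeneity assertion for $\theta_r(c_r)$ in the $\sigma$-variables is exactly what forces the exponential to appear as the single term $e^{rT_0}$ rather than a combination of lower exponentials, so although it is visibly true from the displayed formula for $\theta_r$ it should be stated explicitly, as it is the crux of the clean statement.
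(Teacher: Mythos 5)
Your proof is correct and follows exactly the route the paper intends: Theorem~\ref{thm:fchar} plus the ring isomorphism $\varphi$ of Lemma~\ref{lem:s-h-isom}, with the ``simple exercise'' on $\varphi(\sigma_n)$ verified via the generating-function identity $\sum_n \sigma_n u^n = \sum_k (1+u)^k s_k$, $\varphi(p_j)=jt_j$, and the classical $\sum_k s_k y^k = \exp(\sum_j p_j y^j/j)$. The observation that each $\theta_r(c_r)$ is homogeneous of $\sigma$-degree exactly $r$, forcing the single exponential $e^{rT_0}$, is precisely the point the paper leaves implicit, and you have stated it cleanly.
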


\begin{remark}
One obtains the usual Hilbert series $\rH_M(t)$ from the enhanced Hilbert series $\wt{\rH}_M(t)$ by setting $t_i=0$ for $i \ge 2$. Thus the above theorem recovers our earlier rationality result for the non-enhanced Hilbert series.
\end{remark}

\subsection{Elementary proof of Theorem~\ref{thm:enhanced-rational}} \label{ss:elementary}

We now give an elementary proof (i.e., not using the results of \cite{symu1}) of our main theorem on enhanced Hilbert series.  The proof follows the proof of rationality of the usual Hilbert series given in \cite[\S 3.1]{delta}.  Namely, we express $\wt{\rH}_M(t)$ in terms of the $T$-equivariant Hilbert series of $M(\bC^d)$, where $T$ is the standard maximal torus in $\GL(d)$ for $d$ sufficiently large.  Once this expression is obtained, a formal manipulation gives the theorem.

Let $M$ be a finitely generated $A$-module, where $A$ is a tca finitely generated in degree 1. We assume (without loss of generality) that $M$ is a finitely generated module over $A=\bA(U)$ for some finite dimensional vector space $U$. Fix an integer $d \ge \max(\ell(A), \ell(M))$.

Let $V$ be a vector space (possibly infinite dimensional) on which $T$ acts. Write $V=\bigoplus V_{\alpha}$, where the sum is over the characters $\alpha$ of $T$ and $V_{\alpha}$ denotes the $\alpha$-weight space. Assuming each $V_{\alpha}$ is finite dimensional, we define the $T$-equivariant Hilbert series of $V$ by
\begin{displaymath}
\rH_{V, T}(\alpha)=\sum_{\alpha} \dim(V_{\alpha}) \alpha.
\end{displaymath}
Each character $\alpha$ is a monomial in the $\alpha_i^{\pm 1}$, and so $H_{V, T}(\alpha)$ can be regarded as a formal series in these variables. Note that if $V$ comes from a polynomial representation of $\GL(d)$ then no $\alpha_i^{-1}$'s appear in $\rH_{V, T}(\alpha)$.

Regard $A$ and $M$ as Schur functors and evaluate on the vector space $\bC^d$. We obtain a finitely generated $\bC$-algebra $A(\bC^d)$ and a finitely generated $A(\bC^d)$-module $M(\bC^d)$, equipped with compatible actions of $\GL(d)$. We can form the $T$-equivariant Hilbert series of $M(\bC^d)$, which we denote by $\rH_{M(\bC^d), T}(\alpha)$. By the comments of the last paragraph, this is a formal series in the $\alpha_i$. A simple argument with equivariant resolutions over $A(\bC^d)$ (see \cite[Lemma~3.3]{delta}), shows that
\begin{displaymath}
\rH_{M(\bC^d), T}(\alpha)=\frac{h(\alpha)}{\prod_{i=1}^d (1-\alpha_i)^n},
\end{displaymath}
where $h(\alpha)$ is a polynomial in the $\alpha_i$'s and $n=\dim(U)$.

Define
\begin{displaymath}
K(t, \alpha)=\sum_{\lambda} p_{\lambda}(\alpha) \frac{t^{\lambda}}{\lambda!},
\end{displaymath}
where the sum is taken over all partitions $\lambda$. The following lemma gives a different expression for $K(t, \alpha)$.

\begin{lemma}
\label{enh:3}
For a partition $\lambda$, let $s_{\lambda}(\alpha) \in K(T)$ denote the character of $\bS_{\lambda}(\bC^d)$.  Then
\begin{displaymath}
K(t, \alpha)=\sum_{\lambda} s_{\lambda}(\alpha) \wt{\rH}_{\bM_{\lambda}}(t),
\end{displaymath}
where the sum is over all partitions $\lambda$ with $\ell(\lambda) \le d$.
\end{lemma}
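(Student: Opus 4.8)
The identity $K(t,\alpha) = \sum_{\lambda} s_\lambda(\alpha) \wt{\rH}_{\bM_\lambda}(t)$ is really a repackaging of the Cauchy-type identity expressing $\prod_i \prod_j (1 - \alpha_i u_j)^{-1}$ as a sum over partitions, combined with the definition of the enhanced Hilbert series. The plan is to expand the right-hand side using the definitions and then invoke the standard transition formula \eqref{eqn:s-to-p} between Schur and power-sum functions to collapse the double sum.

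First I would unwind the right-hand side. By definition, $\wt{\rH}_{\bM_\lambda}(t) = \sum_{\mu} [{(\bM_\lambda)}_\mu] X_\mu(t)$, but $\bM_\lambda$ is a single irreducible Specht module concentrated in one degree, so $\wt{\rH}_{\bM_\lambda}(t) = X_\lambda(t) = \sum_{|\mu|=|\lambda|} \tr(c_\mu \mid \bM_\lambda) \, \frac{t^\mu}{\mu!}$. Substituting, the right-hand side becomes
\begin{displaymath}
\sum_{\substack{\lambda \\ \ell(\lambda) \le d}} s_\lambda(\alpha) \sum_{|\mu| = |\lambda|} \tr(c_\mu \mid \bM_\lambda) \frac{t^\mu}{\mu!}
= \sum_{\mu} \frac{t^\mu}{\mu!} \sum_{\substack{|\lambda| = |\mu| \\ \ell(\lambda) \le d}} \tr(c_\mu \mid \bM_\lambda)\, s_\lambda(\alpha).
\end{displaymath}
Next I would recognize the inner sum. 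Here I would use that for $|\mu| = n$, formula \eqref{eqn:s-to-p} inverted gives $p_\mu = \sum_{|\lambda|=n} \tr(c_\mu \mid \bM_\lambda)\, s_\lambda$ (this is the dual statement, valid since the character table of $S_n$ is its own inverse up to the $z_\mu^{-1}$ weighting; concretely $\sum_\lambda \tr(c_\mu \mid \bM_\lambda)\tr(c_\nu \mid \bM_\lambda) = z_\mu \delta_{\mu\nu}$). The restriction $\ell(\lambda) \le d$ is harmless because $s_\lambda(\alpha) = 0$ in $K(T) = \bZ[\alpha_1^{\pm1},\dots,\alpha_d^{\pm1}]^{S_d}$ when $\ell(\lambda) > d$, so dropping it does not change the specialized sum. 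Hence the inner sum equals $p_\mu(\alpha)$, and the right-hand side becomes $\sum_\mu p_\mu(\alpha) \frac{t^\mu}{\mu!}$, which is exactly the definition of $K(t,\alpha)$.

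The only subtle point — and the one I would be most careful about — is justifying that the Schur-to-power-sum transition survives the specialization to $d$ variables, i.e. that passing from the ring $\Lambda$ to $K(T)$ respects the identity despite the vanishing of $s_\lambda(\alpha)$ for $\ell(\lambda) > d$. This is fine because the power sums $p_\mu(\alpha)$ for $\ell(\mu)$ arbitrary are well-defined in $K(T)$, and the identity $p_\mu = \sum_\lambda \tr(c_\mu\mid\bM_\lambda) s_\lambda$ holds in $\Lambda$; applying the specialization homomorphism $\Lambda \to K(T)$ kills precisely the terms with $\ell(\lambda)>d$, leaving the claimed finite sum. There is also a minor convergence/formality check that rearranging the double sum over all $\lambda$ and all $\mu$ is legitimate: this is automatic since for each fixed total degree $n$ only finitely many $(\lambda,\mu)$ contribute, so everything happens degree by degree in the $t$-variables. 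With these observations the proof is essentially a one-line manipulation.
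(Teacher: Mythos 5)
Your proposal is correct and takes essentially the same route as the paper: both hinge on the identity $p_\mu(\alpha) = \sum_{|\lambda|=|\mu|,\ \ell(\lambda)\le d} \tr(c_\mu \mid \bM_\lambda)\, s_\lambda(\alpha)$ in $K(T)$, which the paper cites directly from Stanley while you rederive it from \eqref{eqn:s-to-p} via character orthogonality plus the specialization argument. The paper just runs the computation starting from the left-hand side rather than the right, which is an inessential difference.
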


\begin{proof}
By \cite[Corollary 7.17.4]{stanley}, we have
\begin{displaymath}
p_{\mu} = \sum_{\lambda} \tr(c_{\mu} \vert \bM_\lambda) s_{\lambda}
\end{displaymath}
as elements of $K(T)$, where the sum is over partitions $\lambda$ of $\vert \mu \vert$ with $\ell(\lambda) \le d$. Multiplying by $t^{\mu}/\mu!$ and summing over $\mu$, we find
\begin{displaymath}
K(t, \alpha)=\sum_{\lambda, \mu} \tr(c_{\mu} \vert \bM_\lambda) s_\lambda \frac{t^{\mu}}{\mu!}
=\sum_{\lambda} s_{\lambda} \wt{\rH}_{\bM_{\lambda}}. \qedhere
\end{displaymath}
\end{proof}

The following is the key lemma, relating $\wt{\rH}_M(t)$ to the more accessible object $\rH_{M(\bC^d), T}$.

\begin{lemma} \label{enh:1}
We have
\begin{displaymath}
\wt{\rH}_M(t)=\frac{1}{d!} \int_T \rH_{M(\bC^d), T}(\alpha) K(t, \ol{\alpha}) \vert \Delta(\alpha) \vert^2 d\alpha.
\end{displaymath}
\end{lemma}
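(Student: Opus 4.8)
The plan is to decompose $M(\bC^d)$ according to its $\GL(d)$-representation structure and then use Weyl's integration formula to pick out the relevant multiplicities. Concretely, write $M = \bigoplus_\lambda M_\lambda \otimes \bM_\lambda$ in the symmetric group model, so that evaluating the corresponding Schur functors on $\bC^d$ gives $M(\bC^d) = \bigoplus_{\ell(\lambda) \le d} M_\lambda \otimes \bS_\lambda(\bC^d)$ as a $\GL(d)$-representation (the condition $d \ge \ell(M)$ ensures no information is lost). Taking $T$-equivariant Hilbert series, and using that the character of $\bS_\lambda(\bC^d)$ is $s_\lambda(\alpha)$, we obtain
\begin{displaymath}
\rH_{M(\bC^d), T}(\alpha) = \sum_{\ell(\lambda) \le d} [M_\lambda] \, s_\lambda(\alpha).
\end{displaymath}

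Next I would substitute this expansion together with the expansion of $K(t, \ol\alpha)$ from Lemma~\ref{enh:3} into the right-hand side of the claimed identity. This yields a double sum over partitions $\lambda, \mu$ with $\ell(\lambda), \ell(\mu) \le d$ of the form
\begin{displaymath}
\frac{1}{d!} \int_T \sum_{\lambda, \mu} [M_\lambda]\, s_\lambda(\alpha)\, s_\mu(\ol\alpha)\, \wt{\rH}_{\bM_\mu}(t)\, \vert \Delta(\alpha) \vert^2 \, d\alpha.
\end{displaymath}
Since the $s_\lambda(\alpha)$ for $\ell(\lambda) \le d$ are exactly the characters of the irreducible polynomial representations of $\GL(d)$, Weyl's integration formula~\eqref{weyl} says that $\frac{1}{d!}\int_T s_\lambda(\alpha) s_\mu(\ol\alpha) \vert\Delta(\alpha)\vert^2 d\alpha = \delta_{\lambda,\mu}$. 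Applying this term by term collapses the double sum to $\sum_{\ell(\lambda) \le d} [M_\lambda] \wt{\rH}_{\bM_\lambda}(t)$, which is precisely $\wt{\rH}_M(t)$ by definition (again using $d \ge \ell(M)$ so that all relevant $\lambda$ are included).

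The one point requiring care — and the main obstacle — is the justification for exchanging the infinite sums with the integral (i.e., term-by-term application of Weyl's formula). The series $\rH_{M(\bC^d),T}(\alpha)$ is genuinely infinite, and $K(t,\ol\alpha)$ is a formal power series in the $t_i$ with Laurent-polynomial coefficients in $\alpha$; so the identity should be read as an equality of formal power series in the $t_i$ with coefficients in $\rK(X)$, and the integral $\int_T$ means extracting the constant term in $\alpha$. With this reading, fixing a monomial $t^\mu$ isolates a single finite term $\sum_{|\lambda|=|\mu|,\ \ell(\lambda)\le d}\tr(c_\mu \mid \bM_\lambda) s_\lambda(\ol\alpha)/\mu!$ from $K(t,\ol\alpha)$, and pairing against $\rH_{M(\bC^d),T}(\alpha)$ — whose relevant graded piece is also finite — the constant-term operation is a finite computation to which~\eqref{weyl} applies directly. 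I would state this convergence/formality bookkeeping explicitly but briefly, citing the parallel argument in \cite[\S 3.1]{delta}, and otherwise the proof is the short chain of substitutions above.
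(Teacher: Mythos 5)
Your proof is correct and takes essentially the same approach as the paper's: decompose $M$ into isotypic pieces, compute both $\wt{\rH}_M$ and $\rH_{M(\bC^d),T}$ in terms of that decomposition, and then collapse the double sum via Weyl's integration formula together with Lemma~\ref{enh:3}. The only difference is cosmetic (you argue from right-hand side to left, and you spell out the formal-series bookkeeping that the paper leaves implicit).
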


\begin{proof}
Let $a_{\lambda}$ be the multiplicity of $\bM_{\lambda}$ in $M$. Note that $a_{\lambda}=0$ if $d<\ell(\lambda)$, by definition of $d$. We have
\begin{displaymath}
M=\bigoplus_{\lambda} \bM_{\lambda}^{\oplus a_{\lambda}}, \qquad \textrm{and} \qquad
\wt{\rH}_M(t)=\sum_{\lambda} a_{\lambda} \wt{\rH}_{\bM_{\lambda}}(t).
\end{displaymath}
On the other hand,
\begin{displaymath}
M(\bC^d)=\bigoplus_{\lambda} \bS_{\lambda}(\bC^d)^{\oplus a_{\lambda}}, \qquad \textrm{and} \qquad
\rH_{M(\bC^d), T}(\alpha)=\sum_{\lambda} a_{\lambda} s_{\lambda}(\alpha).
\end{displaymath}
We therefore have
\begin{displaymath}
\begin{split}
\wt{\rH}_M(t) &= \sum_{\lambda} a_{\lambda} \wt{\rH}_{\bM_{\lambda}}(t)\\
&= \frac{1}{d!} \int_T \left( \sum_{\lambda} a_{\lambda} s_{\lambda}(\alpha) \right) \left( \sum_{\mu} s_{\mu}(\ol{\alpha}) \wt{\rH}_{\bM_{\mu}}(t) \right) \vert \Delta(\alpha) \vert^2 d \alpha\\
&= \frac{1}{d!} \int_T \rH_{M(\bC^d), T}(\alpha) K(t, \ol{\alpha}) \vert \Delta(\alpha) \vert^2 d\alpha.
\end{split}
\end{displaymath}
To go from the first line to the second, we used Weyl's integration formula \eqref{weyl}. Both sums in the second equation are over partitions with $\ell \le d$; in particular, both $s_{\lambda}$ and $s_{\mu}$ are characters of (non-zero) irreducible representations of $\GL(d)$. To go from the second line to the third, we used Lemma~\ref{enh:3}.
\end{proof}

We need one more lemma before proving the theorem.

\begin{lemma}
\label{enh:2}
Let $u_1, \ldots, u_d$ be indeterminates.  We then have
\begin{displaymath}
\int_T \frac{1}{\prod_{i=1}^d (1-\alpha_i u_i)} K(t, \ol{\alpha}) d\alpha
=\prod_{i=1}^d \exp \left( \sum_{n=1}^{\infty} u_i^n t_n \right).
\end{displaymath}
\end{lemma}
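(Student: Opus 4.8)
The plan is to observe that, after unwinding the definition of $K(t,\ol{\alpha})$, the entire integrand splits as a product of functions, one for each torus coordinate $\alpha_i$, so that the constant-term functional $\int_T$ factors as a $d$-fold product of one-variable constant-term functionals; each of these is then a short explicit computation.

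First I would rewrite $K(t,\alpha)$ in exponential form. Grouping the sum over partitions $\lambda$ by the multiplicity vector $(m_1(\lambda), m_2(\lambda), \dots)$ and using $t^\lambda/\lambda! = \prod_n t_n^{m_n(\lambda)}/m_n(\lambda)!$ together with $p_\lambda = \prod_n p_n^{m_n(\lambda)}$, one gets
\[
K(t,\alpha) = \sum_{m_1, m_2, \dots \ge 0} \prod_{n \ge 1} \frac{(p_n(\alpha)\, t_n)^{m_n}}{m_n!} = \exp\Big(\sum_{n \ge 1} p_n(\alpha)\, t_n\Big).
\]
Since $p_n(\ol{\alpha}) = \sum_{i=1}^d \alpha_i^{-n}$, this yields $K(t,\ol{\alpha}) = \prod_{i=1}^d \exp\big(\sum_{n\ge 1} \alpha_i^{-n} t_n\big)$. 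Multiplying by $\prod_{i=1}^d (1-\alpha_i u_i)^{-1}$, the integrand becomes $\prod_{i=1}^d g(\alpha_i; u_i)$, where $g(\alpha; u) = (1-\alpha u)^{-1}\exp\big(\sum_{n \ge 1}\alpha^{-n} t_n\big)$ involves only the single torus variable $\alpha$.

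The second step is to note that taking the constant term (in the $\alpha_i$) of a product of series in disjoint sets of variables gives the product of the constant terms, so $\int_T \prod_i g(\alpha_i; u_i)\, d\alpha = \prod_{i=1}^d c(u_i)$, where $c(u)$ is the constant term in $\alpha$ of $g(\alpha; u)$. The last step is to compute $c(u)$: expanding $(1-\alpha u)^{-1} = \sum_{k \ge 0}\alpha^k u^k$ and $\exp(\sum_n \alpha^{-n}t_n) = \sum_\mu (t^\mu/\mu!)\,\alpha^{-|\mu|}$, the constant term matches $k = |\mu|$ and gives $\sum_\mu u^{|\mu|} t^\mu/\mu!$, which by the same multiplicity-vector bookkeeping equals $\exp(\sum_{n \ge 1} u^n t_n)$. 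Taking the product over $i = 1,\dots,d$ gives the asserted identity.

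Throughout, everything should be read as a formal power series in the $t_n$ and $u_i$ with Laurent-polynomial coefficients in the $\alpha_i$, so that in each fixed total degree only finitely many terms contribute; the only point needing a sentence of care is this formal justification of rearranging the sums and of factoring $\int_T$ across the product. Beyond that, the argument is a direct expansion with no real obstacle.
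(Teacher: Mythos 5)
Your proof is correct, and it reorganizes the computation in a way that is slightly cleaner than the paper's. The key difference is that you recognize up front the exponential identity $K(t,\alpha)=\exp\bigl(\sum_{n\ge 1}p_n(\alpha)t_n\bigr)$, which lets you factor the entire integrand over the $d$ torus coordinates as $\prod_i g(\alpha_i;u_i)$ \emph{before} extracting the constant term; the $d$-fold constant-term functional then splits trivially, and you are left with a single one-variable computation. The paper instead expands $K$ term-by-term in partitions $\lambda$, computes the constant term to get $\sum_{x,\lambda}(\text{coeff of }\alpha^x\text{ in }p_\lambda)\,u^x t^\lambda/\lambda!$, and only then achieves the factorization via a combinatorial bookkeeping: the coefficient of $\alpha^x$ in $p_\lambda$ is a sum of multinomial coefficients $\lambda!/(\mu_1!\cdots\mu_d!)$ over tuples of partitions with $\mu_1\cup\cdots\cup\mu_d=\lambda$ and $|\mu_i|=x_i$. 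Both routes reduce to the same single-variable generating function $\sum_\mu z^{|\mu|}t^\mu/\mu!=\exp(\sum_k z^k t_k)$ at the end. Your version avoids the intermediate multinomial identity at the cost of invoking (and briefly justifying) the exponential form of $K$; the paper's version is more elementary in that it just expands and counts. Both are fine; yours is arguably the more transparent exposition of why the integral factors.
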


\begin{proof}
For $x \in \bZ_{\ge 0}^d$, let $\alpha^x$ denote $\alpha_1^{x_1} \cdots \alpha_d^{x_d}$, and similarly define $u^x$. Then we have
\begin{displaymath}
\begin{split}
\int_T \frac{1}{\prod_{i=1}^d (1-\alpha_i u_i)} K(t, \ol{\alpha}) d\alpha
&=\sum_{x} \textrm{(the coefficient of $\alpha^x$ in $K(t, \alpha)$)} u^x \\
&=\sum_{x, \lambda} \textrm{(the coefficient of $\alpha^x$ in $p_{\lambda}$)} \frac{u^x t^{\lambda}}{\lambda!}.
\end{split}
\end{displaymath}
Here $x$ varies over $\bZ_{\ge 0}^d$, while $\lambda$ varies over all partitions.  Now, a simple computation shows that the coefficient of $\alpha^x$ in $p_{\lambda}$ is given by
\begin{displaymath}
\sum_{\mu_1, \ldots, \mu_d} \frac{\lambda!}{\mu_1! \cdots \mu_d!},
\end{displaymath}
where the sum is taken over partitions $\mu_1, \ldots, \mu_d$ such that $\mu_1 \cup \cdots \cup \mu_d=\lambda$ and $\vert \mu_i \vert=x_i$.  We thus find
\begin{align*}
\sum_{x, \lambda} \textrm{(the coefficient of $\alpha^x$ in $p_{\lambda}$)} \frac{u^x t^{\lambda}}{\lambda!}
&=\sum_{\mu_1, \ldots, \mu_d} u_1^{\vert \mu_1 \vert} \cdots u_d^{\vert \mu_d \vert} \frac{t^{\mu_1} \cdots t^{\mu_d}}{\mu_1! \cdots \mu_d!} \\
&= \left( \sum_{\mu} u_1^{\vert \mu \vert} \frac{t^{\mu}}{\mu!} \right) \cdots \left( \sum_{\mu} u_d^{\vert \mu \vert} \frac{t^{\mu}}{\mu!} \right).
\end{align*}
Now, we have
\begin{displaymath}
\begin{split}
\sum_{\mu} z^{\vert \mu \vert} \frac{t^{\mu}}{\mu!}
&=\sum_{n_1 \ge 0,\ n_2 \ge 0,\ \ldots} z^{n_1+2n_2+\cdots} \frac{t_1^{n_1}}{n_1!} \frac{t_2^{n_2}}{n_2!} \cdots \\
&= \prod_{k=1}^{\infty} \left[ \sum_{n \ge 0} z^{kn} \frac{t_k^n}{n!} \right]
= \prod_{k=1}^{\infty} \exp(z^k t_k).
\end{split}
\end{displaymath}
Combining the previous two equations gives the stated result.
\end{proof}

\begin{proof}[Proof of Theorem~\ref{thm:enhanced-rational}]
Put
\begin{displaymath}
\phi_0(x, t)=\sum_{i \ge 1} x^i t_i,
\end{displaymath}
and, more generally,
\begin{displaymath}
\phi_n(x, t)=\sum_{i \ge 1} i(i-1) \cdots (i-n+1) x^{i-n} t_i=\frac{d^n}{dx^n} \phi_0(x, t).
\end{displaymath}
We have $\phi_n(1, t)=T_n$ and $\phi_n(0, t)=n! t_n$ (with $t_0$ interpreted as 0).  Lemma~\ref{enh:2} can be restated as
\begin{align} \label{eqn:K-exp}
\int_T \frac{1}{\prod_{i=1}^d (1-\alpha_i u_i)} K(t, \ol{\alpha}) d\alpha=\exp(\phi_0(u_1, t)+\cdots+\phi_0(u_d, t)).
\end{align}

Finally, suppose we are given an expression
\begin{displaymath}
\int_T \frac{p(\alpha)}{\prod_{i=1}^d (1-\alpha_i)^{n_i}} K(t, \ol{\alpha}) d\alpha
\end{displaymath}
where $p(\alpha)$ is a polynomial. We claim that it is a polynomial in $\exp(T_0)$ and $\{T_i, t_i \mid i \ge 1\}$. This claim proves the theorem, as we know that $\wt{H}_M(t)$ can be expressed as such an integral by Lemma~\ref{enh:1}.

We now prove the claim. When $n_i=0$ for all $i$, the integral in question is a polynomial in the $t_i$, so there is nothing to show in that case. By linearity and applying polynomial long division in each variable separately, we may assume that $p(\alpha)$ is a sum of monomials $\alpha_1^{m_1} \cdots \alpha_d^{m_d}$ where $m_i < n_i$ for all $i$. 
Now apply $(\frac{\partial}{\partial u_1})^{n_1-1} \cdots (\frac{\partial}{\partial u_d})^{n_d-1}$ to \eqref{eqn:K-exp} and then set $u_i=1$ for all $i$. The result is a polynomial in $\exp(T_0)$ and $\{T_i \mid i \ge 1\}$, which can be seen by using the right side of \eqref{eqn:K-exp}. The left side becomes a scalar multiple of 
\[
\int_T \frac{\alpha_1^{n_1-1} \cdots \alpha_d^{n_d-1}}{\prod_{i=1}^d (1-\alpha_i)^{n_i}} K(t, \ol{\alpha})d\alpha.
\]
For any $m_i \le n_i$, we then also know that
\[
\int_T \frac{\prod_{i=1}^d \alpha_i^{m_i-1} (1-\alpha_i)^{n_i-m_i}}{\prod_{i=1}^d (1-\alpha_i)^{n_i}} K(t, \ol{\alpha})d\alpha
\]
is a polynomial in $\exp(T_0)$ and $\{T_i \mid i \ge 1\}$. Finally, for fixed $n_i>0$, note that the set 
\[
\left\{ \prod_{i=1}^d \alpha_i^{m_i-1} (1-\alpha_i)^{n_i-m_i}\ \bigg|\ 1 \le m_i \le n_i \right\}
\]
is a basis for the span of monomials in $\alpha_1^{k_1} \cdots \alpha_d^{k_d}$ such that $0 \le k_i < n_i$. Hence by linearity and combining our earlier reduction, we conclude that our original expression is a polynomial in $\exp(T_0)$ and $\{T_i, t_i \mid i \ge 1\}$.
\end{proof}

\subsection{Poincar\'e series} \label{ss:poincare}

Given an $A$-module $M$, define its {\bf Poincar\'e series} by
\begin{displaymath}
\rP_M(t,q) = \sum_{n \ge 0} (-q)^n \rH_{\Tor_n^A(M, \bC)}(t).
\end{displaymath}
Setting $q=1$ and multiplying by $\rH_A(t)=e^{dt}$ recovers the Hilbert series $\rH_M(t)$. Note that the Poincar\'e series has non-trivial information about the $A$-module structure of $M$, whereas the Hilbert series only knows about the underlying object of $\cV_X$. The Poincar\'e series does not factor through K-theory, so it is much harder to study than the Hilbert series. The following is our main result about it:

\begin{theorem}
Let $M$ be a finitely generated $A$-module. There exist $f_r(t,q) \in \rK(X)[t,q,q^{-1}]$ such that
\begin{displaymath}
\rP_M(t,q)=\sum_{r=0}^d f_r(t,q) e^{-rqt}.
\end{displaymath}
\end{theorem}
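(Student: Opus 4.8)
The plan is to deduce the result from Theorem~\ref{thm:stdhilbert} via Koszul duality, with the homological variable $q$ entering through a change of variables. Let $B = \lw^{\bullet}(\cE \otimes \bV)$ denote the Tor-algebra $\Tor^{\bA(\cE)}_{\bullet}(\bC, \bC)$; this is a tca generated in degree~$1$ over a bundle of rank $d = \rank \cE$, and $N = \Tor^{\bA(\cE)}_{\bullet}(M, \bC)$ is a $B$-module. (After the sign twist $\bS_{\lambda}(\bV) \mapsto \bS_{\lambda^{\dag}}(\bV)$, $B$ becomes an honest $\bA(\cE)$-type tca and $B$-modules become $\bA(\cE)$-modules with unchanged Hilbert series, since $\dim\bM_{\lambda} = \dim\bM_{\lambda^{\dag}}$; this lets us apply Theorem~\ref{thm:stdhilbert} to $B$-modules below.) The object $N$ carries two gradings, the homological degree~$n$ and the internal ($\cV_X$-)degree. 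Because the complex $\lw^{\bullet}(\cE \otimes \bV) \otimes M$ computing $N$ has its homological-degree-$n$ term concentrated in internal degrees $\ge n$, the same holds for $\Tor^{\bA(\cE)}_n(M, \bC)$. Set $N^{(m)}_n := \Tor^{\bA(\cE)}_n(M, \bC)_{n+m}$ for $m \ge 0$; the $B$-action preserves the offset $m$, so $N^{(m)} := \bigoplus_{n \ge 0} N^{(m)}_n$ is a $B$-submodule of $N$, graded so that its degree-$n$ piece lies in $\cV_X$-degree $n+m$, and $N = \bigoplus_{m \ge 0} N^{(m)}$. The point of the first step is that $\Tor^{\bA(\cE)}_{\bullet}(M, \bC)$ is a \emph{finitely generated} $B$-module --- this is part of the Koszul-duality package of \cite[\S 7]{symu1} (the $N^{(m)}$ are, up to the sign twist and the duality $\bD$, the cohomology modules of $\sF(M) \in \rD^b_{\fgen}(\bA(\cE^{\vee}))$). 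Consequently only finitely many $N^{(m)}$ are nonzero, and each is a finitely generated $B$-module.

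With this structure in hand the remaining steps are formal. First I would record the identity
\[
\rP_M(t, q) = \sum_{m \ge 0} (-q)^{-m}\, \rH_{N^{(m)}}(-qt),
\]
which comes directly from the definition of $\rP_M$: write each internal degree as $n+m$, interchange the summations over $n$ and $m$, and use that $\sum_{n} (\text{class of } N^{(m)}_n)\, s^{n+m}/(n+m)! = \rH_{N^{(m)}}(s)$. Next, Theorem~\ref{thm:stdhilbert} (applied to the $B$-modules $N^{(m)}$, via the sign twist) gives $\rH_{N^{(m)}}(s) = \sum_{r=0}^{d} g^{(m)}_r(s)\, e^{rs}$ with $g^{(m)}_r \in \rK(X)[s]$. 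Substituting $s = -qt$ and collecting the coefficient of $e^{-rqt}$ yields
\[
\rP_M(t, q) = \sum_{r=0}^{d} f_r(t, q)\, e^{-rqt}, \qquad f_r(t, q) = \sum_{m \ge 0} (-q)^{-m}\, g^{(m)}_r(-qt),
\]
which is a finite sum lying in $\rK(X)[t, q, q^{-1}]$; the negative powers of $q$ arise exactly from the factors $(-q)^{-m}$, i.e.\ from the offset parameter $m$. (Alternatively one can avoid the sign twist by running the whole argument with the honest $\bA(\cE^{\vee})$-modules $\rH^m(\sF(M))$ and carrying $\bD$ through.)

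The step I expect to be the main obstacle is the first one: making precise the $B$-module structure on $\Tor^{\bA(\cE)}_{\bullet}(M, \bC)$ and identifying the pieces $N^{(m)}$ with the cohomology of $\sF(M)$ (equivalently of the Koszul dual $\sK(M)$), so as to import the finiteness built into $\rD^b_{\fgen}$. After that, the substitution $s = -qt$, the reindexing, and the $q$-degree bookkeeping are routine. As a consistency check, a free module $M$ yields only the $r = 0$ term, namely the polynomial $\rH_{\Tor^{\bA(\cE)}_0(M, \bC)}(t)$, and $M = \bC$ over a trivial bundle yields $\rP_M = e^{-dqt}$, the $r = d$ term.
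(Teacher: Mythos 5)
Your proposal is correct and follows essentially the same route as the paper. The paper sets $N_k = \rH^{-k}(\sF(M))$, quotes the identity $\rP_M(t,q)=\sum_{k \ge 0}(-q)^{-k}\rH_{N_k}(-qt)$ as ``a simple formal manipulation,'' and then invokes \cite[Theorem~7.7]{symu1} (finiteness of the Fourier transform) together with Theorem~\ref{thm:stdhilbert}; your $N^{(m)}$, cut out by the offset $m = (\text{internal degree}) - (\text{homological degree})$, are (up to the transpose built into $\sK$ and the duality $\bD$) exactly these cohomology modules, your $s=-qt$ reindexing reproduces the cited identity, and your appeal to the Koszul-duality package of \cite[\S 7]{symu1} supplies the same finiteness input. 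The one thing you flag as the ``main obstacle'' --- identifying the $N^{(m)}$ with $\rH^\bullet(\sK(M))$ and importing finite generation --- is precisely what the paper's citation of Theorem~7.7 handles, so your worry is well placed but already resolved in the literature you cite.
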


\begin{proof}
Let $N_k=\rH^{-k}(\sF(M))$, where $\sF(M)$ is the Fourier transform of $M$. A simple formal manipulation gives $\rP_M(t,q)=\sum_{k \ge 0} (-q)^{-k} \rH_{N_k}(-qt)$. The result now follows from the fact that each $N_k$ is finitely generated and only finitely many $N_k$ are non-zero \cite[Theorem~7.7]{symu1} and the structure theorem for $\rH_{N_k}$ (Theorem~\ref{thm:stdhilbert}).
\end{proof}

There is much more one could say about Poincar\'e series and variants using the methods of this paper: for example, one could define an ``enhanced Poincar\'e series'' or a Poincar\'e-variant of the formal character, and prove results about them.

\section{A categorification of rationality} \label{s:cat}

\subsection{Motivation}

Let $\sD \colon \cV \to \cV$ be the Schur derivative. If $F$ is a polynomial functor, then $\sD F$ is the functor assigning to a vector space $V$ the subspace of $F(V \oplus \bC)$ on which $\bG_m$ acts through its standard character (this copy of $\bG_m$ acts by multiplication on $\bC$ only). In terms of sequences of symmetric group representations, the Schur derivative takes a sequence $M=(M_n)_{n \ge 0}$ to the sequence $\sD M$ given by $(\sD M)_n=M_{n+1} \vert_{S_n}$. See \cite[\S 6.4]{expos} for more details. (There the Schur derivative is denoted $\bD$, but this conflicts with our notation for duality functors.)

Let $A=\bA(E)$ and suppose $M$ is a finitely generated $A$-module. One observes that $\rH_{\sD(M)}(t) = \frac{d}{dt} \rH_M(t)$, and so the Schur derivative can be thought of as a categorification of $\frac{d}{dt}$. There is a natural map $E \otimes M \to \sD(M)$. For a subspace $V$ of $E$, let $\partial_V(M)$ be the 2-term complex $[V \otimes M \to \sD(M)]$, where the differential is just the restriction of the previously mentioned map to $V \otimes M$. Then $\rH_{\partial_V(M)}(t) = (\frac{d}{dt} - d) \rH_M(t)$, where $d=\dim(V)$, and so $\partial_V$ is a categorification of the operator $\frac{d}{dt}-d$.

The main result about $\rH_M(t)$ is that it is a polynomial in $t$ and $e^t$. It follows that $\rH_M(t)$ is annihilated by an operator of the form $\prod_{i=1}^n (\frac{d}{dt}-d_i)$ for non-negative integers $d_i$. The discussion of the previous paragraph thus suggests a stronger result, namely, that there exists subspaces $V_1, \ldots, V_n$ such that $\partial_{V_1} \cdots \partial_{V_n}$ annihilates $M$. We will show that this is indeed the case. This is a categorification of the rationality theorem for $\rH_M(t)$.

One may view this as an analogue of the existence of a system of parameters for a finitely generated module. For some discussion of this analogy, see \cite[Remark 5.4.2]{symc1}.

\subsection{The main theorem}

Let $A=\bA(E)$ and let $M$ be an $A$-module. There is then a natural map $E \otimes M \to \sD(M)$. For a subspace $V$ of $E$, we let $\partial_V(M)$ be the 2-term complex $[V \otimes M \to \sD(M)]$. More generally, for a complex $M$ of $A$-modules, we let $\partial_V(M)$ be the cone on the map $V \otimes M \to \sD(M)$. This defines an endofunctor of $\rD^b_{\fgen}(A)$. (To actually get a functor, one should work with the homotopy category of injective objects.) We define a {\bf differential operator} on $\rD^b_{\fgen}(A)$ to be a finite composition of endofunctors of the form $\partial_V$. We say that $M$ {\bf satisfies a differential equation} if there exists a differential operator $\partial$ such that $\partial(M)=0$.

\begin{theorem} \label{thm:diffeq}
Every object of $\rD^b_{\fgen}(A)$ satisfies a differential equation.
\end{theorem}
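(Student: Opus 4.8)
The plan is to reduce to a structural statement about $\rD^b_\fgen(A)$ and then exhibit, for each building block, a differential operator that annihilates it. The natural first move is to understand the effect of $\partial_V$ on formal characters (equivalently Hilbert series): since $\rH_{\sD(M)}(t) = \frac{d}{dt}\rH_M(t)$ and $\rH_{V \otimes M}(t) = \dim(V)\,\rH_M(t)$, the cone $\partial_V(M)$ has $\rH_{\partial_V(M)}(t) = (\frac{d}{dt} - \dim V)\rH_M(t)$. More importantly, I want the \emph{categorified} analogue of the observation driving Theorem~\ref{thm:stdhilbert}: the operator $\partial_V$ should interact nicely with the decomposition of $\rK(\bA(\cE))$ coming from Theorem~\ref{thm:groth}, shifting or annihilating the pieces $\Lambda \otimes \rK(\Gr_r(\cE))$. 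Concretely, if $V$ has dimension $d = \rank \cE$, one expects $\partial_V$ to kill the top piece $c_d$ (which contributes $e^{dt}$ to the Hilbert series, matching $(\frac{d}{dt}-d)e^{dt}=0$) and, more generally, a product $\partial_{V_1}\cdots\partial_{V_n}$ with suitable $\dim V_i$ to annihilate everything.

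The cleanest route is probably induction on $\rank \cE$, using the recollement / localization structure from \cite[\S 6]{symu1} recorded before Corollary~\ref{cor:dim-bound}: every $M \in \rD^b_\fgen(A)$ sits in triangles built from pieces $\rR\Sigma_{>r}\rR\Gamma_{\le r}(M) \in \rD^b_\fgen(A)_r$, and by Theorem~\ref{thm:fchar} / Corollary~\ref{cor:dim-bound} each such piece has $\Theta$ supported in $\sigma$-degree exactly $r$. Since $\partial_V$ is exact (it is a cone construction on the level of complexes) it carries triangles to triangles, so it suffices to prove the theorem for each $M$ lying in a single $\rD^b_\fgen(A)_r$. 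For such $M$, by \cite[Corollary~6.16]{symu1} I may further assume $M = \rR\pi_{r*}(U \otimes \bA(\cQ_r))$ for $U \in \Mod^\fgen_{\Gr_r(\cE)}$ --- i.e. $M$ is ``induced'' from a module over the smaller tca $\bA(\cQ_r)$ of rank $r$ on the Grassmannian. The key computation will then be a base-change/projection-formula identity showing that $\partial_V$ applied to such an induced object is again induced (from a rank-$r$ situation), together with a ``shift down'' statement: choosing $V$ of dimension $r$ appropriately, $\partial_V$ should lower the rank, i.e. land in the span of the pieces $\rD^b_\fgen(A)_{r'}$ with $r' < r$, or outright vanish when $r$ is already~$0$ (where $M$ is a bounded complex of objects of $\cV_X^\fgen$, on which $\partial_{0}$ --- the Schur derivative shifted by the empty subspace, i.e. just $\sD$ composed suitably --- is eventually nilpotent because $\sD$ drops the total degree). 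Iterating, a composition of $\rank\cE + 1$ such operators annihilates $M$.

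I expect the main obstacle to be making the ``$\partial_V$ lowers the rank'' step precise. The functor $\sD$ does not obviously commute with $\rR\pi_{r*}$ or with $- \otimes \bA(\cQ_r)$, so I will need to analyze $\sD(\bA(\cE) \otimes_{?} -)$ via the identity $\sD(\bA(E)) \cong E \otimes \bA(E)$ (Leibniz rule for the Schur derivative on a polynomial algebra) and track how the subbundle $V \subseteq E$ interacts with the tautological sequence $0 \to \cR_r \to \pi^*\cE \to \cQ_r \to 0$. The correct statement is likely: choosing $V$ generically of dimension $r$, the map $V \otimes M \to \sD M$ becomes, after the identification, essentially multiplication by the quotient bundle, and its cone is computed by a Koszul-type complex that, because $\cQ_r$ has rank exactly $r = \dim V$, has homology only in the lower-rank strata --- formally this is where one uses that $m^{(r)}_\lambda([\cQ])$ or the Chern-class bookkeeping from the proof of Theorem~\ref{thm:fchar} collapses. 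A possible fallback, if the geometric argument is cumbersome, is to run the induction purely inside $\rK$-theory first (using that $\partial_V$ acts on $\rK(\bA(\cE))$ as $\frac{d}{dt}-\dim V$ does on Hilbert series, hence a product of $d+1$ of them annihilates the class of any $M$), and then separately upgrade the $\rK$-theoretic vanishing of $\partial(M)$ to an honest vanishing in $\rD^b_\fgen(A)$ --- but that upgrade is exactly the delicate point, since an object with zero class in $\rK$-theory need not be zero, so I believe the structural induction above is unavoidable and is where the real work lies.
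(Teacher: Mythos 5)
Your reduction to the building blocks $\rR\pi_{r*}(\cF \otimes \bA(\cQ_r))$ via \cite[Corollary~6.16]{symu1} is exactly the reduction the paper uses, and you correctly reject the $K$-theory fallback for the right reason. But the mechanism you propose for killing these generators --- the idea that a single $\partial_V$ with $\dim V = r$ ``lowers the rank,'' i.e.\ moves $M$ from the stratum $\rD^b_\fgen(A)_r$ into lower strata --- has a genuine gap and is in fact not how the annihilation works. If $M = \rR\pi_{r*}(\cF \otimes \bA(\cQ_r))$, then $\partial_V(M) = \rR\pi_{r*}(\cF \otimes [V \to \cQ_r] \otimes \bA(\cQ_r))$, which is \emph{still} an object of the same form over the same Grassmannian $Y = \Gr_r(\cE)$, just with the new coefficient complex $\cF \otimes [V \to \cQ_r]$; it does not drop to a lower stratum. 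The two-term complex $[V \to \cQ_r]$ is acyclic only on the open locus of $Y$ where the pulled-back map $V \to \cQ_r$ is an isomorphism, so for $0 < r < \rank\cE$ a single generic $\partial_V$ of dimension $r$ leaves a nonzero object supported on the degeneracy locus.

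What the paper does instead is purely geometric on $Y$: one takes a \emph{product} $\partial_{V_1}\cdots\partial_{V_n}$ chosen so that the degeneracy loci of the maps $V_i \to \cQ_r$ have empty common intersection --- equivalently, for every $y \in Y$ some $V_i(y) \to \cQ_r(y)$ is an isomorphism --- which makes the tensor product $[V_1 \to \cQ_r]\otimes\cdots\otimes[V_n \to \cQ_r]$ acyclic, hence $\partial_{V_1}\cdots\partial_{V_n}(\bA(\cQ_r)) = 0$. The passage from $\bA(\cQ_r)$ to $\cF \otimes \bA(\cQ_r)$ for general $\cF \in \cV_Y^\fgen$ is then handled by a Leibniz-rule computation, $\partial_V(\cF \otimes M) = (\cF \otimes \partial_V M) \oplus (\sD(\cF) \otimes M)$, and induction on $\deg \cF$. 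These two steps --- the covering argument on the Grassmannian and the Leibniz induction --- are the real content, and neither appears in your sketch. Your intuition that the differential operators should be coupled to the stratification is sound, but the stratification enters only through the choice of which Grassmannian $Y$ to work on; the annihilation within a single stratum requires the covering trick, not a rank-lowering phenomenon.
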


\begin{lemma}
Any two differential operators commute $($up to isomorphism$)$. In particular, if $\partial$ is a differential operator and $M$ satisfies a differential equation then $\partial(M)$ also satisfies a differential equation.
\end{lemma}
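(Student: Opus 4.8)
The plan is to reduce everything to the special case of two \emph{elementary} operators $\partial_V$ and $\partial_W$ attached to subspaces $V, W \subseteq E$: once one knows that there is a natural isomorphism $\partial_V \partial_W \cong \partial_W \partial_V$ of endofunctors of $\rD^b_\fgen(A)$, the commutativity of two arbitrary differential operators follows by a bubble-sort argument. Indeed, any differential operator is a composite $\partial_{V_1} \cdots \partial_{V_m}$, and to commute $\partial_{V_1} \cdots \partial_{V_m}$ past $\partial_{W_1} \cdots \partial_{W_k}$ one slides $\partial_{W_1}$ leftward past $\partial_{V_m}, \dots, \partial_{V_1}$ in turn (whiskering the elementary isomorphism by the surrounding functors each time), then repeats with $\partial_{W_2}$, and so on. The final assertion is then immediate: if $\partial'(M) = 0$ and $\partial$ is any differential operator, then $\partial'(\partial(M)) \cong \partial(\partial'(M)) = \partial(0) = 0$, so $\partial(M)$ satisfies the differential equation $\partial'$.

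To prove the elementary case I would work at the level of complexes (of injective $A$-modules, as indicated after the definition of $\partial_V$, so that cones are functorial), recording two structural facts about the Schur derivative $\sD$ and the multiplication map $\mu_N \colon E \otimes N \to \sD N$ of an $A$-module $N$. First, both $\sD$ and $V \otimes (-)$ are additive and exact, hence commute on the nose with the formation of mapping cones; and since $V$ is concentrated in degree $0$ as a polynomial functor, $\sD(V \otimes N) = V \otimes \sD N$. Second, under this identification the map $\sD(\mu_N|_{V \otimes N})$ becomes $\mu_{\sD N}|_{V \otimes \sD N}$, whereas $\mu_{V \otimes N}|_{W \otimes V \otimes N}$ becomes the swap $W \otimes V \xrightarrow{\sim} V \otimes W$ followed by $\mathrm{id}_V \otimes \mu_N|_{W \otimes N}$. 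The appearance of the swap encodes the fact that the two maps $E \otimes E \otimes N \to \sD^2 N$ obtained by multiplying the two tensor factors of $E$ into $N$ in the two possible orders differ precisely by the twist in $A_2$; no sign appears, since these are all constant (degree-$0$) functors.

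Granting this, $\partial_W \partial_V(M)$ is the total complex of the square
\[
\begin{array}{ccc}
W \otimes V \otimes M & \longrightarrow & W \otimes \sD M \\
\downarrow & & \downarrow \\
V \otimes \sD M & \longrightarrow & \sD^2 M
\end{array}
\]
in which the horizontal maps are induced by $\mu|_{V \otimes (-)}$ and the vertical maps by $\mu|_{W \otimes (-)}$, while $\partial_V \partial_W(M)$ is the total complex of the transposed square, with $W \otimes V \otimes M$ replaced by $V \otimes W \otimes M$. Composing the swap $W \otimes V \otimes M \xrightarrow{\sim} V \otimes W \otimes M$ with the standard isomorphism between the total complex of a bicomplex and that of its transpose (using the signs $(-1)^{ij}$ on the $(i,j)$ entry) gives a natural isomorphism $\partial_W \partial_V(M) \cong \partial_V \partial_W(M)$; it is evidently functorial in $M$ and compatible with shifts, hence passes to $\rD^b_\fgen(A)$.

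The real work is in the signs and bookkeeping of the second structural fact and of the iterated cones — conceptually trivial, but the part one must be careful with; it is cleanest either to do it by hand with explicit complexes as above, or to try to repackage $\partial_V$ in terms of a Koszul-type complex built from $\mu$, so that the symmetry in $V$ and $W$ is visible from the start. One small point to verify at the outset is that $\partial_V$ indeed preserves $\rD^b_\fgen(A)$, i.e.\ that $\sD$ preserves finite generation over $A$ — but this is already built into the statement that $\partial_V$ is an endofunctor of $\rD^b_\fgen(A)$.
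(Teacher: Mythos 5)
The paper does not actually supply a proof of this lemma---it is stated and left to the reader---so there is no argument to compare against; I am assessing your proposal on its own terms. Your reduction to the elementary case of $\partial_V,\partial_W$, and the idea of exhibiting $\partial_W\partial_V(M)$ as the total complex of a $2\times 2$ square, is the right approach. But the second of your two ``structural facts'' is false as stated, and this creates a real gap. Under the identification $\sD(V\otimes N)\cong V\otimes\sD N$, the map $\sD(\mu_N|_{V\otimes N})$ does \emph{not} become $\mu_{\sD N}|_{V\otimes\sD N}$: the two maps $V\otimes\sD N\to\sD^2 N$ differ by the natural involution $\tau$ of $\sD^2 N$ that exchanges the two auxiliary $\bG_m$-directions (in the $\FI$-picture, the transposition of the two adjoined elements). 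Concretely, the square for $\partial_W\partial_V(M)$ has bottom edge $\sD(\mu_V)$ and right edge $\mu_W^{\sD M}$, whereas the square for $\partial_V\partial_W(M)$ has right edge $\mu_V^{\sD M}$ and bottom edge $\sD(\mu_W)$; after the corner swap $W\otimes V\cong V\otimes W$ and transposing, the two squares agree everywhere \emph{except} that the maps into $\sD^2 M$ differ by $\tau$. You gesture at this twist in your sentence about $E\otimes E\otimes N\to\sD^2 N$, but the argument as written (``transpose plus $(-1)^{ij}$ signs'') does not incorporate it, and without it the two bicomplexes are simply not transposes of one another.

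The fix is to build $\tau$ into the isomorphism of bicomplexes: take the swap $W\otimes V\cong V\otimes W$ on the $(0,0)$ entry, the identity on the $(0,1)$ and $(1,0)$ entries, and $\tau$ (together with the transposition sign) on the $(1,1)$ entry $\sD^2 M$, and verify this is a morphism of bicomplexes, hence induces an isomorphism of total complexes. Conceptually this is cleanest via the framework of \S\ref{ss:catdiffop}: the differential operators arise from an action of a symmetric tensor category $\cC$ on $\rD^b(A)$, with composition corresponding to $\otimes_\cC$; commutativity of differential operators is then just the symmetry constraint of $\cC$, and the $\tau$ above is exactly the self-symmetry $\tau_{X,X}$ of $X\otimes X$ for $X=E\oplus\bV$.
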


\begin{lemma}
In an exact triangle in $\rD(A)$, if two terms satisfy differential equations then so does the third.
\end{lemma}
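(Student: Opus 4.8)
The plan is to exploit the fact that every differential operator is a triangulated endofunctor. Working in the homotopy category of injective objects as indicated above, each $\partial_V$ is the cone functor attached to the natural transformation $V \otimes (-) \to \sD(-)$, hence is triangulated, and a composition of triangulated functors is triangulated. Two formal properties will be used: a triangulated functor $\partial$ sends exact triangles to exact triangles, and, being additive, satisfies $\partial(0)=0$; moreover $\partial$ commutes with the shift up to natural isomorphism, so if an object $N$ satisfies a differential equation then so does every shift $N[i]$.

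Now suppose we are given an exact triangle $X \to Y \to Z \to X[1]$ two of whose terms satisfy differential equations. Rotating the triangle if necessary (and using the fact that a shift of an object satisfying a differential equation again satisfies one), we may assume the two terms in question are the first two, say $\partial(X)=0$ and $\partial'(Y)=0$ for differential operators $\partial,\partial'$. By the commutativity of differential operators (previous lemma), the composite $\partial\circ\partial'$ satisfies $(\partial\circ\partial')(X)\cong(\partial'\circ\partial)(X)=\partial'(0)=0$ and $(\partial\circ\partial')(Y)=\partial(\partial'(Y))=\partial(0)=0$.

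Applying the triangulated functor $\partial\circ\partial'$ to the triangle yields an exact triangle
\[
(\partial\circ\partial')(X) \to (\partial\circ\partial')(Y) \to (\partial\circ\partial')(Z) \to (\partial\circ\partial')(X)[1]
\]
whose first two terms, and hence whose fourth term, vanish. Therefore $(\partial\circ\partial')(Z)$ is the cone on the zero map between zero objects, which is zero. Thus $Z$ satisfies the differential equation $\partial\circ\partial'$, as desired.

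The only real subtlety is the one already flagged in the text: to make $\partial_V$, and hence each differential operator, an honest functor that genuinely carries exact triangles to exact triangles, one must pass to the homotopy category of injectives. Granting this, the argument above is entirely formal, so I expect no further obstacle.
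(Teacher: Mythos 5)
Your argument is correct, and it is the standard triangulated-category argument one would expect here (the paper itself leaves this lemma unproved). Two small points worth making explicit: first, the reduction in the last step should observe that once two of the three terms of the triangle $(\partial\circ\partial')(X)\to(\partial\circ\partial')(Y)\to(\partial\circ\partial')(Z)\to$ vanish, the connecting map is an isomorphism between a zero object and $(\partial\circ\partial')(Z)$ (equivalently, the cone of an isomorphism is zero), so $(\partial\circ\partial')(Z)=0$; second, the fact that each $\partial_V$ carries exact triangles to exact triangles follows because both $V\otimes(-)$ and $\sD(-)$ are exact and the cone of a natural transformation of chain maps is computed degreewise, so it preserves termwise split short exact sequences of complexes. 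You correctly flag the functoriality issue and correctly invoke the commutativity lemma, which is genuinely needed: without it one only knows $\partial(X)=0$ and $\partial'(Y)=0$ for possibly different operators, and there is no way to apply both to the same triangle unless their composite can be rearranged.
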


Let $r \ge 0$ be an integer, let $Y=\Gr_r(E)$, let $\pi \colon Y \to \Spec(\bC)$ be the structure map, let $\cQ$ be the tautological bundle on $Y$, and let $B=\bA(\cQ)$. For $V \subset E$, we define the differential operator $\partial_V$ on $B$-modules just as for $A$-modules, and we define a differential operator on $\rD(B)$ to be a composition of $\partial_V$'s.

\begin{lemma}
Suppose $M \in \rD^b(B)$ satisfies a differential equation and $\cF \in \cV^{\fgen}_Y$. Then $\cF \otimes^{\rL}_{\cO_Y} M$ also satisfies a differential equation.
\end{lemma}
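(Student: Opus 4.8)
The plan is to reduce the statement, via the structure theory already in place, to the case of a tca $B = \bA(\cQ)$ over $Y = \Gr_r(E)$ where the module is of a very special form, and then to exhibit an explicit differential operator annihilating it. First I would observe that the functors $\partial_V$ are $\cO_Y$-linear in an appropriate sense: there is a projection formula relating $\partial_V(\cF \otimes^{\rL}_{\cO_Y} M)$ to $\cF \otimes^{\rL}_{\cO_Y} \partial_V(M)$, since the Schur derivative $\sD$ commutes with tensoring by an object of $\cV^{\fgen}_Y$ (the Schur derivative is computed ``in the $\bV$-direction'' and does not interact with the $\cO_Y$-coefficients). Granting this, if $\partial = \partial_{V_1} \cdots \partial_{V_n}$ kills $M$, then the same $\partial$ kills $\cF \otimes^{\rL}_{\cO_Y} M$, and we would be done immediately. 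So the only real content is the compatibility of $\partial_V$ with $\cO_Y$-linear tensoring, which should follow from unwinding the definitions of $\sD$ and the natural map $E \otimes M \to \sD(M)$ in the polynomial-functor model.

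Before that, I would record the two preceding lemmas (differential operators commute, and differential equations are preserved in exact triangles) and use them freely: they let me work one cohomology sheaf at a time and reduce $\cF$ to the case $\cF = \bS_\mu(\cQ') \otimes \pi^* \cG$ for the various tautological bundles, by dévissage on $\cV^{\fgen}_Y$. In fact the cleanest route may be: since $\cV^{\fgen}_Y$ is generated (under the operations preserved by the lemmas) by objects of the form $\pi^*\cG$ with $\cG$ a coherent sheaf on $\Spec(\bC)$, i.e. finite-dimensional vector spaces, together with the finitely many $\bS_\mu(\cQ)$, it suffices to treat $\cF$ a finite-dimensional vector space (trivial case, reduces to the hypothesis on $M$) and to handle multiplication by $\bS_\mu(\cQ)$. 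The latter again should be absorbed by the projection-formula observation, since $\bS_\mu(\cQ) \otimes^{\rL}_{\cO_Y} M$ is literally of the form $\cF \otimes^{\rL}_{\cO_Y} M$.

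The step I expect to be the main obstacle is verifying the projection formula $\partial_V(\cF \otimes^{\rL}_{\cO_Y} M) \cong \cF \otimes^{\rL}_{\cO_Y} \partial_V(M)$ at the level of the homotopy category of injectives, so that it is genuinely functorial and not just an isomorphism of objects. One has to check that the natural map $E \otimes (\cF \otimes^{\rL}_{\cO_Y} M) \to \sD(\cF \otimes^{\rL}_{\cO_Y} M)$ is identified, under the canonical isomorphism $\sD(\cF \otimes^{\rL}_{\cO_Y} M) \cong \cF \otimes^{\rL}_{\cO_Y} \sD(M)$, with $\mathrm{id}_{\cF} \otimes (\text{the structure map for } M)$; this is a diagram-chase in the polynomial-functor description of $\sD$ from \cite[\S 6.4]{expos}, using that the $\bG_m$-weight decomposition defining $\sD$ is unaffected by the $\cO_Y$-module structure. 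Once this naturality is in hand, taking cones commutes with $\cF \otimes^{\rL}_{\cO_Y} -$ (as $\cF$ is perfect, being in $\cV^{\fgen}_Y$ with $\cG$ coherent on a point, hence a flat object), and an easy induction on the length of the differential operator annihilating $M$ finishes the proof.
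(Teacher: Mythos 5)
The central mechanism of your proof is the claim that $\sD$, and hence each $\partial_V$, commutes with the functor $\cF \otimes^{\rL}_{\cO_Y} (-)$, justified by the assertion that the Schur derivative ``does not interact with the $\cO_Y$-coefficients.'' This is the gap. An object $\cF \in \cV^{\fgen}_Y$ is not merely an $\cO_Y$-coefficient: it decomposes as $\bigoplus_\lambda \cF_\lambda \otimes \bS_\lambda(\bV)$ with $\cF_\lambda$ coherent on $Y$, and the Schur derivative acts nontrivially on the $\bS_\lambda(\bV)$ factors. Because the tensor product in $\cV_Y$ is $(F \otimes G)(\bV) = F(\bV) \otimes_{\cO_Y} G(\bV)$, one has a Leibniz rule rather than a commutation rule:
\[
\sD(\cF \otimes M) \cong (\sD \cF \otimes M) \oplus (\cF \otimes \sD M),
\]
and consequently $\partial_V(\cF \otimes M) \cong (\cF \otimes \partial_V(M)) \oplus (\sD\cF \otimes M)$, not $\cF \otimes \partial_V(M)$. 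Your argument, as written, simply drops the second summand; it is only correct in the degenerate case where $\cF$ is concentrated in polynomial degree $0$ (so $\sD\cF = 0$), which is far from the general $\cF \in \cV^{\fgen}_Y$ needed to generate $\rD^b_{\fgen}(A)$ in the proof of the main theorem.

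The paper's proof keeps the extra summand and turns it into an asset: $\sD\cF$ has strictly smaller polynomial degree than $\cF$, so applying a differential operator $\partial$ annihilating $M$ to $\cF \otimes M$ produces a finite direct sum of terms $\cF' \otimes M'$ with $\deg \cF' < \deg \cF$ and $M'$ itself satisfying a differential equation; one then inducts on $\deg \cF$. Your proposal does contain the right preliminary reductions (dévissage via the exact-triangle lemma, reduction to $\cF$ flat by induction on Tor-dimension — though note $Y$ is a Grassmannian, not a point, so coherence of $\cF_\lambda$ does not give flatness for free), but the ``projection formula'' you rely on is false, and without replacing it by the Leibniz rule plus induction on the degree of $\cF$, the proof does not close.
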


\begin{proof}
By induction on Tor-dimension, we can assume $\cF$ is $\cO_Y$-flat. We have 
\[
\sD(\cF \otimes M)= (\sD(\cF) \otimes M) \oplus (\cF \otimes \sD(M)).
\]
The map $E \otimes (\cF \otimes M) \to \sD(\cF \otimes M)$ maps into the factor $\cF \otimes \sD(M)$. We thus see that 
\[
\partial_V(\cF \otimes M) = (\cF \otimes \partial_V(M)) \oplus (\sD(\cF) \otimes M).
\]
We thus see that if $\partial$ is a differential operator annihilating $M$ then $\partial(\cF \otimes M)$ is a direct sum of objects of the form $\cF' \otimes M'$, where $\cF'$ has strictly lower degree than $\cF$ and $M'$ is the result of applying some differential operator to $M$. Since each $M'$ satisfies a differential equation, the result follows by induction on the degree of $\cF$.
\end{proof}

\begin{lemma}
$B$ satisfies a differential equation.
\end{lemma}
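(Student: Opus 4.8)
The plan is to compute the composite $\partial_{V_1}\circ\cdots\circ\partial_{V_N}$ applied to $B=\bA(\cQ)$ directly, for a well-chosen finite family of subspaces $V_i\subseteq E$, and show the result is $0$.

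First I would pin down the shape of $\partial_V(B)$. Since $\bA(\cQ)(W)=\Sym_{\cO_Y}(\cQ\otimes W)$, the Schur derivative is the free $B$-module $\sD(B)=\cQ\otimes_{\cO_Y}B$ on the degree-one generators $\cQ$, and under this identification the natural map $E\otimes B\to\sD(B)$ is (up to the swap) $\phi_E\otimes\mathrm{id}_B$, where $\phi_E\colon E\otimes\cO_Y\to\cQ$ is the tautological surjection from $0\to\cR\to E\otimes\cO_Y\to\cQ\to 0$. Writing $\phi_V\colon V\otimes\cO_Y\hookrightarrow E\otimes\cO_Y\to\cQ$ for the restriction and $K_V$ for the two-term complex $[V\otimes\cO_Y\xrightarrow{\phi_V}\cQ]$ of vector bundles on $Y$, this gives $\partial_V(B)=K_V\otimes_{\cO_Y}B$ (equal to $K_V\otimes^{\rL}_{\cO_Y}B$, as $B$ is $\cO_Y$-flat). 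All terms of $K_V$ lie in degree $0$ of $\cV_Y$, so $\sD(K_V)=0$; feeding this into the Leibniz rule $\sD(\cF\otimes M)=\cF\otimes\sD(M)\oplus\sD(\cF)\otimes M$ used in the proof of the preceding lemma, an easy induction gives
\[
\partial_{V_1}\circ\cdots\circ\partial_{V_N}(B)=\bigl(K_{V_1}\otimes_{\cO_Y}\cdots\otimes_{\cO_Y}K_{V_N}\bigr)\otimes_{\cO_Y}B .
\]
Because $\cO_Y$ is a direct summand of $B$, it is enough to make $K_{V_1}\otimes\cdots\otimes K_{V_N}$ acyclic.

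Next I would take the $V_i$ to be $r$-dimensional. A point of $Y=\Gr_r(E)$ is a rank-$r$ quotient $E\to Q$ with kernel $\cR_Q$ of dimension $d-r$, and the fiber of $K_{V_i}$ at this point is the complex $[V_i\to Q]$, which is acyclic precisely when $V_i\to Q$ is an isomorphism, i.e.\ when $V_i$ is a complement to $\cR_Q$. Since every $(d-r)$-dimensional subspace of $E$ has an $r$-dimensional complement, one can choose finitely many $V_i$ so that every point of $Y$ admits some $V_i$ with $V_i\cap\cR_Q=0$: concretely the $\binom{d}{r}$ coordinate $r$-planes work, or abstractly one invokes quasi-compactness of $Y$. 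For such a family, $K_{V_1}\otimes\cdots\otimes K_{V_N}$ is a bounded complex of vector bundles whose fiber at every point of $Y$ is acyclic (one tensor factor is an acyclic bounded complex of finite-dimensional spaces). A complex of vector bundles that is acyclic on all fibers is acyclic, by a standard Nakayama argument over the local rings of $Y$, so $\partial_{V_1}\circ\cdots\circ\partial_{V_N}(B)=0$. (The degenerate cases are visible here: $r=0$ gives $\partial_0(B)=\sD(B)=0$, and $r=d$ gives $\partial_E(\bA(E))=0$.)

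The steps I expect to be routine are the identification $\partial_V(B)=K_V\otimes B$, the iteration (whose only real content is that $\sD$ annihilates $K_V$, so no stray $\sD(\cF)\otimes B$ terms accumulate), and the passage from fiberwise acyclicity to acyclicity. The obstacle is conceptual rather than technical: one must not try to simplify $B$ incrementally — e.g.\ $\partial_E(B)\cong\cR\otimes B[1]$ merely trades $B$ for $\cR\otimes B$ and never terminates — but instead collapse the whole operator into a single tensor product of two-term complexes on $Y$ and kill it by a fiberwise geometric argument.
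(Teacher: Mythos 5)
Your proof is correct and is essentially the paper's own argument: identify $\partial_V(B)=[V\otimes\cO_Y\to\cQ]\otimes_{\cO_Y}B$, observe that the two-term complex lives in degree $0$ of $\cV_Y$ so it pulls out of further differential operators, and then choose finitely many $V_i$ so that at every point $y\in Y$ some $V_i(y)\to\cQ(y)$ is an isomorphism, making the tensor product of the two-term complexes (a bounded complex of vector bundles, fiberwise acyclic) acyclic. The only small remark is that the reduction to acyclicity of $K_{V_1}\otimes\cdots\otimes K_{V_N}$ is most cleanly justified by $\cO_Y$-flatness of $B$ rather than the direct-summand observation, but this does not affect the argument.
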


\begin{proof}
We have $\partial_V(B)=[V \to \cQ] \otimes B$. Since $[V \to \cQ]$ is a complex of $\cO_Y$-modules, it pulls out of differential operators. We thus see that if $\partial=\partial_{V_1} \cdots \partial_{V_n}$ then
\begin{displaymath}
\partial(B) = [V_1 \to \cQ] \otimes \cdots \otimes [V_n \to \cQ] \otimes B.
\end{displaymath}
For appropriate choices of the $V_i$'s, the above complex is acyclic: indeed, simply choose the $V_i$'s so that for every $y \in Y$ there is some $i$ for which $V_i(y) \to \cQ(y)$ is an isomorphism.
\end{proof}

\begin{proof}[Proof of Theorem~\ref{thm:diffeq}]
It is clear that differential operators commute with $\rR \pi_*$. We thus see that if $\cF \in \cV_Y^{\fgen}$ then the object $\rR \pi_*(\cF \otimes^{\rL}_{\cO_Y} B)$ of $\rD^b_{\fgen}(A)$ satisfies a differential equation. These objects generate $\rD^b_{\fgen}(A)$ by \cite[Corollary~6.16]{symu1} (note that by \cite[Remark~6.15]{symu1}, one can take the $\cF$ to be $\cO_Y$-flat). Thus all objects of $\rD^b_{\fgen}(A)$ satisfy a differential equation.
\end{proof}

\subsection{Differential operators and duality}

Let $M$ be a finitely generated $\bA(E)$-module. By Proposition~\ref{prop:hilbduality}, we have
\[
\rH_{\sF_E(M)}(t) = e^{dt} \rH_M(-t).
\]
For any differentiable function $f(t)$, we have the identity
\[
\left(\frac{d}{d t} - d \right) e^{dt} f(-t) = -e^{dt} \frac{d}{dt} f(-t).
\]
Hence, if $\partial$ is a differential operator that annihilates $f(t)$, then $\wt{\partial}$, obtained by applying the substitution $\frac{d}{dt} \mapsto \frac{d}{dt} - d$ to $\partial$, is a differential operator that annihilates $e^{dt} f(-t)$.

We can realize this identity using differential operators as follows. Pick a minimal free resolution $\bF_\bullet$ of $M$. Note that 
\[
\sD(V \otimes A) = (\sD(V) \otimes A) \oplus (V \otimes E \otimes A).
\]
Furthermore, for any subspace $V \subset E$, we have $\sF_E \circ \partial_V[1] = \partial_{(E/V)^*} \circ \sF_E$ (see lemma below). This implies that if $\partial$ annihilates $M$, then $\wt{\partial}$ annihilates $\sF_E(M)$, where $\wt{\partial}$ is obtained from $\partial$ by substitution of $\partial_{(E/V)^*}$ into $\partial_V$.

\begin{lemma}
For $V \subset E$ we have an isomorphism of endofunctors $\sF_E \circ \partial_V[1] \cong \partial_{(E/V)^*} \circ \sF_E$ of $\rD(A)$.
\end{lemma}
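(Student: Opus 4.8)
The plan is to isolate the case $V=E$ (equivalently $V=0$), prove it using the Koszul‑complex model of $\sF_E$, and deduce the general case via the octahedral axiom. As in the remark after Theorem~\ref{thm:diffeq} one works in the homotopy category of complexes of injectives, so that $\partial_V$, its iterates, and all the mapping cones below are genuine functors; then $\sF_E\circ\partial_V[1]$ and $\partial_{(E/V)^*}\circ\sF_E$ are contravariant triangulated functors. Since $\partial_0=\sD$ and $(E/0)^*=E^*$, the $V=0$ case asks for a natural isomorphism $\sF_E\circ\sD[1]\cong\partial_{E^*}\circ\sF_E$; the $V=E$ case (where $(E/E)^*=0$ and $\partial_0=\sD$) asks for $\sF_E\circ\partial_E[1]\cong\sD\circ\sF_E$. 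These two are readily seen to be equivalent by applying $\sF_E$ to the defining triangle $\partial_E M=[E\otimes M\to\sD M]$, so it is enough to establish one of them.

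For this, recall from \cite[\S 7]{symu1} that $\sF_E=\bD\circ(-)^\dag\circ\sK_E$, and that $\sK_E(M)$ is computed, after the repackaging of internal degree into cohomological degree of \cite[Prop~7.1]{symu1}, from the Koszul complex $\lw^\bullet(E\otimes\bV)\otimes M$; this complex computes $T(M)=M\otimes^{\rL}_{\bA(E)}\cO_X$, as in the proof of Proposition~\ref{prop:fchar-fourier}. Applying the Leibniz rule $\sD(P\otimes Q)=(\sD P\otimes Q)\oplus(P\otimes\sD Q)$ and the identity $\sD(\lw^k(E\otimes\bV))\cong\lw^{k-1}(E\otimes\bV)\otimes E$ to the Koszul complex exhibits $\sD T(M)$ as a mapping cone of the map $E\otimes T(M)\to T(\sD M)$ induced by the multiplication $E\otimes M\to\sD M$; thus there is a natural exact triangle $E\otimes T(M)\to T(\sD M)\to\sD T(M)\to E\otimes T(M)[1]$. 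Now apply $\bD\circ(-)^\dag$: this functor is exact and contravariant, it commutes with $\sD$ (affecting the $\bV$-dependence only by transposition, and the multiplicity spaces only by $\bD$), and it sends $E\otimes(-)$ to $E^*\otimes(-)$. Rotating the image triangle and passing from $\sK_E$ back to $\sF_E$—a step that shifts cohomological degree according to internal degree and therefore interacts nontrivially with $\sD$, which is exactly where the $[1]$ in the statement originates—turns it into the defining triangle $\partial_{E^*}(\sF_E M)=[E^*\otimes\sF_E M\to\sD\sF_E M]$. This yields $\sF_E\circ\sD[1]\cong\partial_{E^*}\circ\sF_E$, and hence also the $V=E$ case.

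For general $V$, apply the octahedral axiom to the factorization $V\otimes M\hookrightarrow E\otimes M\to\sD M$; with cones $(E/V)\otimes M$, $\partial_V M$, $\partial_E M$ of the three maps it gives a natural triangle $(E/V)\otimes M\to\partial_V M\to\partial_E M\to(E/V)\otimes M[1]$. Apply $\sF_E$, use $\sF_E((E/V)\otimes M)=(E/V)^*\otimes\sF_E M$ (naturally), rotate and shift by $[1]$, and substitute the base-case isomorphism $\sF_E(\partial_E M)[1]\cong\sD(\sF_E M)$: the result is a triangle whose first two terms and connecting map agree with those of the defining triangle $\partial_{(E/V)^*}(\sF_E M)=[(E/V)^*\otimes\sF_E M\to\sD\sF_E M]$—the agreement of the maps being the assertion that $\sF_E$ carries the octahedron's connecting map to the multiplication $(E/V)^*\otimes\sF_E M\to\sD\sF_E M$, which one checks by unwinding the definitions. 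Hence the third terms are isomorphic, i.e.\ $\sF_E\circ\partial_V[1]\cong\partial_{(E/V)^*}\circ\sF_E$, and the isomorphism is natural because everything was constructed at the level of honest complexes. As a sanity check, on a free module $U\otimes\bA(E)$ one has $\partial_V(U\otimes\bA(E))\cong(\sD U\otimes\bA(E))\oplus(U\otimes(E/V)\otimes\bA(E))$ while $\sF_E(U\otimes\bA(E))$ is a trivial $\bA(E^*)$-module, and both sides of the claimed isomorphism are computed directly to coincide. The one genuine difficulty is the bookkeeping—tracking the shift $[1]$ through the $\sK_E$-repackaging, controlling the signs introduced by $(-)^\dag$ on complexes, and verifying the connecting-map identification—rather than any conceptual obstacle.
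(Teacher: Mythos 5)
Your proof is correct and takes essentially the same route as the paper: both proofs use the Koszul-complex model of $\sF_E$ together with the Leibniz rule to establish that $\sD(\sF_E(M))\cong\sF_E(\partial_E(M))$ up to shift (the base case), and then bootstrap to general $V\subset E$ by identifying $\partial_{(E/V)^*}(\sF_E(M))$ as $\sF_E$ of a cone involving $\partial_E(M)$ and $(E/V)\otimes M$. The only cosmetic difference is that you invoke the octahedral axiom explicitly for the bootstrap step, where the paper carries out the same cone manipulation informally, and you phrase the base case as the lemma for $V=E$ rather than as a standalone computation of $\sD\circ\sF_E$.
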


\begin{proof}
(We omit many of the details in this proof to keep it short.) By definition, we have
\begin{displaymath}
\sF_E(M) = (M \otimes_A \bK)^{R,\vee,\dag},
\end{displaymath}
where $\bK=\bK(E)$ is the Koszul complex and $(-)^\vee$ denotes vector space duality on multiplicity spaces. Using the fact that $\sD$ satisfies the Leibniz rule and $\sD(\Sym(E[1]))=E[1] \otimes \Sym(E[1])$, we find
\begin{displaymath}
\sD((M \otimes_A \bK)^{R,\vee,\dagger}) = ((\sD(M) \oplus (E[1] \otimes M)) \otimes_A \bK)^{R,\vee,\dagger} = \sF_E(\sD(M) \oplus (E[1] \otimes M)).
\end{displaymath}
The above identity neglects the differentials: in fact, the complex inside the Fourier transform is actually the cone on the canonical map $E \otimes M \to \sD(M)$, namely $\partial_E(M)$. We thus find that $\partial_{(E/V)^*}(\sF_E(M))$ is the cone on the natural map
\begin{displaymath}
\sF_E((E/V) \otimes M) = (E/V)^* \otimes \sF_E(M) \to \sD(\sF_E(M)) = \sF_E(\partial_E(M)).
\end{displaymath}
A computation shows that this map is just $\sF_E$ applied to the canonical map $\partial_E(M) \to E/V \otimes M$. (Recall that $\partial_E(M)$ is the cone of the map $E \otimes M \to \sD(M)$; the map here comes from the canonical map $E \otimes M \to E/V \otimes M$.) The cone of the map $\partial_E(M) \to E/V \otimes M$ is quasi-isomorphic to the cone of $V \otimes M \to \sD(M)$, i.e., $\partial_V(M)$. This completes the proof.
\end{proof}

\subsection{The category of differential operators} \label{ss:catdiffop}

We end this section with some additional thoughts on differential operators. It could be interesting to pursue these observations further in the future.

Write $E$ for the endofunctor $M \mapsto E \otimes M$ of $\Mod_A$, and write $\sD$ for the endofunctor given by the Schur derivative. There is then a natural transformation $E \to \sD$, from which all of the differential operator structure derives. Let $G \subset \GL(E \oplus \bV)$ be the subgroup fixing all elements of $E$, and let $\cC$ be the category of polynomial representations of $G$. Then $\cC$ is the universal tensor category having an object $X$ and a morphism $E \to X$ (one takes $X=E \oplus \bV$): that is, given any tensor category $\cD$ and a morphism $E \to X$ in $\cD$, there is a unique left-exact tensor functor $\cC \to \cD$ mapping $E \to E \oplus \bV$ to $E \to X$. (The group $G$ is an example of a ``general affine group,'' and the universal property of $\cC$, in the $d=1$ case, can be found in \cite[\S 5.4]{infrank}. Note that $E$ is simply isomorphic to a direct sum of $d$ copies of the unit object of $\cC$.)

The universal property of $\cC$ furnishes a functor $\cC \to \End(\Mod_A)$ taking $E \to E \oplus \bV$ to the transformation $E \to \sD$ discussed above. Alternatively, we can think of this functor as an action $\cC \times \Mod_A \to \Mod_A$. This action induces one on derived categories $\rD^b(\cC) \times \rD^b(A) \to \rD^b(A)$. For $V \subset E$, the 2-term complex $[V \to X]$ in $\rD^b(\cC)$ acts on $\rD^b(A)$ as the differential operator $\partial_V$. The tensor product structure on $\rD^b(\cC)$ corresponds to composition of differential operators.

Our main theorem on differential equations states that every object of $\rD^b_{\fgen}(A)$ is annihilated by some object of $\rD^b_{\fgen}(\cC)$. It would be interesting if this result could be made more precise. For example, given $M \in \rD^b_{\fgen}(A)$, what does the annihilator in $\rD^b_{\fgen}(\cC)$ look like? Is it principal, as a tensor ideal?

Another potentially interesting observation: $\cC$ is equivalent to both the category $\Mod_A^0$ of $A$-modules supported at~0 and the category $\Mod_A^{\gen}$ of ``generic'' $A$-modules (see \cite[Propositions 5.4, 5.6]{symu1}). Is this more than a coincidence? Does the action of $\cC$ on $\Mod_A$ come from, or extend to, an interesting action of $\Mod_A$ on itself? We have not fully investigated these questions.

\section{D-finiteness of Hilbert series of bounded modules} \label{s:dfin}

Recall that a power series $f \in \bC \lbb t \rbb$ is {\bf D-finite} if it satisfies a differential equation of the form
\begin{displaymath}
\sum_{i=0}^n p_i(t) \frac{d^if}{dt^i}=0,
\end{displaymath}
where each $p_i$ is a polynomial in $t$. See \cite[\S 6.4]{stanley} for some basic properties and examples. In this section, we prove the following theorem.

\begin{theorem}
\label{thm:dfin}
Let $A$ be a finitely generated tca with $A_0=\bC$ and let $M$ be a finitely generated $A$-module which is bounded. Then $\rH_M(t)$ is D-finite.
\end{theorem}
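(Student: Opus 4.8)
The goal is to show that $\rH_M(t)$ is D-finite when $A$ is a finitely generated tca with $A_0 = \bC$ and $M$ is a finitely generated bounded $A$-module. The key structural input is boundedness: $\ell(M) < \infty$ means that, after evaluating Schur functors on $\bC^d$ for $d = \ell(M)$ (or any larger $d$), the module $M(\bC^d)$ loses no information, in the sense that $\rH_M(t)$ can be recovered from the $T$-equivariant Hilbert series $\rH_{M(\bC^d), T}(\alpha)$ by the integration formula of Lemma~\ref{enh:1} (specialized to the ordinary Hilbert series, i.e.\ setting $t_i = 0$ for $i \ge 2$, so that $K(t,\alpha)$ becomes $\sum_\lambda p_\lambda(\alpha) t^{|\lambda|}/z_\lambda$, which has a simple product form). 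Unlike in \S\ref{ss:elementary}, here $A$ need \emph{not} be generated in degree~1, so $A(\bC^d)$ is merely a finitely generated graded $\bC$-algebra with a $\GL(d)$-action, and $M(\bC^d)$ a finitely generated graded module over it.

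\emph{First}, I would reduce to a statement about $\rH_{M(\bC^d),T}(\alpha)$. Following \cite[Lemma~3.3]{delta}, take a finite $\GL(d)$-equivariant graded free resolution of $M(\bC^d)$ over $A(\bC^d)$, and then a finite $T$-equivariant graded free resolution of $A(\bC^d)$ over a polynomial ring $\bC[x_1,\dots,x_N]$ on homogeneous generators whose $T$-weights are some monomials $\beta_1,\dots,\beta_N$. This expresses $\rH_{M(\bC^d),T}(\alpha)$ as a rational function $h(\alpha) / \prod_{j=1}^N (1-\beta_j)$ with $h$ a Laurent polynomial in the $\alpha_i$ — exactly as in the degree-1 case but with the factors $(1-\alpha_i)$ replaced by $(1-\beta_j)$ for more general monomials $\beta_j$ in the $\alpha_i^{\pm 1}$. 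Since $M$ is bounded, $\rH_M(t)$ is obtained from this by the Weyl integration formula against (the non-enhanced specialization of) $K(t,\ol\alpha)$, which factors as $\prod_{i=1}^d \frac{1}{1-\alpha_i^{-1}t}$ after the substitution $\alpha \mapsto \ol\alpha$, or more precisely we get $\rH_M(t) = \frac{1}{d!}\int_T \frac{h(\alpha)}{\prod_j(1-\beta_j)} \prod_i \exp(t \ol\alpha_i^{\,?})\cdots |\Delta(\alpha)|^2\,d\alpha$; the relevant point is that $K(t,\ol\alpha)$ in the non-enhanced case is $\prod_{i=1}^d e^{t/\alpha_i}$ up to bookkeeping.

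\emph{Second}, I would prove that such an integral is D-finite in $t$. The strategy: the class of D-finite power series is closed under sums, products, and the Hadamard-type operation of extracting a diagonal / taking the constant term in auxiliary variables of a D-finite series (see \cite[\S 6.4]{stanley}, especially the results on D-finiteness and the fact that the constant term — i.e.\ the $\int_T$ — of a series that is D-finite in $t$ and rational in the $\alpha_i$ is D-finite in $t$). Concretely: $\prod_{i} e^{t/\alpha_i}$ is D-finite in $t$ with coefficients that are Laurent polynomials in the $\alpha_i$; multiplying by the rational function $h(\alpha)/\prod_j(1-\beta_j)$ and by $|\Delta(\alpha)|^2$ keeps the series D-finite in $t$ over the field $\bC(\alpha_1,\dots,\alpha_d)$; and taking $\frac{1}{d!}\int_T(-)d\alpha$, i.e.\ extracting the coefficient of $\alpha^0$, preserves D-finiteness because the constant-term operator applied to a series D-finite in $t$ with rational-function coefficients yields a D-finite series in $t$. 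One must be slightly careful that the individual $\beta_j$ may involve negative powers of the $\alpha_i$ (boundedness does not force $A$ to be generated in non-negative weights once we only fix $d = \ell(M)$); this is handled by clearing denominators / working in the ring of Laurent series or by choosing $d$ large enough that all generator weights of $A(\bC^d)$ are dominant, which one can arrange.

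\emph{Main obstacle.} The crux is the closure statement: that extracting the $T$-invariant part (constant term) of a power series in $t$ whose coefficients are \emph{rational} functions of the $\alpha_i$ — not just polynomials — and which is D-finite in $t$ over $\bC(\alpha)$, produces a D-finite series in $t$. When the coefficients are polynomials this is the standard ``D-finiteness is preserved under specialization/constant term'' fact; with rational-function coefficients one needs the poles $(1-\beta_j)^{-1}$ to be handled, e.g.\ by expanding each $\frac{1}{1-\beta_j}$ as a geometric series in a suitable cone and invoking that diagonals of multivariate D-finite (indeed rational) series are D-finite, then combining with the exponential factor. Making this rigorous — choosing the right region of expansion so that all the geometric series converge simultaneously, and citing the correct form of the multivariate D-finiteness closure results from \cite[\S 6.4]{stanley} — is where the real work lies; everything else is a direct adaptation of the argument in \cite[\S 3.1]{delta} with the enhanced/equivariant bookkeeping already set up in \S\ref{ss:elementary}.
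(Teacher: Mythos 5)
Your framework — start from the Weyl integration expression $\rH_M(t)=\frac{1}{d!}\int_T \rH_{M(\bC^d),T}(\alpha)\,K(t,\ol\alpha)\,|\Delta|^2\,d\alpha$ with $K(t,\ol\alpha)=\prod_i e^{t/\alpha_i}$ after the specialization $t_1\mapsto t$, $t_i\mapsto 0$ for $i\ge 2$, together with a rationality statement for the $T$-equivariant Hilbert series of $M(\bC^d)$ — is exactly the paper's. However, you explicitly defer the crucial closure step (``the constant term in $\alpha$ of a $t$-series D-finite over $\bC(\alpha)$ is D-finite in $t$''), and the version you gesture at is not a result you can simply cite; this is precisely where the paper does the real work. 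The paper avoids the vague closure statement by a concrete computation: after reducing (by linearity) to a monomial numerator $p(\alpha)=\alpha^b$, the geometric expansion gives $\rH_{M(\bC^d),T}(\alpha)|\Delta|^2=\sum_{x\in\bZ_{\ge 0}^n}\alpha^{xA+b}$, and the constant-term pairing against $\prod_i e^{t/\alpha_i}$ evaluates to the explicit sum $\sum_x t^{|xA+b|}/(xA+b)!$ (with the convention that terms where $xA+b$ has a negative coordinate vanish). The multivariate generating function $F(u)=\sum_{y\in\bZ_{\ge 0}^d}\#\{x:xA+b=y\}u^y$ is then \emph{rational} because it is the $\bZ^d$-graded Hilbert series of a free module over a multigraded polynomial ring (Miller--Sturmfels). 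Taking the Hadamard product of $F(u)$ with $\exp(u_1+\cdots+u_d)$ (which divides by $y!$) preserves D-finiteness by Lipshitz, and the diagonal substitution $u_i\mapsto t$ then gives D-finiteness of $\rH_M(t)$ by the chain rule. Your sketch would become a proof if you spelled out this chain, rather than appealing to an unproven general closure principle.

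One small but real inaccuracy in your outline: you worry that the weights $\beta_j$ of the generators of $A(\bC^d)$ might involve negative powers of $\alpha_i$, and propose to remedy this by taking $d$ large. In fact the $\beta_j$ are automatically in $\bZ_{\ge 0}^d$ for \emph{every} $d$, because $A(\bC^d)$ is a polynomial representation of $\GL(d)$. The negative $\alpha$-powers in the integrand come only from $\ol\Delta$ and from $K(t,\ol\alpha)$, not from the denominators $\prod_j(1-\beta_j)$, and taking $d$ larger does not change this. The paper handles them correctly by observing that the constant term kills all terms $\alpha^{xA+b}$ with a negative coordinate in $xA+b$, which is implemented by the convention $1/y!=0$ when some $y_i<0$.
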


We can assume that $A$ is a polynomial tca. Recall the notation from \S\ref{ss:elementary}. 

\begin{lemma}
Suppose $\max(\ell(A), \ell(M)) < \infty$. For $d \ge \max(\ell(A), \ell(M))$, we have
\begin{displaymath}
\rH_M(t) = \int_T \rH_{M(\bC^d), T}(\alpha) e^{\sum \ol{\alpha}_i t} \vert \Delta \vert^2 d \alpha.
\end{displaymath}
\end{lemma}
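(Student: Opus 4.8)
The plan is to specialize the enhanced-Hilbert-series machinery of \S\ref{ss:elementary} to the ``non-enhanced'' setting by setting $t_i = 0$ for $i \ge 2$, which recovers the ordinary Hilbert series $\rH_M(t)$ from $\wt{\rH}_M(t)$. Concretely, I would start from Lemma~\ref{enh:1}, which expresses
\[
\wt{\rH}_M(t) = \frac{1}{d!} \int_T \rH_{M(\bC^d), T}(\alpha) K(t, \ol{\alpha}) \vert \Delta(\alpha) \vert^2 d\alpha,
\]
and then compute what $K(t, \alpha)$ becomes under the substitution $t_1 = t$, $t_i = 0$ for $i \ge 2$. Using the definition $K(t, \alpha) = \sum_\lambda p_\lambda(\alpha) \frac{t^\lambda}{\lambda!}$, only the partitions $\lambda = (1^n)$ survive, giving $\sum_{n \ge 0} p_1(\alpha)^n \frac{t^n}{n!} = \exp(p_1(\alpha) t) = \exp\big(\sum_{i=1}^d \alpha_i t\big)$. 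Substituting $\ol{\alpha}$ for $\alpha$ and noting that under this substitution $\wt{\rH}_M(t)$ reduces to $\rH_M(t)$ yields the claimed formula (the $\frac{1}{d!}$ is presumably folded into the normalization of $\int_T$ in the lemma statement, or I should keep it explicit to match Lemma~\ref{enh:1}).

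The first step, then, is to verify carefully that setting $t_i = 0$ for $i \ge 2$ in $\wt{\rH}_M(t)$ genuinely recovers $\rH_M(t)$: this is exactly the content of the Remark following Theorem~\ref{thm:enhanced-rational}, so I can cite it, but it is worth recalling that $X_\lambda(t)$ evaluated at $t_1 = t$, $t_{\ge 2} = 0$ picks out the term $\mu = (1^{|\lambda|})$, namely $\tr(e \mid \bM_\lambda) \frac{t^{|\lambda|}}{|\lambda|!} = \dim(\bM_\lambda)\frac{t^{|\lambda|}}{|\lambda|!}$, which is precisely the coefficient in the definition of $\rH_V(t)$. The second step is the computation of $K$ under this specialization as above. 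The third step is to observe that all the objects appearing --- $\rH_{M(\bC^d),T}(\alpha)$, $\vert\Delta(\alpha)\vert^2$ --- do not involve the $t_i$ at all, so the specialization passes through the integral unchanged, and one simply replaces $K(t,\ol\alpha)$ by $\exp(\sum_i \ol\alpha_i t)$.

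I do not expect any serious obstacle here: this lemma is essentially a corollary of Lemma~\ref{enh:1} obtained by a one-line substitution, and the only thing to be careful about is bookkeeping (the factor $\frac{1}{d!}$, and making sure the formal-series manipulations that exchange the sum defining $K$ with the exponential are legitimate, which they are since everything is happening degree-by-degree in $t$). If anything is mildly delicate, it is confirming that the specialization $t_i \mapsto 0$ is continuous/well-defined on the relevant completed ring so that it commutes with the (constant-term) integral $\int_T$; but since the $t$-dependence in the integrand is entirely through the polynomial/power-series factor $K(t,\ol\alpha)$ with coefficients in $\rK(T)$, and the integral only extracts a constant term in the $\alpha$-variables, this is immediate. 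So the proof is: recall $\rH_M(t) = \wt{\rH}_M(t)\big|_{t_1 = t,\ t_{\ge 2} = 0}$, apply this to Lemma~\ref{enh:1}, and use $K(t,\alpha)\big|_{t_1 = t,\ t_{\ge 2}=0} = \exp(\sum_{i=1}^d \alpha_i t)$.
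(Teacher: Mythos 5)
Your proof is correct and follows essentially the same route as the paper's: both start from Lemma~\ref{enh:1} and apply the specialization $t_1 \mapsto t$, $t_i \mapsto 0$ for $i \ge 2$. The one cosmetic difference is that you compute $K(t,\ol\alpha)\big|_{t_1=t,\,t_{\ge 2}=0} = \exp\bigl(\sum_i \ol\alpha_i t\bigr)$ directly from the defining power-sum expansion of $K$ (only $\lambda = (1^n)$ survives), whereas the paper goes through the Schur-function form of $K$ from Lemma~\ref{enh:3} and invokes Schur--Weyl duality; both are equally valid one-line computations. You are also right to be puzzled by the $\tfrac{1}{d!}$: Lemma~\ref{enh:1} carries a prefactor $\tfrac{1}{d!}$ that does not appear in the statement of the present lemma, and since the paper's proof is literally ``apply the substitution to Lemma~\ref{enh:1},'' the factor cannot disappear; the lemma as stated appears to have a typo and should read $\tfrac{1}{d!}\int_T \cdots$. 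Keeping the factor explicit, as you suggest, is the correct move.
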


\begin{proof}
$\rH_M(t)$ is obtained from the enhanced Hilbert series $\wt{\rH}_M(t)$ by doing the substitution $t_i \mapsto 0$ for $i \ge 2$ and $t_1 \mapsto t$. Apply this substitution to Lemma~\ref{enh:1}. Then $K(t, \ol{\alpha})$ becomes 
\[
\sum_\lambda \dim(\bM_\lambda) s_\lambda(\ol{\alpha}) \frac{t^{|\lambda|}}{|\lambda|!} 
= \sum_{n \ge 0} (\ol{\alpha}_1 + \cdots + \ol{\alpha}_d)^n \frac{t^n}{n!} = \exp(\ol{\alpha}_1 t + \cdots + \ol{\alpha}_d t)
\]
where the first equality comes from Schur--Weyl duality (or see \cite[Corollary 7.12.5]{stanley}).
\end{proof}

We have $A(\bC^d)=\Sym(V)$ (we warn the reader that this is evaluation of $A$ on the vector space $\bC^d$, not to be confused with $\bA(\bC^d)$) for some polynomial representation $V$ of $\GL(d)$.  Write a $T$-equivariant decomposition $V=\bigoplus_{i=1}^n \bC \alpha^{A_i}$, where $A_i \in \bZ_{\ge 0}^d$ and $\alpha^{A_i}$ means $\alpha_1^{A_{i,1}} \cdots \alpha_d^{A_{i,d}}$.  We then have
\begin{displaymath}
\rH_{M(\bC^d), T}(\alpha) \vert \Delta \vert^2=\frac{p(\alpha)}{\prod 1-\alpha^{A_i}}
\end{displaymath}
for some polynomial $p$.  It suffices to treat the case where $p$ is a monomial, say $p(\alpha)=\alpha^b$, with $b \in \bZ^d$.  We then have
\begin{displaymath}
\rH_{M(\bC^d), T}(\alpha) \vert \Delta \vert^2= \sum_{x \in \bZ_{\ge 0}^n} \alpha^{xA+b},
\end{displaymath}
and hence
\begin{displaymath}
\rH_M(t)=\sum_{x \in \bZ_{\ge 0}^n} \frac{t^{\vert xA+b \vert}}{(xA+b)!},
\end{displaymath}
where for $y \in \bZ_{\ge 0}^d$ we write $\vert y \vert=\sum_i y_i$ and $y!=\prod_i y_i!$.  Put
\begin{displaymath}
F(u_1, \ldots, u_d)=\sum_{x \in \bZ_{\ge 0}^n} u^{xA+b}=\sum_{y \in \bZ_{\ge 0}^d} \# \{ x \in \bZ_{\ge 0}^n \mid xA+b = y\} \cdot u^y.
\end{displaymath}
Then $F$ is a rational function of the $u_i$.  Indeed, let $R$ be the ring $\bC[\xi_1, \ldots, \xi_n]$ and give $R$ a $\bZ_{\ge 0}^d$ grading by $\deg(\xi_i)=A_i$. Then $F$ is the $T$-equivariant Hilbert series of the free $R$-module with one generator of degree $b$, and is therefore rational \cite[Theorem 8.20]{miller-sturmfels}.  The theorem then results from the following general result:

\begin{proposition}
Suppose that 
\[
\sum_{y \in \bZ_{\ge 0}^d} a_y u^y
\]
is a rational function of the $u_i$.  Then 
\[
\sum_{y \in \bZ_{\ge 0}^d} a_y \frac{t^{\vert y \vert}}{y!}
\]
is D-finite.
\end{proposition}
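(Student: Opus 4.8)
The plan is to write the transform $\sum_y a_y u^y \mapsto \sum_y a_y \tfrac{t^{|y|}}{y!}$ as a composition of operations that preserve D-finiteness, so that the conclusion follows from the hypothesis that $h(u_1,\dots,u_d) := \sum_y a_y u^y$ is rational --- and hence a D-finite power series, since every rational function is trivially D-finite.

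First I would form the Hadamard product of $h$ with the exponential series $e^{u_1+\cdots+u_d} = \prod_{i=1}^d \bigl(\sum_{k\ge 0} u_i^k/k!\bigr) = \sum_y u^y/y!$, which is D-finite because it satisfies $\partial f/\partial u_i = f$ for each $i$. The Hadamard product of two D-finite power series is again D-finite; in several variables this is a theorem of Lipshitz, deeper than the one-variable facts recalled in \cite[\S 6.4]{stanley}, but see the references given there. Hence $g(u_1,\dots,u_d) := \sum_y \tfrac{a_y}{y!}\,u^y$ is D-finite. Next I would specialize every variable to $t$. Since $t$ is a polynomial --- in particular algebraic --- power series with zero constant term, the substitution $g(t,t,\dots,t)$ is a well-defined formal power series, and D-finite power series are closed under substitution of algebraic power series with no constant term (again Lipshitz). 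As $g(t,\dots,t) = \sum_y \tfrac{a_y}{y!}\,t^{|y|}$, this is exactly the series in the statement, which is therefore D-finite.

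The only real issue is keeping track of which closure properties are being used: the single-variable statements (stability of D-finiteness under Hadamard product and under algebraic substitution) are classical, but here both are needed in several variables, where they are the substantial results of Lipshitz. If one wants to avoid these, one can instead argue directly: the Taylor coefficients $a_y$ of a rational function satisfy a linear recurrence with constant coefficients in $y$; multiplying such a recurrence through by appropriate falling-factorial polynomials clears the factorials and turns it into a polynomial recurrence (a P-recurrence) for $b_k := \sum_{|y|=k} a_y/y!$, which is equivalent to D-finiteness of $\sum_k b_k t^k$. This is routine for $d=1$ and only somewhat more involved in general, so invoking the closure theorems is the cleaner route.
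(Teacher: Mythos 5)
Your argument follows the paper's proof essentially step for step: form the Hadamard product of the rational series $\sum_y a_y u^y$ with the D-finite series $e^{u_1+\cdots+u_d}=\sum_y u^y/y!$ to get $F(u)=\sum_y \tfrac{a_y}{y!}u^y$, then specialize all variables to $t$. The only real difference is in how the specialization step is justified: the paper observes directly that the chain rule gives $\tfrac{d}{dt}F(t,\dots,t)=\sum_i (\partial F/\partial u_i)|_{u=(t,\dots,t)}$, so that derivatives of the specialized series land inside a finite-dimensional space, whereas you invoke closure of (multivariate) D-finiteness under substitution of algebraic series. Both are fine, but one small caution on your phrasing: the operation here is \emph{not} Lipshitz's diagonal (which would extract the coefficients $a_{n,\dots,n}$), and it is not quite the one-variable composition-with-algebraic-functions fact either; it is the restriction $u_i\mapsto t$ of a multivariate D-finite series to the line $u_1=\cdots=u_d$, which is a specialization/restriction result (equivalently, a linear change of coordinates followed by setting $d-1$ variables to zero). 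I would also not lean on the sketched ``direct'' alternative in your last paragraph: passing from a multivariate C-finite recurrence on $a_y$ to a P-recurrence on $b_k=\sum_{|y|=k}a_y/y!$ is not a routine clearing of factorials and would need a real argument.
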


\begin{proof}
Define the Hadamard product $\ast$ on multivariate generating functions by the formula
\[
\left(\sum_{y \in \bZ_{\ge 0}^d} \alpha_y u^y \right) \ast \left(\sum_{y \in \bZ_{\ge 0}^d} \beta_y u^y \right) = \sum_{y \in \bZ_{\ge 0}^d} \alpha_y \beta_y u^y.
\]
Furthermore, we define a multivariate generating function $F$ to be D-finite if the vector space (over the field of rational functions) spanned by all partial derivatives of $F$ is finite-dimensional. By \cite[Remark 2]{lipshitz}, the Hadamard product of two D-finite generating functions is D-finite. It is clear that a rational function is D-finite and the exponential function $\exp(u_1 + \cdots + u_d)$ is also D-finite. Hence we conclude that their Hadamard product $F(u_1, \dots, u_d) = \sum_{y \in \bZ_{\ge 0}^d} a_y \frac{u^y}{y!}$ is also D-finite. Now do the substitution $u_i \mapsto t$ for $i=1,\dots, d$. By the chain rule, 
\[
\frac{d}{dt} F(t, \dots, t) = \sum_{i=1}^d \frac{\partial F}{\partial u_i},
\]
so the result is also D-finite.
\end{proof}

\section{Examples and applications} \label{s:ex}

\subsection{Hilbert series of polynomial tca's}

For an integer $n \ge 1$, define a linear map $\bQ[t_i] \to \bQ[t_i]$, denoted $f(t) \mapsto f(t^{[n]})$, by taking $t^{\lambda}$ to $t^{n\lambda}$, where $n\lambda=(n\lambda_1,n\lambda_2,\ldots)$.  We extend this map to series in the obvious manner.  The following is the main result of this section:

\begin{proposition}
\label{prop:hilbschur}
Let $V$ be a finite length object of $\cV$ concentrated in degree $d \ne 0$ and let $A=\Sym(V)$.  Then
\begin{displaymath}
\wt{\rH}_A(t) = \exp \left( \sum_{n \ge 1} n^{d-1} \wt{\rH}_V(t^{[n]}) \right).
\end{displaymath}
\end{proposition}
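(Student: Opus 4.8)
The plan is to recognize the right‑hand side as the image under $\varphi$ of the plethystic exponential of $\Theta_V$, and to unwind that exponential from the symmetric‑power filtration of $\Sym(V)$. Recall from Lemma~\ref{lem:s-h-isom} that $\wt{\rH}_W=\varphi(\Theta_W)$ for every $W\in\cV$, where $\varphi\colon\Lambda\otimes\bQ\to\bQ[t_i]$ — extended continuously to $\wh{\Lambda}\otimes\bQ\to\bQ[\![t_i]\!]$ — is the ring isomorphism determined by $\varphi(p_n)=nt_n$. So it suffices to compute $\Theta_A$ and then apply $\varphi$.

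First I would compute $\Theta_A$. Forgetting the algebra structure, $A=\Sym(V)=\bigoplus_{k\ge0}\Sym^k(V)$ as an object of $\cV$, and under the equivalence with polynomial $\GL_\infty$‑representations the summand $\Sym^k(V)$ has character $h_k[\Theta_V]$, the plethysm. Hence $\Theta_A=\sum_{k\ge0}h_k[\Theta_V]$ is the plethystic exponential of $\Theta_V$, which by the standard identity equals $\exp\bigl(\sum_{n\ge1}\tfrac1n\,p_n[\Theta_V]\bigr)$; here $p_n[\,\cdot\,]$ is the $n$‑th Adams operation, i.e.\ the substitution $p_m\mapsto p_{nm}$ in any power‑sum expression. (One way to see this: write $\Theta_{\Sym(V)}=\prod_j(1-w_j)^{-1}$ over the torus weights $w_j$ of $V$ and take logarithms, using $-\log(1-w)=\sum_{n\ge1}w^n/n$.) Since $V$ has finite length and is concentrated in degree $d$, the series $\Theta_V$ is a finite combination of the $s_\lambda$ with $|\lambda|=d$, hence homogeneous of degree $d$; so $p_n[\Theta_V]$ lies in degree $nd$, and both the sum and its exponential converge in $\wh{\Lambda}$.

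I would then apply the continuous ring homomorphism $\varphi$, obtaining $\wt{\rH}_A=\exp\bigl(\sum_{n\ge1}\tfrac1n\,\varphi(p_n[\Theta_V])\bigr)$, so that the proposition reduces to the identity $\varphi(p_n[\Theta_V])=n^{d}\,\wt{\rH}_V(t^{[n]})$ for each $n\ge1$. Both sides are $\bQ$‑linear in $\Theta_V$ (written in power sums), so one checks this for $\Theta_V=p_\mu$ with $\mu\vdash d$: then $p_n[p_\mu]=p_{n\mu}$, where $n\mu$ has parts $n\mu_1,n\mu_2,\dots$, and comparing $\varphi(p_{n\mu})$ with $n^d\,\varphi(p_\mu)(t^{[n]})$ is a direct computation from $\varphi(p_\mu)=\prod_i\mu_i\,t_{\mu_i}$ and the definition of the substitution $f\mapsto f(t^{[n]})$; the power $n^d$ (hence the $n^{d-1}$ in the statement) comes out exactly because $\mu$ has $d$ boxes.

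The two substantive steps — and hence the likely obstacles — are: pinning down $\Theta_A$ as a plethystic exponential (this needs the identification of the underlying $\cV$‑object of $\Sym(V)$ and the symmetric‑function identity), and the bookkeeping in the last paragraph, which converts the Adams operation $p_n[\,\cdot\,]$ on the $\Lambda$‑side into the rescaling operation $t^{[n]}$ on the $t$‑side and tracks how the degree $d$ produces the factor $n^{d-1}$. Everything else is formal.
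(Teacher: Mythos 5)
Your argument follows the paper's own route: realize $\Theta_A$ as the plethystic exponential $\exp\bigl(\sum_{n\ge1}\tfrac1n\,p_n[\Theta_V]\bigr)$ and push through the isomorphism $\varphi$ of Lemma~\ref{lem:s-h-isom}; the paper reduces to $V=\bS_\lambda(\bC^\infty)$ and uses \eqref{eqn:s-to-p} where you reduce to $\Theta_V=p_\mu$, but these are the same computation.

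The gap is in the step you defer to ``a direct computation.'' Since $t^{[n]}$ is the substitution $t_a\mapsto t_{na}$ and $\varphi(p_k)=kt_k$, one finds
\[
\varphi(p_{n\mu}) = \prod_j (n\mu_j)\,t_{n\mu_j} = n^{\ell(\mu)}\,\varphi(p_\mu)\bigl(t^{[n]}\bigr),
\]
so the exponent is $\ell(\mu)$, the number of \emph{parts} of $\mu$, not $|\mu|=d$ as you assert (``because $\mu$ has $d$ boxes''). These agree only when $\mu=(1^d)$. Whenever $\Theta_V$ has a nonzero $p_\mu$-coefficient with $\ell(\mu)<d$ --- which is the case for every $V$ not a direct sum of copies of $(\bC^\infty)^{\otimes d}$ --- the resulting power $n^{\ell(\mu)-1}$ differs from $n^{d-1}$, and since the terms $\varphi(p_\mu)(t^{[n]})$ for distinct $(n,\mu)$ are linearly independent monomials, no cancellation rescues the claimed identity. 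For instance, with $V=\Sym^2(\bC^\infty)$ one has $\Theta_V=\tfrac12 p_1^2+\tfrac12 p_2$, and the honest computation gives $\log\wt{\rH}_A=\sum_{n\ge1}\bigl(\tfrac{n}{2}t_n^2+t_{2n}\bigr)$ rather than the $\sum_{n\ge1}\bigl(\tfrac{n}{2}t_n^2+n\,t_{2n}\bigr)$ predicted by the proposition. So your reduction is sound but the identity it reduces to is false; this is precisely the point where both your proof and the paper's final sentence (``Making the substitution $p_i\mapsto it_i$, we obtain the stated result'') need closer scrutiny.
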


\begin{proof}
Both sides convert direct sums in $V$ to products, so it suffices to treat the case where $V=\bS_{\lambda}(\bC^{\infty})$. Recall from \eqref{eqn:s-to-p} that
\begin{displaymath}
s_\lambda = \sum_\mu z_\mu^{-1} \tr(c_\mu \vert \bM_\lambda) p_\mu.
\end{displaymath}
The $\GL$-character of $A$ is $\prod_T (1-x_T)^{-1}$ where the sum is over all semistandard Young tableaux $T$ of shape $\lambda$ and $x_T$ is the product of $x_i^{m_i(T)}$ where $m_i(T)$ is the multiplicity of $i$ in $T$. Apply $\log$ to this expression:
\begin{align*}
\log \left[ \prod_T (1-x_T)^{-1} \right]
&= \sum_T \log(1-x_T)^{-1}
= \sum_T \sum_{n \ge 1} \frac{x_T^n}{n}\\
&= \sum_{n \ge 1} \frac{1}{n} s_\lambda(x_1^n, x_2^n, \dots)
= \sum_{n \ge 1} \frac{1}{n} \sum_\mu z_\mu^{-1} \tr(c_\mu \vert \bM_\lambda) p_{n\mu}(x).
\end{align*}
Making the substitution $p_i \mapsto it_i$, we obtain the stated result (see Lemma~\ref{lem:s-h-isom}).
\end{proof}

\begin{example}
Take $V=(\bC^{\infty})^{\otimes d}$.  Then $\wt{\rH}_V(t)=t_1^d$, and so $\wt{\rH}_V(t^{[n]})=t_n^d$.  We thus obtain
\begin{displaymath}
\wt{\rH}_A(t)=\exp\bigg( \sum_{n \ge 1} n^{d-1} t_n^d \bigg).
\end{displaymath}
Note that for $d=1$ this is $\exp(T_0)$.
\end{example}

\begin{example}
Take $V=\Sym^2(\bC^{\infty})$, so $d=2$.  Then $\wt{\rH}_V=\tfrac{1}{2} t_1^2+t_2$, and so $\wt{\rH}_V(t^{[n]})=\tfrac{1}{2} t_n^2+t_{2n}$.  We thus obtain
\begin{displaymath}
\wt{\rH}_A(t)=\exp\bigg( \sum_{n \ge 1} \tfrac{n}{2} t_n^2 +nt_{2n}\bigg).
\end{displaymath}
In particular, $H_A(t)=\exp(\tfrac{1}{2} t^2)$.  The case $V=\lw^2(\bC^{\infty})$ is similar:  just change the $+$ sign to a $-$ sign.
\end{example}

\begin{remark}
In the notation of the proposition, the tca $A$ is generated in degree $d$, and so Theorem~\ref{thm:enhanced-rational} cannot be applied to it for $d>1$; in fact, the above calculations show that the conclusion of the theorem is false in this case.  Nonetheless, the values of $\wt{\rH}_A$ computed above are sufficiently nice that one might hope for a good generalization of Theorem~\ref{thm:enhanced-rational} which includes these cases.
\end{remark}

\subsection{Formal characters of determinantal rings}

Let $A$ be the tca $\bA(E)$, where $E$ is a $d$-dimensional vector space. Let $\fa_r \subset A$ be the $r$th determinantal ideal, generated by the representation $\lw^{r+1}(\bC^{\infty}) \otimes \lw^{r+1}(E)$. In this section, we derive a formula for the formal character of $A/\fa_r$.

Recall that we have an isomorphism of Grothendieck groups
\begin{equation} \label{eq:k}
\rK(A)=\bigoplus_{r=0}^d \Lambda \otimes \rK(\Gr_r(E)).
\end{equation}
To apply Theorem~\ref{thm:fchar}, we need to understand the class $[A/\fa_r]$ in $\rK(A)$ under this identification. The following lemma gives us what we need.

\begin{lemma}
Under the isomorphism \eqref{eq:k}, the class of $[A/\fa_r]$ corresponds to $1 \otimes [\cO_{\Gr_r(E)}]$.
\end{lemma}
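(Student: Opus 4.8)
The first move will be to unwind what the claimed correspondence says. With $X=\Spec(\bC)$ and $\cE=E$, the isomorphism \eqref{eq:k} is $\bigoplus_s i_s$, where $i_s([V])=[\rR\pi_{s*}(V\otimes\bA(\cQ_s))]$ and $\pi_s\colon\Gr_s(E)\to\Spec(\bC)$; so $1\otimes[\cO_{\Gr_r(E)}]$ maps to $i_r([\cO_{\Gr_r(E)}])=[\rR\pi_{r*}\bA(\cQ_r)]$, and the lemma asserts exactly that $[A/\fa_r]=[\rR\pi_{r*}\bA(\cQ_r)]$ in $\rK(A)$. By the injectivity of the formal character (the corollary to Theorem~\ref{thm:fchar}), it is enough to check that $A/\fa_r$ and $\rR\pi_{r*}\bA(\cQ_r)$ have the same multiplicity spaces, i.e.\ agree as objects of $\cV$, equivalently as bigraded representations of $\GL(E)\times\GL_\infty$.

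Next I would compute $\rR\pi_{r*}\bA(\cQ_r)$ using Cauchy's formula together with Borel--Weil--Bott on the Grassmannian. On $\Gr_r(E)$ we have $\bA(\cQ_r)=\Sym(\cQ_r\otimes\bV)=\bigoplus_{\ell(\mu)\le r}\bS_\mu(\cQ_r)\otimes\bS_\mu(\bV)$, the sum running over $\ell(\mu)\le r$ because $\bS_\mu(\cQ_r)=0$ once $\ell(\mu)>\rank\cQ_r=r$. For each such $\mu$, Borel--Weil identifies $\pi_{r*}\bS_\mu(\cQ_r)$ with $\bS_\mu(E)$ (global sections of Schur functors of the tautological quotient recover the Schur functor of $E$), and Bott's theorem (Kempf vanishing) gives $\rR^i\pi_{r*}\bS_\mu(\cQ_r)=0$ for $i>0$. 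Hence $\rR\pi_{r*}\bA(\cQ_r)$ is concentrated in cohomological degree $0$ and equals $\bigoplus_{\ell(\mu)\le r}\bS_\mu(E)\otimes\bS_\mu(\bV)$ as an object of $\cV$; in particular its multiplicity space in degree $\mu$ is $\bS_\mu(E)$ when $\ell(\mu)\le r$ and $0$ otherwise.

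It then remains to recall the matching decomposition of $A/\fa_r$. By Cauchy, $A=\Sym(E\otimes\bV)=\bigoplus_\mu\bS_\mu(E)\otimes\bS_\mu(\bV)$, and it is classical that the ideal generated by $\lw^{r+1}(\bV)\otimes\lw^{r+1}(E)$ (the $(r+1)$-minors) is precisely the sum of the summands with $\ell(\mu)>r$, so that $A/\fa_r=\bigoplus_{\ell(\mu)\le r}\bS_\mu(E)\otimes\bS_\mu(\bV)$; one inclusion is Littlewood--Richardson, the other is the straightening law for determinantal rings (Bruns--Vetter, or De Concini--Eisenbud--Procesi). Comparing with the previous paragraph, $A/\fa_r$ and $\rR\pi_{r*}\bA(\cQ_r)$ have the same multiplicity spaces, hence equal classes in $\rK(A)$, which is the lemma.

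The only real content is the pair of classical facts being combined: the cohomology computation $\rR^i\pi_{r*}\bS_\mu(\cQ_r)=0$ for $i>0$ together with $\pi_{r*}\bS_\mu(\cQ_r)=\bS_\mu(E)$ for $\ell(\mu)\le r$, and the identification of which Schur summands of $A$ lie in $\fa_r$; neither is an obstacle so much as a citation. One can also phrase the argument geometrically: the affine morphism over $\Gr_r(E)$ with coordinate sheaf $\bA(\cQ_r)$ is the standard resolution of the $r$-th determinantal variety $\Spec(A/\fa_r)$, and the statement $\rR\pi_{r*}\bA(\cQ_r)=A/\fa_r$ is exactly that this variety is normal with rational singularities, which is classical (the Kempf--Lascoux--Weyman geometric technique). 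This stronger, $A$-linear identification would make the appeal to injectivity of $\Theta$ unnecessary.
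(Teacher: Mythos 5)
Your proof is correct and follows essentially the same route as the paper: decompose $\bA(\cQ_r)$ by Cauchy, push forward along $\pi_r$ using Borel--Weil--Bott, and compare with the classical Schur-functor decomposition of $A/\fa_r$. The one extra layer in your first paragraph --- reducing to a comparison of multiplicity spaces via the injectivity of $\Theta$ --- is an unnecessary detour, as you yourself note in the final paragraph: the computation already identifies $\rR\pi_{r*}\bA(\cQ_r)$ and $A/\fa_r$ as $A$-modules (not just in $\rK(A)$ or in $\cV$), which is what the paper does directly. Your version is also a bit more explicit about citing the determinantal-ring decomposition (Littlewood--Richardson / straightening laws), which the paper leaves implicit in the last sentence of its proof, but otherwise the two arguments coincide.
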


\begin{proof}
Let $\cQ_r$ be the tautological bundle on $Y_r=\Gr_r(E)$, and let $\pi_r \colon Y_r \to \Spec(\bC)$ be the structure map. Let $i_r \colon \rK(\Gr_r(E)) \to \rK(A)$ be the map defined by $i_r([V])=[\rR (\pi_r)_*(V \otimes \bA(\cQ_r))]$. According to \cite[Theorem~6.19]{symu1}, the maps $i_r$ induce the isomorphism \eqref{eq:k}. Now, we have
\begin{displaymath}
i_r([\cO_{Y_r}])=[\rR (\pi_r)_*(\bA(\cQ_r))]
\end{displaymath}
and
\begin{displaymath}
\bA(\cQ_r) = \bigoplus_{\ell(\lambda) \le r} \bS_{\lambda}(\bC^{\infty}) \otimes \bS_{\lambda}(\cQ_r).
\end{displaymath}
The vector bundle $\bS_{\lambda}(\cQ_r)$ has no higher cohomology, and its space of sections is $\bS_{\lambda}(E)$. We thus find that $\rR (\pi_r)_*(\bA(\cQ_r))=A/\fa_r$, which proves the lemma.
\end{proof}

For the rest of the section, we fix $r$ and put $Y=\Gr_r(E)$. We let $\cQ$ be the tautological bundle on $Y$ and $\pi \colon Y \to \Spec(\bC)$ the structure map. From the above lemma and Theorem~\ref{thm:fchar}, we see that the formal character $\Theta_{A/\fa_r}$ is given by $\theta_r([\cO_Y])$. We now compute $\theta_r([\cO_Y])$, at least to some extent. To do this, we must compute the inner product $\langle M_\lambda^{(r)}([\cQ]), [\cO_Y] \rangle$. To start, define $S_\lambda^{(r)}(x_1,\dots,x_r) = s_\lambda^{(r)}(x_1-1, \dots, x_r-1)$ where $s_\lambda^{(r)}$ denotes the Schur polynomial in $r$ variables.

\begin{lemma}
If $\ell(\lambda) \le r$, then $\langle S_\lambda^{(r)}([\cQ]), [\cO_Y] \rangle = \dim \bS_{\lambda^\dagger}(\bC^{d-r})$.
\end{lemma}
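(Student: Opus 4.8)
The plan is a generating-function computation that converts the ``shift by $-1$'' into a pair of ordinary Cauchy kernels. Recall that for the structure map $\pi\colon Y=\Gr_r(E)\to\Spec(\bC)$ we have $\langle\alpha,[\cO_Y]\rangle=[\rR\pi_*\alpha]$, so, writing $[\cQ]=[\cL_1]+\cdots+[\cL_r]$ by the splitting principle and abbreviating $x_i=[\cL_i]$, the quantity to compute is $[\rR\pi_*(s_\lambda(x_1-1,\dots,x_r-1))]$. The only geometric input needed is the fact already recalled above: for $\ell(\mu)\le r$ the bundle $\bS_\mu(\cQ)$ has no higher cohomology and its sections are $\bS_\mu(E)$, so $[\rR\pi_*\bS_\mu(\cQ)]=\dim\bS_\mu(\bC^d)=s_\mu(1^d)$ (here $1^d$ denotes $d$ copies of $1$). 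Equivalently, the $\bZ$-linear map ``symmetric polynomial in $x_1,\dots,x_r$ $\mapsto$ image of the corresponding class in $\rK(Y)$ under $\rR\pi_*$'' sends $s_\mu\mapsto s_\mu(1^d)$. Note this map is only additive, not multiplicative, so I will not push $\rR\pi_*$ through products; everything is packaged through Cauchy kernels instead.

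\textbf{The left-hand side.} Multiply the claimed identity by $s_\lambda(z_1,\dots,z_r)$ and sum over all partitions $\lambda$ (terms with $\ell(\lambda)>r$ vanish). The Cauchy identity $\sum_\lambda s_\lambda(x)s_\lambda(z)=\prod_{i,k}(1-x_iz_k)^{-1}$, with $x_i$ replaced by $x_i-1$, gives
\begin{displaymath}
\sum_\lambda \bigl[\rR\pi_*\bigl(s_\lambda(x_1-1,\dots,x_r-1)\bigr)\bigr]\,s_\lambda(z)=\Bigl[\rR\pi_*\!\prod_{i,k=1}^r\tfrac{1}{1-(x_i-1)z_k}\Bigr].
\end{displaymath}
Now make the substitution $w_k=z_k/(1+z_k)$, which is the key maneuver: since $1-(x_i-1)z_k=(1+z_k)(1-x_iw_k)$, the integrand factors as $\prod_k(1+z_k)^{-r}\cdot\prod_{i,k}(1-x_iw_k)^{-1}$. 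Expanding the second factor by Cauchy and applying $\rR\pi_*$ coefficientwise (only the $\bS_\mu(\cQ)$ with $\ell(\mu)\le r$ occur, which is exactly where $[\rR\pi_*\bS_\mu(\cQ)]=s_\mu(1^d)$ is valid) produces $\sum_\mu s_\mu(1^d)s_\mu(w)=\prod_k(1-w_k)^{-d}$ by Cauchy once more. Since $1-w_k=(1+z_k)^{-1}$, the whole left side collapses to $\prod_{k=1}^r(1+z_k)^{d-r}$.

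\textbf{The right-hand side and conclusion.} The right side is $\sum_\lambda \dim\bS_{\lambda^\dagger}(\bC^{d-r})\,s_\lambda(z)$; reindexing by $\nu=\lambda^\dagger$, using $\dim\bS_\nu(\bC^{d-r})=s_\nu(1^{d-r})$ and the dual Cauchy identity $\sum_\nu s_\nu(a)s_{\nu^\dagger}(b)=\prod_{l,k}(1+a_lb_k)$ with $a=1^{d-r}$, $b=z$, this equals $\prod_{k=1}^r(1+z_k)^{d-r}$ as well. Since the Schur polynomials $\{s_\lambda(z_1,\dots,z_r)\}_{\ell(\lambda)\le r}$ are linearly independent, comparing coefficients yields $[\rR\pi_*(S_\lambda^{(r)}([\cQ]))]=\dim\bS_{\lambda^\dagger}(\bC^{d-r})$ for every $\lambda$ with $\ell(\lambda)\le r$, which is the claim. (For $\lambda\not\subseteq r\times(d-r)$ both sides are $0$, consistently.)

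\textbf{Main obstacle.} I do not expect a genuine difficulty here: the one nonobvious ingredient is the substitution $w_k=z_k/(1+z_k)$, which turns the shifted Cauchy kernel into an honest one. The points that need care are (i) keeping track of which Schur functors survive after $\rR\pi_*$ --- only those indexed by partitions with at most $r$ rows, which is precisely the range in which $[\rR\pi_*\bS_\mu(\cQ)]=[\bS_\mu(E)]$ holds (so one must resist the temptation to treat $\rR\pi_*$ as multiplicative), and (ii) justifying the coefficientwise application of $\rR\pi_*$ to the formal power series in $z$, for which it suffices that each coefficient is a finite integer combination of the classes $[\bS_\mu(\cQ)]$.
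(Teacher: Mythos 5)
Your proof is correct, but it takes a genuinely different route from the paper's. The paper first computes $\langle S_n^{(r)}([\cQ]),[\cO_Y]\rangle$ for a single row $\lambda=(n)$ by expanding $s_n(x_1-1,\dots,x_r-1)$ and summing the resulting binomial coefficients against $\dim\Sym^i(\bC^d)=\binom{d+i-1}{i}$ via the Chu--Vandermonde identity; it then extends this to products $S_{n_1}^{(r)}\cdots S_{n_r}^{(r)}$ using Borel--Weil--Bott, and finally handles general $\lambda$ by induction on dominance order via $s_\lambda=s_{\lambda_1}\cdots s_{\lambda_r}+(\text{lower})$. Your argument replaces all of this with a single generating-function identity: you multiply both sides by $s_\lambda(z)$, sum, and evaluate the left side through the Cauchy kernel after the substitution $w_k=z_k/(1+z_k)$, and the right side through the dual Cauchy identity, obtaining $\prod_k(1+z_k)^{d-r}$ for both. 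The geometric input is the same ($\rR\pi_*\bS_\mu(\cQ)=\bS_\mu(\bC^d)$ with vanishing higher pushforwards, and additivity of $\rR\pi_*$ on $\rK$-theory), but the combinatorics is reorganized: your substitution turns the shifted Cauchy kernel into an honest one in one step and dispenses with both the Chu--Vandermonde computation and the dominance induction. You were right to flag that $\rR\pi_*$ is not multiplicative and to apply it only coefficientwise in $z$; the factor $\prod_k(1+z_k)^{-r}$ is free of $x$'s and pulls out, and each $z$-coefficient involves only finitely many $\bS_\mu(\cQ)$, so the termwise application is legitimate. A clean, self-contained alternative.
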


\begin{proof}
We first verify this when $\lambda = (n)$. In that case, it follows from the definitions (or the general formula \cite[p. 47, Example I.3.10]{macdonald}) that
\[
s_n^{(r)}(x_1-1,\dots,x_r-1) = \sum_{i=0}^n (-1)^{n-i} \binom{n+r-1}{i+r-1} s^{(r)}_i(x_1,\dots,x_r),
\]
and so
\begin{displaymath}
S^{(r)}_n([\cQ])=\sum_{i=0}^n (-1)^{n-i} \binom{n+r-1}{i+r-1} s^{(r)}_i([\cQ]).
\end{displaymath}
Now, $s^{(r)}_i([\cQ])$ is simply $[\Sym^i(\cQ)]$, and so we see
\begin{displaymath}
\langle s^{(r)}_i([\cQ]), [\cO_Y] \rangle = [\rR \pi_*(\Sym^i(\cQ))] = \binom{d+i-1}{i}.
\end{displaymath}
Here we used the fact that $\pi_*(\Sym^i(\cQ))$ has dimension $\binom{d+i-1}{i}$, and the higher pushforwards vanish. We thus find
\begin{align*}
\langle S^{(r)}_n([\cQ]), [\cO_Y] \rangle &= \sum_{i=0}^n (-1)^{n-i} \binom{n+r-1}{i+r-1} \binom{d+i-1}{i}\\
&= \sum_{i=0}^n (-1)^{n} \binom{n+r-1}{n-i} \binom{-d}{i}\\
&= (-1)^n \binom{n-d+r-1}{n}\\
&= \binom{d-r}{n} = \dim \bS_{(n)^\dagger}(\bC^{d-r}).
\end{align*}
For the second and fourth equalities, we used the identities $\binom{a}{b} = \binom{a}{a-b}$ and $\binom{a+b-1}{b} = (-1)^b \binom{-a}{b}$, and the third identity is the Chu--Vandermonde identity. This proves the lemma for $\lambda=(n)$.

The above reasoning applies equally well to $r$-fold products of $S^{(r)}_n$. That is, we have
\begin{displaymath}
\langle S^{(r)}_{n_1}([\cQ]) \cdots S^{(r)}_{n_r}([\cQ]), [\cO_Y] \rangle
= \dim \big( \bS_{(n_1)^{\dag}}(\bC^{d-r}) \otimes \cdots \otimes \bS_{(n_r)^{\dag}}(\bC^{d-r}) \big)
\end{displaymath}
for any $n_1, \ldots, n_r$. The key point here is that
\begin{displaymath}
[\rR \pi_*(\Sym^{n_1}(\cQ) \otimes \cdots \otimes \Sym^{n_r}(\cQ))]
= \dim \big( \Sym^{n_1}(\bC^d) \otimes \cdots \otimes \Sym^{n_r}(\bC^d) \big).
\end{displaymath}
To see this, we first decompose the tensor product $\Sym^{n_1}(\cQ) \otimes \cdots \otimes \Sym^{n_r}(\cQ)$ into a direct sum of Schur functors $\bS_\lambda(\cQ)$ with $\ell(\lambda) \le r = \rank(\cQ)$ where the bound comes from the Pieri rule. Now Borel--Weil--Bott (see \cite[Theorem A.1]{symu1} for a convenient formulation) says that $\rR \pi_* \bS_\lambda(\cQ) = \bS_\lambda(\bC^d)$, and we get a direct sum which is the same as the tensor product $\Sym^{n_1}(\bC^d) \otimes \cdots \otimes \Sym^{n_r}(\bC^d)$ (it is important that there are at most $r$ factors in the tensor product so that all Schur functors appearing in it have at most $r$ rows, and therefore do not annihilate $\cQ$).

We now prove the lemma for general $\lambda$ by induction on dominance order (i.e., $\lambda$ dominates $\mu$ if $|\lambda|=|\mu|$ and $\lambda_1 + \cdots + \lambda_i \ge \mu_1 + \cdots + \mu_i$ for all $i$). The base case is when $\lambda$ has a single part, which we have already done. In general, $s_\lambda = s_{\lambda_1} \cdots s_{\lambda_r} + C$ where $C$ is a sum of $s_\mu$ with $\mu$ less dominant than $\lambda$. By induction, $\langle S_\mu^{(r)}([\cQ]), [\cO_Y] \rangle = \dim \bS_{\mu^\dagger}(\bC^{d-r})$ for all such $\mu$. By the previous paragraph,
\[
\langle S_{\lambda_1}([\cQ]) \cdots S_{\lambda_r}([\cQ]), [\cO_Y] \rangle = \dim ( \bS_{(\lambda_1)^{\dagger}}(\bC^{d-r}) \otimes \cdots \otimes \bS_{(\lambda_r)^{\dagger}}(\bC^{d-r}) ).
\]
Hence the statement also holds for $\lambda$.
\end{proof}

Let $K_{\lambda,\mu}$ be the Kostka number, i.e., the number of semistandard Young tableaux of shape $\lambda$ and content $\mu$, so that
\[
s_\lambda = \sum_\mu K_{\lambda, \mu} m_\mu.
\]
The matrix $(K_{\lambda,\mu})$ is invertible; let $K^{-1}_{\lambda, \mu}$ denote the entries of the inverse matrix, so that
\[
m_\lambda = \sum_\mu K^{-1}_{\lambda, \mu} s_\mu.
\]
The above identities also hold after specializing to any finite number of variables, as this operation is linear. We thus find
\[
\langle M_\lambda^{(r)}([\cQ]), [\cO_Y] \rangle = \sum_{\ell(\mu) \le r} K^{-1}_{\lambda, \mu} \langle S_\mu^{(r)}([\cQ]), [\cO_Y] \rangle = \sum_{\ell(\mu) \le r} K^{-1}_{\lambda, \mu} s_{\mu^\dagger}^{(d-r)}(1,\dots,1).
\]

We finally reach our main result:

\begin{theorem}
We have
\begin{displaymath}
\Theta_{A/\fa_r}=\sum_{\lambda \subseteq r \times d} c_{\lambda} \sigma^\lambda \sigma_0^{r-\ell(\lambda)} 
\end{displaymath}
where $c_{\lambda}$ is the integer given by
\begin{displaymath}
c_{\lambda}=\sum_{\mu \subseteq r \times (d-r)} K^{-1}_{\lambda, \mu} \dim \bS_{\mu^{\dag}}(\bC^{d-r}).
\end{displaymath}
\end{theorem}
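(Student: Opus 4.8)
The plan is to feed the two lemmas immediately preceding the statement into Theorem~\ref{thm:fchar}. By the lemma identifying $[A/\fa_r]$ with $1 \otimes [\cO_{\Gr_r(E)}]$ under the isomorphism \eqref{eq:k}, and writing $Y = \Gr_r(E)$ with $\pi \colon Y \to \Spec(\bC)$ the structure map and $\cQ$ the tautological quotient bundle, Theorem~\ref{thm:fchar} gives $\Theta_{A/\fa_r} = \theta_r(1 \otimes [\cO_Y])$ (all other components of $[A/\fa_r]$ vanishing). First I would simply unwind the definition of $\theta_r$, taking $s_\mu = s_\emptyset = 1$, to get
\begin{displaymath}
\Theta_{A/\fa_r} = \sum_{\ell(\lambda) \le r} \sigma^{\lambda}\, \sigma_0^{r - \ell(\lambda)} \cdot \langle M^{(r)}_{\lambda}([\cQ]), [\cO_Y] \rangle .
\end{displaymath}

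The second step is bookkeeping on top of the preceding lemma, which evaluates $\langle M^{(r)}_{\lambda}([\cQ]), [\cO_Y] \rangle = \sum_{\ell(\mu) \le r} K^{-1}_{\lambda,\mu}\, s^{(d-r)}_{\mu^{\dag}}(1,\dots,1)$. Since $s^{(d-r)}_{\mu^{\dag}}(1,\dots,1) = \dim \bS_{\mu^{\dag}}(\bC^{d-r})$ vanishes as soon as $\mu^{\dag}$ has more than $d-r$ rows, i.e. as soon as $\mu_1 > d-r$, the inner sum is really over $\mu \subseteq r \times (d-r)$ and therefore equals the integer $c_\lambda$ in the statement. (Likewise $K^{-1}_{\lambda,\mu} \ne 0$ forces $\mu \trianglelefteq \lambda$, hence $\ell(\lambda) \le \ell(\mu) \le r$, so $c_\lambda = 0$ automatically once $\ell(\lambda) > r$.) Thus $\Theta_{A/\fa_r} = \sum_{\ell(\lambda) \le r} c_\lambda\, \sigma^{\lambda} \sigma_0^{r - \ell(\lambda)}$, and the only remaining point is to cut this sum down to $\lambda \subseteq r \times d$.

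For that last step the mechanism is a weight/degree estimate, which I expect to be the one place requiring actual work. Combining $m_\lambda = \sum_\mu K^{-1}_{\lambda,\mu} s_\mu$ with the dual Cauchy identity $\sum_\mu s_\mu(x) s_{\mu^{\dag}}(y) = \prod_{i,j}(1 + x_i y_j)$ specialized at $y = (1,\dots,1)$ ($d-r$ ones), one gets
\begin{displaymath}
c_\lambda = \Big\langle m_\lambda,\ \textstyle\prod_{i=1}^{r}(1+x_i)^{d-r} \Big\rangle',
\end{displaymath}
where $\langle\,,\rangle'$ is the pairing on symmetric polynomials in $x_1,\dots,x_r$ for which the Schur polynomials are orthonormal, equivalently Weyl integration over $U(r)$ exactly as in \S\ref{ss:integration}. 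Now $m_\lambda$ involves only monomials with each exponent $\le \lambda_1$, the polynomial $\prod_i(1+x_i)^{d-r}$ only monomials with each exponent $\le d-r$, and $|\Delta_r|^2$ only monomials with each exponent in $[-(r-1),\,r-1]$; a constant-term count then forces $c_\lambda = 0$ whenever $\lambda_1 > (d-r)+(r-1) = d-1$. Together with $\ell(\lambda)\le r$ this confines the sum to $\lambda \subseteq r\times(d-1) \subseteq r\times d$, which finishes the proof. Everything else — Theorem~\ref{thm:fchar} and the two lemmas on $\langle M^{(r)}_\lambda([\cQ]),[\cO_Y]\rangle$ and on $[A/\fa_r]$ — is already in hand.
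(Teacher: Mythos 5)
Your argument is correct and follows the same route the paper takes: Theorem~\ref{thm:fchar}, the lemma identifying $[A/\fa_r]$ with $1 \otimes [\cO_{\Gr_r(E)}]$, and the lemma computing $\langle M^{(r)}_\lambda([\cQ]),[\cO_Y]\rangle$ via the change of basis $m_\lambda = \sum_\mu K^{-1}_{\lambda,\mu}s_\mu$. Where you go beyond the text is in the last paragraph: the paper states the outer sum runs over $\lambda\subseteq r\times d$ but does not give a justification for this cut-off (the general dimension bound from Corollary~\ref{cor:dim-bound} only controls $|\lambda|$ by $\dim\Gr_r(E)=r(d-r)$, not $\lambda_1$). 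Your Weyl-integration/dual-Cauchy argument cleanly supplies that missing step, by writing $c_\lambda$ as the constant term of $m_\lambda(\alpha)\prod_i(1+\alpha_i^{-1})^{d-r}\lvert\Delta(\alpha)\rvert^2$ and counting exponents. Worth noting, your count actually gives the slightly sharper bound $\lambda_1\le d-1$, i.e.\ $\lambda\subseteq r\times(d-1)$; this is consistent with, and confirmed by, Proposition~\ref{prop:fchar1}, where for $r=1$ only $\sigma_0,\ldots,\sigma_{d-1}$ appear. So the proposal is not only correct but marginally strengthens the statement.
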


In the case $r=1$, we can give a simpler expression for the formal character by computing directly:

\begin{proposition} \label{prop:fchar1}
We have $\Theta_{A/\fa_1}=\sum_{n=0}^{d-1} \binom{d-1}{n} \sigma_n$.
\end{proposition}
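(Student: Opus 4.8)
The plan is to compute $\Theta_{A/\fa_1}$ directly from the decomposition of $A/\fa_1$ as an object of $\cV$, and then match the answer against the right-hand side by a binomial identity. First I would identify the multiplicity spaces of $A/\fa_1$. By the Cauchy decomposition, $\bA(E)$ decomposes as $\bigoplus_{\lambda} \bS_{\lambda}(\bC^{\infty}) \otimes \bS_{\lambda}(E)$, and the determinantal ideal $\fa_1$, which is generated by $\lw^2(\bC^{\infty}) \otimes \lw^2(E)$, is precisely the sum of the isotypic pieces $\bS_{\lambda}(\bC^{\infty}) \otimes \bS_{\lambda}(E)$ with $\ell(\lambda) \ge 2$. (This is the classical description of the ideal of $2 \times 2$ minors; in the present setup it also follows from the lemma preceding the proposition, since $\Gr_1(E) = \bP(E)$ and $\rR\pi_*(\bA(\cQ_1)) = \bigoplus_{n \ge 0} \Sym^n(\bC^{\infty}) \otimes \Sym^n(E)$.) Hence
\begin{displaymath}
A/\fa_1 = \bigoplus_{n \ge 0} \bS_{(n)}(\bC^{\infty}) \otimes \bS_{(n)}(E) = \bigoplus_{n \ge 0} \Sym^n(\bC^{\infty}) \otimes \Sym^n(E),
\end{displaymath}
so the only nonzero multiplicity space is $(A/\fa_1)_{(n)} = \Sym^n(E)$, which has dimension $\binom{d+n-1}{n}$. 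By the definition of the formal character this gives $\Theta_{A/\fa_1} = \sum_{n \ge 0} \binom{d+n-1}{n} s_n$.

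It then remains to verify that $\sum_{n \ge 0} \binom{d+n-1}{n} s_n = \sum_{n=0}^{d-1} \binom{d-1}{n} \sigma_n$. Substituting the definition $\sigma_m = \sum_{k \ge m} \binom{k}{m} s_k$ and interchanging the order of summation, the right-hand side becomes $\sum_{k \ge 0} \big( \sum_{m} \binom{d-1}{m} \binom{k}{m} \big) s_k$, and the inner sum equals $\sum_{m} \binom{d-1}{m} \binom{k}{k-m} = \binom{d-1+k}{k}$ by the Chu--Vandermonde identity. Comparing the coefficients of $s_k$ finishes the proof.

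The only step that requires any care is the identification of $\fa_1$ with the span of the $\bS_{\lambda}$-isotypic components indexed by partitions with at least two rows; everything after that is a one-line binomial manipulation, so I do not anticipate further obstacles. If one prefers not to invoke the structure of $\fa_1$ at all, one can instead apply Theorem~\ref{thm:fchar} to the class $[A/\fa_1] = 1 \otimes [\cO_{\bP(E)}]$ from the preceding lemma; for $r=1$ the map $\theta_1$ is easy to evaluate because monomial and Schur symmetric functions coincide in one variable, and one recovers the same answer.
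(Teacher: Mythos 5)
Your proof is correct and follows essentially the same route as the paper: start from the decomposition $A/\fa_1 = \bigoplus_{n \ge 0} \Sym^n(\bC^\infty) \otimes \Sym^n(E)$, read off $\Theta_{A/\fa_1} = \sum_{n\ge 0}\binom{d+n-1}{n}s_n$, and match with the right-hand side via a Vandermonde-type binomial identity (the paper writes the identity in the form $\binom{d+n-1}{d-1}=\sum_{i=0}^{d-1}\binom{d-1}{i}\binom{n}{d-i-1}$ and then collects coefficients of $\sigma_{d-1-i}$, which is the same computation as your interchange of summation).
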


\begin{proof}
We have
\begin{displaymath}
A/\fa_1 = \bigoplus_{n \ge 0} \Sym^n(E) \otimes \Sym^n(\bC^{\infty}),
\end{displaymath}
and so
\begin{displaymath}
\Theta_{A/\fa_1} = \sum_{n \ge 0} \binom{d+n-1}{d-1} s_n,
\end{displaymath}
where the binomial coefficient is the dimension of $\Sym^n(E)$. Appealing to the identity
\begin{displaymath}
\binom{d+n-1}{d-1} = \sum_{i=0}^{d-1} \binom{d-1}{i} \binom{n}{d-i-1},
\end{displaymath}
we find
\begin{displaymath}
\Theta_{A/\fa_1} = \sum_{i=0}^{d-1} \binom{d-1}{i} \left[ \sum_{n \ge 0} \binom{n}{d-i-1} s_n \right]
=\sum_{i=0}^{d-1} \binom{d-1}{i} \sigma_{d-1-i},
\end{displaymath}
from which the stated formula follows.
\end{proof}

\subsection{Enhanced Hilbert series of determinantal rings}

Keep the same notation as the previous section. The formula for $\Theta_{A/\fa_r}$ gives a formula for $\wt{\rH}_{A/\fa_r}$ by a change of variables. In this section, we compute $\wt{\rH}_{A/\fa_r}$ directly, without using any of the theory behind Theorem~\ref{thm:fchar}.

First consider the case $r=1$. Then $A/\fa_1=\bigoplus_{n \ge 0} \Sym^n(\bC^\infty)^{\oplus \dim \Sym^n(E)}$, and so the enhanced Hilbert series is
\begin{align} \label{eqn:rank1det}
\sum_\lambda \binom{|\lambda|+d-1}{|\lambda|} \frac{t^\lambda}{\lambda!}.
\end{align}

\begin{lemma} 
\[
\sum_\lambda \binom{|\lambda|+d-1}{|\lambda|} \frac{t^\lambda}{\lambda!} = \left( \frac{\partial^{d-1}}{\partial s^{d-1}} \frac{s^{d-1}}{(d-1)!} \exp(st_1 + s^2 t_2 + s^3 t_3 + \cdots) \right) \bigg|_{s=1}.
\]
\end{lemma}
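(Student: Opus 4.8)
The plan is to read the right-hand side as a single linear operator applied to the generating function $G(s) := \exp(st_1 + s^2 t_2 + s^3 t_3 + \cdots)$, and then to expand $G(s)$ as a power series in the $t_i$ and apply that operator term by term.

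First I would expand $G(s)$. Writing a partition $\lambda$ in terms of its multiplicities $m_i = m_i(\lambda)$, the definition of the exponential gives
\[
G(s) = \prod_{i \ge 1} \exp(s^i t_i) = \prod_{i \ge 1} \sum_{m_i \ge 0} \frac{(s^i t_i)^{m_i}}{m_i!} = \sum_{\lambda} s^{|\lambda|}\, \frac{t^{\lambda}}{\lambda!},
\]
where I used $t^{\lambda} = \prod_i t_i^{m_i}$, $\lambda! = \prod_i m_i!$, and $|\lambda| = \sum_i i\, m_i$. The content of this step is the (routine but notationally fussy) identification of partitions with their multiplicity sequences; once that is in place, $G(s)$ is a well-defined formal power series in the $t_i$ whose coefficient of $t^{\lambda}/\lambda!$ is the polynomial $s^{|\lambda|}$ in the single variable $s$.

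Next I would analyze the operator $\mathcal{O} f := \bigl(\tfrac{\partial^{d-1}}{\partial s^{d-1}} \tfrac{s^{d-1}}{(d-1)!}\, f\bigr)\big|_{s=1}$ acting on polynomials in $s$. A direct computation shows that for all $N \ge 0$,
\[
\mathcal{O}(s^N) = \tfrac{1}{(d-1)!}(N+1)(N+2)\cdots(N+d-1) = \binom{N+d-1}{d-1},
\]
since $\tfrac{s^{d-1}}{(d-1)!}\, s^N = \tfrac{s^{N+d-1}}{(d-1)!}$ and its $(d-1)$-st derivative is $\binom{N+d-1}{d-1}\, s^N$, which specializes to $\binom{N+d-1}{d-1}$ at $s=1$.

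Finally, applying $\mathcal{O}$ coefficient-by-coefficient in the $t_i$ to $G(s)$ — which is legitimate precisely because each such coefficient is an honest polynomial in $s$, so that the differentiations and the evaluation at $s=1$ are meaningful — yields $\mathcal{O}(G(s)) = \sum_{\lambda} \binom{|\lambda|+d-1}{d-1}\, \tfrac{t^{\lambda}}{\lambda!}$. Since $\binom{|\lambda|+d-1}{d-1} = \binom{|\lambda|+d-1}{|\lambda|}$, this is exactly the left-hand side of the lemma, while by construction $\mathcal{O}(G(s))$ is exactly the right-hand side. There is no genuine obstacle here; the only points requiring (minor) care are the bookkeeping between partitions and multiplicity sequences in the expansion of $\exp$, and the justification that the $s$-operator may be applied term by term to the formal power series in the $t_i$.
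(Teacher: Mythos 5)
Your argument is correct and is essentially the paper's own proof, just phrased a bit more abstractly as a linear operator $\mathcal{O}$ acting termwise on the $s$-power-series expansion of $G(s)$; the paper does the same computation by directly reading off the coefficient of $t^{n_1}\cdots t^{n_r}$, multiplying by $s^{d-1}/(d-1)!$, differentiating $d-1$ times, and setting $s=1$.
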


\begin{proof}
The coefficient of $t^{n_1} \cdots t^{n_r}$ in $\exp(st_1 + s^2 t_2 + \cdots)$ is 
\[
\frac{s^{n_1 + 2n_2 + \cdots + rn_r} }{n_1! \cdots n_r!}.
\]
Set $n=n_1 + 2n_2 + \cdots + rn_r$. If we multiply by $\frac{s^{d-1}}{(d-1)!}$, take the $(d-1)$st derivative, and evaluate at $s=1$, we get
\[
\binom{n+d-1}{n} \frac{1}{n_1! \cdots n_r!}.
\]
Finally, given a partition $\lambda$, and setting $n_i = m_i(\lambda)$, we have $n=|\lambda|$.
\end{proof}

Using the product rule for derivatives, this can be written as
\[
\left. \sum_{i+j=d-1} \binom{d-1}{i} \frac{s^j}{j!} \frac{d^j}{ds^j} \exp(st_1 + s^2t_2 + \cdots) \right|_{s=1}.
\]
Note that $T_i = \frac{1}{i!} \frac{d^i}{ds^i} (st_1 + s^2 t_2 + \cdots)|_{s=1}$, so we see that this expression is a polynomial in $T_1, \dots, T_{m-1}$ times $\exp(T_0)$. 

We list the first few derivatives of $\exp(f(s))$ (they are of the form $\exp(f(s)) \cdot F$ so we just list $F$):
\begin{align*}
f'(s)\\
f''(s) + f'(s)^2\\
f^{(3)}(s) + f'(s)^3 + 3f'(s)f''(s)\\
f^{(4)}(s) + 3f''(s)^2 + f'(s)^4 + 4f^{(3)}(s)f'(s) + 6f'(s)^2f''(s)
\end{align*}
In particular, we get the following expressions for $\frac{d^j}{ds^j} \exp(s t_1 + s^2 t_2 + \cdots)|_{s=1}$ (they are of the form $\exp(T_0) \cdot F$ so we just list $F$):
\begin{align*}
T_1\\
2T_2 + T_1^2\\
6T_3 + T_1^3 + 6T_1T_2\\
24T_4 + 12T_2^2 + T_1^4 + 24T_3T_1 + 12T_1^2T_2
\end{align*}
So we have the following evaluations of \eqref{eqn:rank1det} for small $d$:
{\small
\begin{align*}
\begin{array}{l|l}
d & \eqref{eqn:rank1det} \\[2pt] \hline
1 & \exp(T_0)  \rule{0pt}{12pt} \\[2pt]
2 & (T_1 + 1) \exp(T_0)\\[2pt]
3 & (\frac{1}{2} (2T_2 + T_1^2) + 2T_1 + 1) \exp(T_0)\\[2pt]
4 & (\frac{1}{6} (6T_3 + T_1^3 + 6T_1T_2) + \frac{3}{2} (2T_2 + T_1^2) + 3T_1 + 1) \exp(T_0)\\[2pt]
5 & (\frac{1}{24}(24T_4 + 12T_2^2 + T_1^4 + 24T_3T_1 + 12T_1^2T_2) +  \frac{2}{3} (6T_3 + T_1^3 + 6T_1T_2) + 3 (2T_2 + T_1^2) + 4T_1 + 1) \exp(T_0)
\end{array}
\end{align*}
}
One can verify that these quantities correspond to those in Proposition~\ref{prop:fchar1} under the appropriate change of variables. 

Using \cite[Theorem 16]{gessel}, we can give a determinantal formula for the enhanced Hilbert series of $A/\fa_r$, in general, in terms of certain modifications of \eqref{eqn:rank1det}. Define, for any $i \in \bZ$,
\[
a_i = \sum_{\lambda,\ |\lambda| \ge -i} \binom{|\lambda|+i+d-1}{|\lambda|+i} \frac{t^\lambda}{\lambda!}.
\]
Then $a_0$ is the same as \eqref{eqn:rank1det}, and the enhanced Hilbert series of $A/\fa_r$ is the determinant
\[
\det(a_{j-i})_{i,j=1}^r.
\]
From this formula, we deduce that the enhanced Hilbert series is a polynomial in $T_1, \dots, T_{d-1}$ times $\exp(rT_0)$. This gives an independent verification of the rationality theorem in this case.

\subsection{Hilbert series of invariant rings} \label{ss:invariants}

Let $E$ be a finite dimensional representation of a reductive group $G$ and let $A$ be the tca $\bA(E)^G$. Using the symmetric group model for $\cV$, $A$ is given by $A_n=(E^{\otimes n})^G$. Since $\bA(E)$ is bounded, so is $A$, and so Theorem~\ref{thm:dfin} assures us that $\rH_A$ is D-finite.

For the purposes of this section, we let $\rH_A^*(t)=\sum_{n \ge 0} \dim(A_n) t^n$. This is the non-exponential form of $\rH_A(t)$. The D-finiteness of $\rH_A$ is equivalent to the D-finiteness of $\rH_A^*$: each of the series $\sum_n t^n/n!$ and $\sum_n n! t^n$ is D-finite, and D-finiteness is preserved under Hadamard product by \cite[Remark 2]{lipshitz} (see also \cite[Theorem 6.4.12]{stanley} for a simpler reason for univariate series).

\begin{example}
Take $G=\SL(2)$ and $E=\bC^2$. Then the dimension of $(E^{\otimes n})^G$ is 0 for $n$ odd and the Catalan number $C_{n/2}$ for $n$ even. (Recall that $C_k=\frac{1}{k+1} \binom{2k}{k}$.)  We thus obtain
\begin{displaymath}
\rH^*_A(t)=\sum_{k \ge 0} C_k t^{2k}=\frac{2}{1+\sqrt{1-4t^2}}.
\end{displaymath}
We note that any algebraic function is D-finite. We have
\begin{displaymath}
\rH_A(t)=\sum_{k \ge 0} \frac{(2k)!}{k! (k+1)!} t^{2k}.
\end{displaymath}
This is (essentially) a modified Bessel function of the first kind, and satisfies the differential equation
\begin{displaymath}
t^2 \frac{d^2y}{dt^2}+3t \frac{dy}{dt}-4t^2y=0. \qedhere
\end{displaymath}
\end{example}

\begin{example} \label{ex:notalg}
Take $G=\SL(2) \times \SL(2)$ and $E=\bC^2 \otimes \bC^2$. Then $(E^{\otimes n})^G$ is the tensor square of the space $((\bC^2)^{\otimes n})^{\SL(2)}$ from the previous example, and so we find
\begin{displaymath}
\rH_A^*(t)=\sum_{k \ge 0} C_k^2 t^{2k}=\sum_{k \ge 0} \frac{1}{(k+1)^2} \binom{2k}{k}^2 t^{2k}.
\end{displaymath}
This series is not algebraic: indeed, if it were then the Hadamard product of it and the rational series $\sum_{k \ge 0} (k+1)^2 t^{2k}$ would also be algebraic \cite[Proposition 6.1.11]{stanley}, but this is the series $\sum_{k \ge 0} \binom{2k}{k}^2 t^{2k}$, which is known to be transcendental \cite[Theorem 3.3]{woodcocksharif}. We thus see that $\rH_A^*(t)$ need not always be algebraic. Thus Theorem~\ref{thm:dfin} is optimal, in a sense.
\end{example}

\begin{remark}
We point out that the D-finiteness of $\rH^*_A(t)$ can be seen directly in this case. Let $T$ be a maximal torus of $G$, and let $\alpha_1, \ldots, \alpha_r$ be characters of $T$ giving an isomorphism with $\bG_m^r$. Let $\chi$ be the character of $E$, regarded as a function on $T$ (and thus a Laurent polynomial in the $\alpha_i$'s). Let $F(t,\alpha)=\sum_{n \ge 0} t^n \chi^n = (1-t \chi)^{-1}$. This is a rational function of $t$ and the $\alpha$'s. By the Weyl integration formula (see \S \ref{ss:integration} for the $\GL(n)$ case and \cite[\S 26.2]{FH} for the general case), we have
\begin{displaymath}
\rH_A^*(t) = \frac{1}{|W|}\int_T F(t,\alpha) \vert \Delta \vert^2 d\alpha,
\end{displaymath}
where $W$ is the Weyl group of $G$ and $\Delta$ is a certain Laurent polynomial in the $\alpha_i$. The D-finiteness of $\rH_A^*(t)$ now follows from the following general fact: taking the constant term (with respect to the $\alpha_i$) of the the product of a D-finite function with a Laurent polynomial is again D-finite.
\end{remark}

\begin{remark}
There are some similar situations where one does not have D-finiteness. For $k \ge n$, the canonical map
\begin{displaymath}
((\bC^k)^{\otimes n})_{S_k} \to ((\bC^{k+1})^{\otimes n})_{S_{k+1}}
\end{displaymath}
is an isomorphism. Let $A_n$ be the stable value of this vector space (precisely, we could define it as the direct limit). The dimension of $A_n$ is the Bell number $B_n$. These numbers do not have a D-finite generating series: one has $\sum_{n \ge 0} B_n \tfrac{t^n}{n!}=e^{e^t-1}$.

This example can be made to look more like the previous ones by using the category $\Rep(\fS)$ of algebraic representations of the infinite symmetric group $\fS$, as studied in \cite{infrank}. Let $E=\bC^{\infty}$ be the permutation representation of $\fS$. Then the sequence $(A_n)_{n \ge 0}$ is identified with $\bA(E)_{\fS}$. Thus the conclusion of Theorem~\ref{thm:dfin} does not apply to $\bA(E)_{\fS}$.

We thank a referee for suggesting this example.
\end{remark}

\subsection{Generalizing character polynomials} \label{ss:gencp}

Let $E$ be a vector space of dimension $m$, and let $M$ be a finitely generated module over $\bA(E)$. Theorem~\ref{thm:enhanced-rational} tells us that the enhanced Hilbert series can be written in the form
\[
\wt{\rH}_M(t) = \sum_{r=0}^d p_i(t,T) e^{rT_0}
\]
where $p_i(t,T)$ is a polynomial in $t_1, t_2, \ldots$ and $T_1, T_2, \ldots$. In fact, using Corollary~\ref{cor:dim-bound}, we can be more precise: if we set $\deg(T_d) = d$ and $\deg(t_d)=0$, then each polynomial $p_i$ has degree $\le i(m-i)$.

We define the $k$th umbral substitution to be the linear map
\[
\bQ[t_i] \to \bQ[a_i], \qquad \prod_i t_i^{d_i} \mapsto \prod_i k^{-d_i} (a_i)_{d_i},
\]
where $(x)_d = x(x-1) \cdots (x-d+1)$. This extends to a linear map on polynomial expressions in the $t_i$ and $T_i$, and we denote this operation by $p \mapsto \downarrow_k p$. When $k=1$, this was denoted $\downarrow$ in \cite[\S 5.2]{symc1}.

\begin{proposition}
Given $M$, there exist polynomials $p_1,\dots,p_d$ in $t_1,t_2,\dots,T_1,T_2,\dots$ such that $\deg(p_i) \le i(m-i)$, and
\[
\tr(c_\lambda \vert M) = \sum_{i=1}^d i^{\ell(\lambda)} \downarrow_i p_i|_{t_k = m_k(\lambda)}
\]
when $|\lambda| \gg 0$.
\end{proposition}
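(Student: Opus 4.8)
The plan is to read the trace $\tr(c_\lambda \mid M)$ off the enhanced Hilbert series. First I would record the elementary identity
\[
\tr(c_\lambda \mid M) = \lambda! \cdot [t^\lambda]\, \wt{\rH}_M(t),
\]
where $[t^\lambda]$ denotes the coefficient of the monomial $t^\lambda = \prod_k t_k^{m_k(\lambda)}$: this is immediate from $\wt{\rH}_M(t) = \sum_\mu [M_\mu]\, X_\mu(t)$ and the fact that the coefficient of $t^\lambda$ in $X_\mu(t)$ is $\tr(c_\lambda \mid \bM_\mu)/\lambda!$ when $|\mu| = |\lambda|$ and $0$ otherwise. Next, by Theorem~\ref{thm:enhanced-rational} and the refinement of it recorded above (via Corollary~\ref{cor:dim-bound} and $\dim \Gr_i(E) = i(m-i)$), I may fix a decomposition $\wt{\rH}_M(t) = \sum_{i=0}^d p_i(t,T)\, e^{iT_0}$ in which $p_i$ is a polynomial in the $t_k$ and $T_k$ with $\deg p_i \le i(m-i)$ for the grading $\deg T_k = k$, $\deg t_k = 0$. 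This is already the degree bound asserted in the proposition, so there is nothing more to prove on that score; note also that $\deg p_0 \le 0$ forces $p_0$ to contain no $T_k$'s, i.e.\ to be an ordinary polynomial in the $t_k$'s.

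The core of the argument is a coefficient computation. Using $T_0 = \sum_{k \ge 1} t_k$, write $e^{iT_0} = \prod_{k \ge 1} e^{i t_k}$, and expand $p_i$ (after rewriting it in the $t_k$-variables) as a bounded-degree linear combination $\sum_\beta c^{(i)}_\beta t^\beta$ of monomials. Then
\[
\lambda! \cdot [t^\lambda]\bigl(t^\beta e^{iT_0}\bigr) = \prod_k \frac{m_k(\lambda)!}{(m_k(\lambda)-\beta_k)!}\, i^{\,m_k(\lambda)-\beta_k} = i^{\ell(\lambda)} \prod_k i^{-\beta_k}\, (m_k(\lambda))_{\beta_k},
\]
where $(x)_d = x(x-1)\cdots(x-d+1)$ and $\ell(\lambda) = \sum_k m_k(\lambda)$; the right-hand side is precisely $i^{\ell(\lambda)}\,\downarrow_i(t^\beta)\big|_{t_k = m_k(\lambda)}$. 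Since $(m_k(\lambda))_{\beta_k} = 0$ whenever $\beta_k > m_k(\lambda)$, this identity holds for every $\beta$ (the falling factorials automatically enforce the support condition $\beta \le \lambda$ that the left-hand side demands), and for a fixed $\lambda$ only finitely many $\beta$ contribute, so $\downarrow_i p_i\big|_{t_k = m_k(\lambda)}$ is a well-defined finite sum. Summing over $\beta$ and then over $i$, and invoking the identity from the first paragraph, I obtain
\[
\tr(c_\lambda \mid M) = \sum_{i=0}^d i^{\ell(\lambda)}\,\downarrow_i p_i \big|_{t_k = m_k(\lambda)}.
\]

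Finally I would remove the $i = 0$ term. Since $p_0$ is an ordinary polynomial in finitely many $t_k$'s, it is a linear combination of finitely many monomials $t^\beta$, so $[t^\lambda] p_0 = 0$ unless $t^\lambda$ is one of them, and this cannot happen once $|\lambda|$ exceeds the finitely many bounds on the parts and multiplicities occurring in $p_0$. Hence for $|\lambda| \gg 0$ the $i = 0$ contribution drops out and the sum runs from $1$ to $d$, which is the claimed formula; this is also why $i = 0$ cannot be retained in the stated shape, as $\downarrow_0$ would involve dividing by zero. I do not anticipate a real obstacle here: the only delicate points are the multi-index bookkeeping in the central computation and making explicit that $\downarrow_i$ is applied to $p_i$ only after it has been rewritten in the $t_k$-variables. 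In fact the argument shows that the $i \ge 1$ part of the identity is valid for all $\lambda$, the hypothesis $|\lambda| \gg 0$ serving only to absorb the finitely many exceptional contributions coming from $p_0$.
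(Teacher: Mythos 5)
Your argument is correct and matches the paper's proof in all essentials: both read $\tr(c_\lambda\mid M)$ off as $\lambda!\,[t^\lambda]\wt{\rH}_M(t)$, invoke the refined form of Theorem~\ref{thm:enhanced-rational}, and reduce to a coefficient computation, disposing of the $i=0$ term by the weighted degree bound on $p_0$. Your choice to re-expand $p_i$ into $t$-monomials before applying $\downarrow_i$ is a minor streamlining of the paper's bookkeeping over mixed $t,T$-monomials (which there requires a sum over compositions), and it has the virtue of making transparent both that $\downarrow_i p_i\big|_{a_k=m_k(\lambda)}$ is a finite sum and that this is the natural extension of $\downarrow_i$ to $T$-polynomials.
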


\begin{proof}
Consider an expression $t_1^{\alpha_1} \cdots t_r^{\alpha_r} T_1^{\beta_1} \cdots T_s^{\beta_s} \exp(i T_0)$. The coefficient of $t^\lambda / \lambda!$ is 
\[
\sum_{v_1, \dots, v_s \in \bZ_{\ge 0}^{r}} i^{\ell(\lambda) - |\alpha| - |\beta|} \frac{\lambda!}{(\lambda-\alpha-\beta)!} \prod_{i=1}^s \prod_{j=1}^r \binom{v_{i,j}}{i}
\]
where the sum is over all $(v_1,\dots,v_s)$ such that $|v_i|=\beta_i$, and we define $|\alpha|=\alpha_1 + \cdots + \alpha_r$, $|\beta| = \beta_1 + \cdots + \beta_s$, and $(\lambda-\alpha-\beta)! = \prod_{i=1}^r (m_i(\lambda) - \alpha_i - v_{1,i} - \cdots - v_{s,i})!$ when the arguments are nonzero, and otherwise, the term does not appear. This is the same as $\downarrow_i (t_1^{\alpha_1} \cdots t_r^{\alpha_r} T_1^{\beta_1} \cdots T_s^{\beta_s})$, and hence the result follows from the form of $\wt{\rH}_M(t)$ above if we only consider $|\lambda|$ larger than $\deg(p_0(t,T))$ where we define $\deg(t_i) = i$.
\end{proof}

When $d=1$, the $T_1,T_2,\dots$ do not show up, and $\downarrow_1 p_1$ is a polynomial. This is the character polynomial discussed in \cite[\S 5.2]{symc1} and \cite[Theorem~1.5]{fimodule}. In the general case, we need to use a formal series.


\begin{thebibliography}{[CEF]}

\bibitem[CEF]{fimodule}
Thomas Church, Jordan~S. Ellenberg, Benson Farb, FI-modules: a new approach to stability for $S_n$-representations, {\it Duke Math. J.} {\bf 164}, no.~9 (2015), 1833--1910, \arxiv{1204.4533v4}.

\bibitem[Fu]{fulton} William Fulton, {\it Intersection Theory}, second edition, Springer-Verlag, Berlin, 1998. 

\bibitem[FH]{FH}
William Fulton, Joe Harris, \emph{Representation Theory: A First Course}, Graduate Texts in Mathematics, {\bf 129}, Springer--Verlag, New York, 1991.

\bibitem[FL]{fulton-lang} William Fulton, Serge Lang, {\it Riemann-Roch Algebra}, Grundlehren der Mathematischen Wissenschaften [Fundamental Principles of Mathematical Sciences] {\bf 277}, Springer-Verlag, New York, 1985.

\bibitem[Ges]{gessel}
Ira~M. Gessel, 
Symmetric functions and P-recursiveness,
{\it J. Combin. Theory Ser. A} {\bf 53} (1990), no.~2, 257--285. 

\bibitem[Lip]{lipshitz} L.~Lipshitz, 
The diagonal of a $D$-finite power series is $D$-finite,
{\it J. Algebra} {\bf 113} (1988), no.~2, 373--378. 

\bibitem[Mac]{macdonald} I. G. Macdonald, {\it Symmetric Functions and Hall Polynomials}, second edition, Oxford Mathematical Monographs, Oxford, 1995.

\bibitem[MS]{miller-sturmfels}
Ezra Miller, Bernd Sturmfels, 
{\it Combinatorial Commutative Algebra},
Graduate Texts in Mathematics {\bf 227}, Springer-Verlag, New York, 2005.

\bibitem[SS1]{symc1}
Steven~V Sam, Andrew Snowden, GL-equivariant modules over polynomial rings in infinitely many variables, 
{\it Trans. Amer. Math. Soc.} {\bf 368} (2016), 1097--1158, \arxiv{1206.2233v3}.

\bibitem[SS2]{expos}
Steven~V Sam, Andrew Snowden, Introduction to twisted commutative algebras, \arxiv{1209.5122v1}.

\bibitem[SS3]{infrank}
Steven~V Sam, Andrew Snowden, Stability patterns in representation theory, 
{\it Forum Math. Sigma} {\bf 3} (2015), e11, 108 pp., \arxiv{1302.5859v2}.

\bibitem[SS4]{catgb}
Steven~V Sam, Andrew Snowden, Gr\"obner methods for representations of combinatorial categories, {\it J. Amer. Math. Soc.} {\bf 30} (2017), 159--203, \arxiv{1409.1670v3}.

\bibitem[SS5]{symu1}
Steven~V Sam, Andrew Snowden, GL-equivariant modules over polynomial rings in infinitely many variables II, \arxiv{1703.04516v1}.

\bibitem[Sno]{delta}
Andrew Snowden, Syzygies of Segre embeddings and $\Delta$-modules, {\it Duke Math.\ J.} {\bf 162} (2013), no.~2, 225--277, \arxiv{1006.5248v4}.

\bibitem[Sta]{stanley}
Richard~P. Stanley, {\it Enumerative Combinatorics}, Vol. 2, with a foreword by Gian-Carlo Rota and appendix 1 by Sergey Fomin, Cambridge Studies in Advanced Mathematics {\bf 62}, Cambridge University Press, Cambridge, 1999.

\bibitem[WS]{woodcocksharif}
Christopher F.~Woodcock, Habib Sharif, On the transcendence of certain series, {\it J. Algebra} {\bf 121} (1989), no.~2, 364--369.

\end{thebibliography}
\end{document}